\documentclass[11pt, english, reqno]{amsart}

\usepackage[T1]{fontenc}
\usepackage{babel}
\usepackage{mathrsfs}
\usepackage{amsthm}
\makeindex

\usepackage[dvips,top=3.8cm,left=4cm,right=3.8cm,
foot=3.8cm,bottom=4.0cm]{geometry}
\usepackage{amsfonts,amssymb,amsmath,amsthm, booktabs, 
latexsym}
\usepackage{centernot}
\usepackage{tikz-cd}
\usepackage{bbm}

\usepackage{enumerate}
\usepackage{color}

\newtheorem{theorem}{Theorem}[section]
\newtheorem{lemma}[theorem]{Lemma}
\newtheorem{proposition}[theorem]{Proposition}
\newtheorem{corollary}[theorem]{Corollary}
\theoremstyle{definition}
\newtheorem{definition}[theorem]{Definition}

\theoremstyle{remark}
\newtheorem{remark}[theorem]{Remark}

\numberwithin{equation}{section}
%\numberwithin{figure}{section}

\numberwithin{equation}{section}
\setcounter{tocdepth}{2}

\newcommand{\be}{\begin{equation}}
\newcommand{\ee}{\end{equation}}
\newcommand{\ba}{\begin{aligned}}
\newcommand{\ea}{\end{aligned}}

\newcommand{\N}{{\mathbb N}}

\newcommand{\R}{{\mathbb R}}

\newcommand{\h}{{\mathcal H}}
\def\va{\varphi}
\def\la{\lambda}

\def\csi1{\circ\sigma^{-1}}

\def\ol{\overline}
\def\wt{\widetilde}
\def\wh{\widehat}

\def\mc{\mathcal}

\def\lc2{L^2_{\mathrm{loc}}(\mu)}
\newcommand{\B}{{\mathcal B}}
\newcommand{\Bfin}{\B_{\mathrm{fin}}}
\newcommand{\Dfin}{\mc D_{\mathrm{fin}}}
\newcommand{\Lloc}{L^1_{\mathrm{loc}}(\mu)}
\newcommand{\sms}{(V, \mathcal B, \mu)}
\newcommand{\VB}{(V, \mathcal B)}
\newcommand{\vv}{(V\times V, \B\times \B)}
\newcommand{\FVB}{{\mathcal F(V, \B)}}

\newcommand{\VtV}{V\times V}

\setcounter{topnumber}{0}
\setcounter{bottomnumber}{0}
\setcounter{totalnumber}{20}

\begin{document}

\title[Symmetric measures]{Markov operators generated by symmetric
 measures}

\author{Sergey Bezuglyi}
%    Address of record for the research reported here
\address{Department of Mathematics, University of Iowa, Iowa City,
52242 IA, USA}
%    Current address
%\curraddr{}
\email{sergii-bezuglyi@uiowa.edu}
\email{palle-jorgensen@uiowa.edu}

\author{Palle E.T. Jorgensen}
%\address{Department of Mathematics, University of Iowa, Iowa City,
%52242 IA, USA}

\subjclass[2010]{37L40, 60J20}

%\dedicatory{To the memory of Ola Bratteli}

\keywords{Markov operator, standard measure space, symmetric measure,
 Laplace operator, Markov chain,  harmonic function, finite energy space}

\begin{abstract} 
With view to applications, we here give an explicit correspondence
between the following two: (i) the set of symmetric and positive
measures $\rho$ on one hand, and (ii) a certain family of generalized
Markov transition measures $P$, with their associated Markov random 
walk models, on the other. By a generalized Markov transition measure we
mean a measurable and measure-valued function $P$ on $(V, \B)$, such 
that for every $x \in V , P(x;  \cdot)$ is a probability measure on 
$(V, \B$). Hence, with the use of our correspondence (i) - (ii), we study 
generalized Markov transitions $P$ and path-space dynamics. Given $P$,
 we introduce an associated operator, also denoted by $P$ , and we 
 analyze its spectral theoretic properties with reference to a system of 
 precise $L^2$ spaces.

Our setting is more general than that of earlier treatments of reversible 
Markov processes.
In a potential theoretic analysis of our processes, we introduce and study 
an associated energy Hilbert space $\h_E$, not directly linked to the initial 
$L^2$-spaces. Its properties are subtle, and our applications include a 
study of the $P$-harmonic functions. They may be in $\h_E$, called 
finite-energy harmonic functions. A second reason for $\h_E$ is that it 
plays a key role in our introduction of a generalized Greens function. (The 
latter stands in relation to our present measure theoretic Laplace operator 
in a way that parallels more traditional settings of Greens functions from 
classical potential theory.)
A third reason for $\h_E$  is its use in our analysis of path-space dynamics 
for generalized Markov transition systems.
\end{abstract}

\maketitle

\tableofcontents
\section{Introduction}\label{sect Intro}

In this paper, we continue our study of the graph Laplace and Markov
 operators, initiated in \cite{BezuglyiJorgensen2018}, 
which was based on the key notion of a $\sigma$-finite symmetric
 measure defined on the product space $\vv$ for a standard Borel 
 space $\VB$.  

Our goal is to extend the basic definitions and results of the theory of
weighted networks (known also as electrical or resistance networks)  to 
the case of measure spaces. We briefly recall that,  for a countable 
locally finite connected graph $G = (V, E)$
without loops, one can identify the edge set $E$ with a subset of the 
Cartesian product $\VtV$ and assign some weight $c_{xy}$ for every 
point $(x,y)$ in $E$ where $c_{xy}$ is a symmetric positive function. 
It gives us a symmetric atomic measure $\rho$ on $E$ whose projections
on $V$ are the counting measure $\mu$. Then, for a weighted network 
$(V, E, c)$, one defines the Markov transition probability kernel $P$ and
the graph Laplacian $\Delta = c(I -P)$ which are considered as operators 
acting either in $L^2$ spaces with respect to the measures $\mu$ and 
$\nu= c\mu$ or in the finite energy space $\h_E$. Their spectral 
properties are of great interest as well as the study of harmonic functions 
in the theory of weighted networks. 

Our approach to the measurable theory of weighted networks is based 
on the concept of a \textit{symmetric measure} defined on the 
Cartesian product $\vv$ where $(V, \B)$ is a standard Borel space.
(To stress the existing parallels we use the same notation as in discrete
case.) 
 In more detail, in the context of measurable dynamics,
the state space $V$ is considered very generally; more specifically 
$(V, \B)$ is given, where $\B$ is a specified $\sigma$-algebra for $V$.
 From $(V, \B)$, we 
 then form the corresponding product space, relative to the product 
 $\sigma$-algebra on $V \times V$. It is important that our initial measure 
 $\rho$  is not assumed finite, but only $\sigma$-finite. Since $\rho$  is 
 assumed symmetric, the respective two marginal measures coincide, here 
 denoted  $\mu$,  and they will also not be finite; only $\sigma$-finite. 
 The $\sigma$-finiteness will be a crucial fact in our computations of 
 a number  of Radon-Nikodym derivatives and norms of operators and 
 vectors.
   
We establish an explicit correspondence between (i) symmetric and 
positive measures  $\rho$  on one hand, and (ii) a certain set of 
generalized Markov transition measures $P$ on the other. More precisely,
 by a generalized Markov transition measure we mean a measurable and 
 measure-valued function $P$ on $(V, \B)$, such that for every $x$
in $V$, $P(x, \cdot)$ is a probability measure on $(V, \B)$. From the
 generalized Markov transition $P$, we introduce an associated operator, 
 also denoted by $P$. Its spectral theoretic properties refer to a certain
  $L^2$ space, and they will be made precise in Section 
\ref{subsect_Operators}.  
  
In addition to the operator $P$, we shall also consider a natural transfer 
operator $R$  (the choice of the letter ``$R$'' is for David Ruelle who 
initiated a variant of our analysis in the context of statistical mechanics);  
and a measure theoretic Laplacian, or Laplace operator. In the special case 
when $V$ is countably discrete, our Laplace operator will be analogous to 
a family of more standard discretized classical Laplace operators.
For related results on transfer operators, see e.g. 
\cite{AlpayJorgensenLewkowicz2018, Baggett-et-al2009, 
Baggett-et-al2012, BrattelliJorgensen1999, 
Cioletti_et_al2017,  DutkayJorgensen2014, JiangYe2018, 
 JorgensenTian_2017, Jorgensen2001, 
JorgensenPedersen1998, Ruelle1989, Ruelle1992}.

\textbf{New results.} 
It is important to note that our setting is not restricted to the case of 
finite measures. In fact, in our discussion of Markov transition dynamics, 
important examples simply will not allow finite covariant measures. We 
recall that the theory of weighted networks can serve as a discrete 
analog of our measurable settings, see 
\cite{BezuglyiJorgensen2018} where this analogy was discussed in
detail. The corresponding  symmetric measure on the edge set $E$ is 
$\sigma$-finite as well as the counting measure $\mu$ on the set 
of vertices $V$. Our definitions of the energy space $\h_E$, Markov 
operator $P$,  and 
the graph Laplace operator $\Delta$ are direct translations of the 
corresponding definitions for weighted networks.

 To the best of our
knowledge, such interpretations of these objects   have  not been 
considered earlier. We stress that our approach to Markov processes 
generated by $\sigma$-finite symmetric measures leads with 
necessity to the study of Markov transition operators defined on
infinite $\sigma$-finite measure spaces. The existing literature on 
Markov processes is devoted mostly to the case of probability measure 
spaces, see, e.g., \cite{LyonsPeres2016, Nummelin1984, Revuz1984}. 

The notion of Borel equivalence relation defined on a standard
Borel space illustrates our setting, and it can be viewed as a rich source
of various examples.  We refer to  the following books and articles:
\cite{ConleyMiller2016, ConleyMiller2017, CornfeldFominSinai1982, 
DoKuchmentOng2017, FeldmanMooreI_1977,  Kanovei2008, 
Kechris1995,  Lehn1977}. 

More applications of measurable setting for the study of Markov 
processes and Laplacians are given in \cite{BezuglyiJorgensen2018}.
We mention here the theory of graphons, Dirichlet forms, and the
theory of determinantal measures. 

With our starting point, a choice of a fixed symmetric and positive 
measure  $\rho$  on a product space, we will then have four natural 
Hilbert spaces, three are just $L^2$ spaces, $L^2(\rho)$, and two $L^2$ 
spaces referring to the marginal measure $\mu$. The fourth Hilbert space 
is different. We call it the finite energy Hilbert space $\h_E$. Its use is 
motivated by potential theory, and it has a more subtle structure 
among the considered Hilbert 
spaces. Given $\rho$, we introduce an associated energy Hilbert space, 
denoted $\h_E$, but depending on the initially given $\rho$. This energy 
Hilbert space $\h_E$ is not directly linked to the initial $L^2$ spaces, and 
its properties are quite different. Nonetheless, the energy Hilbert space 
$\h_E$ will 
play a key role in our analysis in the main body of our paper. There are 
many reasons for this. For example, non-constant harmonic functions will 
not be in $L^2$; but, in important applications, they may be in $\h_E$; 
we refer to the latter as finite energy harmonic functions. A second 
reason for $\h_E$ is that it plays a crucial role in our introduction of a 
generalized 
Green's function. The latter stands in relation to our Laplace operator in a 
way that is parallel to more classical settings of Green's functions from 
potential theory. A third reason for $\h_E$ is its use in our analysis of 
path-space dynamics for the Markov transition system, mentioned above.

\textbf{Organization.}
Our \textit{main results} are proved in Theorems  \ref{prop embedd J},
\ref{lem_ P eq P'}, \ref{thm energy rho and rho'}, \ref{prop_reversible},
\ref{prop from A to B}, \ref{prop on G_A}, and \ref{prop connectedness}.

The paper is organized as follows. Section \ref{sect Prelim} contains our
 basic definitions and preliminary results. We discuss here the concepts
 of standard Borel and standard measure spaces, kernels,
irreducible symmetric measures, and disintegration. The transfer operator 
$R$, Markov operator $P$, and graph Laplacian $\Delta$ are defined
in Section \ref{subsect_Operators}. We collected a number of results 
 about the spectral properties of these operators that were 
 proved in \cite{BezuglyiJorgensen2018}. Also the reader will find 
 the definition of the finite energy Hilbert space $\h_E$, several 
 results about the structure of the space $\h_E$ and the norm of 
 functions from $\h_E$. We 
 consider also the embedding operator $J$ and prove that $J$ is an
 isometry. In Section \ref{sect equivalence}, we consider the 
 equivalence of Markov operators and the Laplacians 
 generated by equivalent symmetric measures $\rho$ and $\rho'$. It
 turns out that, for equivalent symmetric measures $\rho$ and $\rho'$, 
 there exists an isometry for the corresponding energy Hilbert spaces 
 $\h_E(\rho)$ and $\h_E(\rho')$. The notion of reversible Markov
 processes is discussed in Section \ref{sect Transient}. We relate various
 properties of  the operator $P$ (such as self-ajointness) to this notion 
 and to the notion of a symmetric measure. A number of results 
 about Markov operators acting in the $L^2$ spaces and energy space
 $\h_E$  are  proved in this section. Section \ref{subsect Path space MP}
focuses on the case of a transient Markov processes defined
by a Markov operator $P$. We define the path-space measure 
$\mathbb P$ and Green's function $G(x, A)$, and  we discuss their 
properties. Section \ref{sect discretization} is devoted to construction
of a sequence of discrete weighted networks which can be used
to approximate  the objects considered for the measurable setting.

\section{Basic definitions and symmetric measures}\label{sect Prelim}

In this section, we briefly describe our main setting and introduce 
the most important notation. We also recall several results 
from  \cite{BezuglyiJorgensen2018} which will be used here. 

\subsection{Standard Borel and measure spaces}
Suppose $V$ is a \textit{Polish space}, i.e., $V$ is a separable completely
 metrizable topological space. Let $\B$ denote the $\sigma$-algebra of
 Borel sets   generated by open  sets of $V$. Then $(V, \B)$ is called a 
\textit{standard Borel space}. The theory of standard Borel spaces is 
discussed in many  recent books,  see e.g., 
 \cite{Gao2009, Kanovei2008, Kechris1995, Kechris2010} and papers
 \cite{Chersi1989, Loeb1975}. We recall that 
all uncountable  standard Borel spaces are Borel isomorphic, so that one
 can use any convenient  realization of the space $V$ working
 in the category of measurable spaces. If $\mu$ is a
  continuous (i.e., non-atomic) positive  Borel  measure  on  $(V, 
 \mathcal B)$, then  $(V, \mathcal B, \mu)$ is called a \emph{standard 
 measure space}. Given $\sms$, we will call $\mu$ a measure for brevity. 
 As a rule, we will deal  with non-atomic  $\sigma$-finite positive 
 measures on $(V, \B)$ (unless the opposite is clearly indicated) which 
 take values in the extended real line $\ol \R$. 
 We  use the name of \textit{standard measure space} for both finite and 
 $\sigma$-finite measure spaces. 
Also the same notation, $\B$, is applied for the $\sigma$-algebras of 
Borel sets and measurable  sets of a standard measure space. It 
should be clear from the context what $\sigma$-algebra is considered.  
Working with a measure space $\sms$, we always assume that  $\B$ is 
 \textit{complete}  with respect  to $\mu$. By $\mathcal F(V, \B)$. 
 we denote  the space of real-valued bounded Borel functions on $(V, \B)$.
For  $f \in  \mathcal F(V, \B)$ and a Borel measure $\mu$ on $(V, \B)$, 
 we write 
 $$
 \mu(f) = \int_V f\; d\mu.
 $$

All objects, considered in the context of measure spaces (such as sets,
functions, transformations, etc), are determined  by modulo sets 
of zero measure.  In most cases, we will  
implicitly use this mod 0 convention not mentioning the sets of 
zero measure explicitly. 

In what follows, we will use  (in most cases implicitly)  the notion of
 \textit{measurable fields}. Given 
a measure space $\sms$, it is said that $x \mapsto A_x \in \B$ is a
 \textit{measurable field of sets} if the set 
 $$
 \bigcup_{x\in V} \{x\} \times A_x \in \B \times \B.
 $$
Similarly, one can define a \textit{measurable field of measures} 
$x \to \mu_x$ on 
$\VB$ requiring  $x \mapsto \mu_x(A)$ to be a measurable function for
any $A \in \B$.
 
Consider a $\sigma$-finite continuous measure $\mu$ on a standard
Borel space  $(V, \B)$.  We denote by 
\be\label{eq Bfin}
\Bfin = \B_{\mathrm{fin}}(\mu)  = \{ A \in \B : \mu(A) < \infty\}
\ee
the algebra of Borel sets of finite measure $\mu$. Clearly, any 
$\sigma$-finite  measure 
$\mu$ is uniquely determined by its values on $\Bfin(\mu)$. 

The linear space of simple function over sets from $\Bfin(\mu)$ is denoted
by
\be\label{eq Dfin}
\ba
\mathcal D_{\mathrm{fin}}(\mu) := & 
\left\{ \sum_{i\in I} a_i \chi_{A_i}  :
 A_i \in \Bfin(\mu),\ a_i \in \mathbb R,\ | I | <\infty\right\}\\
  = &\  \mbox{Span}\{\chi_A : A \in \Bfin(\mu)\}, 
  \ea
\ee
will play an important role in our work since simple functions
from  $\Dfin(\mu)$ form a norm  dense subset in  $L^p(\mu)$-space,
$ p \geq 1$.

\subsection{Symmetric measures, kernels, and disintegration} 
\label{subsect m sp symm m}

\begin{definition}\label{def symmetric set}
Let $E$ be an uncountable Borel subset of the Cartesian product 
$(V \times V, \B\times \B)$  such that:

(i)  $(x, y) \in E\  \Longleftrightarrow \ (y, x) \in E$, i.e. $\theta(E) = E$
where $\theta(x, y) = (y,x)$ is the flip automorphism;

(ii) $ E_x := \{y \in V : (x, y) \in E\} \neq \emptyset, \ \   \forall x \in X$;

(iii) for every $x \in V$, 
 $(E_x, \B_x)$ is a standard Borel space where $\B_x$ is 
 the $\sigma$-algebra of Borel sets induced on $E_x$ from $(V, \B)$.
 
We call  $E$ a \textit{symmetric set}. 
\end{definition}

It follows from (ii) and (iii) that the projection of the symmetric set $E$ on
 each margin of the product space $\vv$ is $V$.

We observe that  conditions
(ii) and (iii) are, strictly speaking,  not related to the symmetry property;
 they are  included in Definition \ref{def symmetric set} for convenience,
so that we will not have to make additional assumptions. Condition (iii) 
assumes two cases: the Borel space $E_x$ can be countable or
 uncountable. We focus mostly on uncountable Borel standard spaces.

There are several natural examples of symmetric sets related to dynamical 
systems. We mention here the case of a  \textit{Borel equivalence relation}
$E$ on a standard Borel space $(V, \B)$. By definition,  $E$ is a Borel 
subset  of $V\times V$ such that $(x, x) \in E$ for all $x \in V$, 
$(x, y)$ is in $E$ iff $(y, x) $ is in $E$, and $(x, y) \in E, (y, z) \in E$ 
implies that $(x, z)\in E$. Let $E_x = \{ y \in V : (x, y) \in E\}$, then 
$E$ is partitioned into ``vertical fibers'' $E_x$.  In particular, it can be the
case when every $E_x$ is countable. Then $E$ is called a \textit{countable
Borel equivalence relation. }

\medskip 
We say that a symmetric set $E$ is \textit{decomposable} if there exists
an uncountable Borel  subset $A \subset V$ such that
\be\label{eq decomposable}
E \subset (A \times A) \cup (A^c \times A^c),
\ee
where $A^c = V \setminus A$. 
\medskip

The meaning of this definition can be clarified for Borel equivalence
 relations: if $E$ satisfies \eqref{eq decomposable}, then the set $A$ is 
 \textit{$E$-invariant}.

We recall several definitions and facts about kernels defined on a
 measurable  space, see e.g. \cite{Nummelin1984}, \cite{Revuz1984}.
  Given a standard measure space $(V, \B)$, we define a 
  \textit{$\sigma$-finite kernel} $k$ as a function $k:  
  V \times \B \to \ol \R_+$ (where $\ol\R_+$ is the extended real line)
    such that

(i) $x \mapsto k(x, A)$ is measurable for every $A \in \B$;

(ii) for any $x\in V$, $k(x, \cdot)$ is a $\sigma$-finite measure on 
$(V, \B)$.
\medskip

A kernel $k(x, A)$ is called \textit{finite} if $k(x, \cdot)$ is a finite
 measure on  $(V, \B)$ for every $x$. We will also use the notation 
 $k(x, dy)$ for the measure on $\VB$.

The definition of a finite kernel can be used to define new measures on 
the measurable spaces $(V, \B)$ and $\vv$. 

Given a $\sigma$-finite measure space $\sms$ and a finite kernel 
$k(x, A)$, we set 
$$
\kappa (A) = \int_V k(x, A)\; d\mu(x).
$$
Then $\kappa$ is a $\sigma$-finite measure on $(V, \B)$ (which is also 
called a \textit{random measure} in the literature). Obviously, $\kappa$ is
  absolutely continuous with respect to $\mu$ but not, in general,
   equivalent to $\mu$.
  
For a kernel $k$ as above, one can define inductively the sequence 
of kernels $(k^n : n \geq 1)$ by setting
\be\label{eq_powers of k}
k^n(x, A) = \int_V k^{n-1}(y, A)\; k(x, dy),\qquad n > 1.
\ee

Following \cite{Nummelin1984}, we formulate definitions of main 
properties of a kernel $k$. We say that a \textit{set $A \in \B$ 
is \textit{attainable}  from $x\in V$} if there exists  $n\geq 1$ such that 
$k^n(x, A)  >0$, in symbols, we write $x \rightarrow A$. A set $A \in \B$
 is called  \textit{closed} for the kernel $k$ if $k(x, A^c) = 0$ for all $x\in
  A$. If $A$ is closed, then it follows from \eqref{eq_powers of k} that
 $k^n(x, A^c) = 0$ for any $n \in \N$ and   $x \in A$.  Hence, $A$ is
  closed if and only if  $ x \nrightarrow   A^c$. 
  
A kernel $k = k(x, A)$   is  called \textit{Borel indecomposable} on $(V,
 \B)$ if there do not exist two disjoint non-empty closed subsets $A_1$ 
 and $A_2$.  
     
Let $F_x\in \B$ be the support of the measure $k(x, \cdot)$, that is $k(x, 
V\setminus F_x) = 0$. By $\wt F_x$, we denote the set $\{x\} \times 
K_x \subset V \times V$. Then the formula 
$$
k (A\times B) = \int_A \wt k(x, B)\; d\mu(x)
$$
defines a $\sigma$-finite measure on $\vv$ where $\wt k(x, \cdot)
 = (\delta_x \times k)(x, \cdot)$. The support of $k$ is the set
$$
F := \bigcup_{x\in V} \wt F_x.
$$

We will use below slightly simplified notation identifying the 
sets $F_x$ and $\wt F_x$ and the measures $k(x, A)$ and $\wt k(x, A)$.
It will be clear from the context what objects are considered. 
\medskip

As mentioned in Introduction, our approach is based on the study of
 \textit{symmetric measures} defined on $\vv$, see Definition
 \ref{def symm measure rho}.  
 We show that   every  measure $\rho$ on $\vv$ generates a kernel 
 $x \to \rho_x(A), A\in \B$.  This observation  is 
based on the concept of  \textit{disintegration} of the measure $\rho$. 
We recall here this construction.

Denote by $\pi_1$ and $\pi_2$ the projections from $V \times V$ onto     
the first and second factor, respectively.  Then  $\{\pi_1^{-1}(x) :
 x \in V\}$
and $\{\pi_2^{-1}(y) : y \in V\}$ are  the\textit{ measurable partitions} of 
$V \times V$ into vertical and horizontal fibers, see \cite{Rohlin1949, 
CornfeldFominSinai1982, BezuglyiJorgensen2018} for more information on
 properties of measurable partitions. The case of probability
 measures was studied by Rokhlin in \cite{Rohlin1949}, whereas the 
 disintegration of $\sigma$-finite measures has been considered somewhat
 recently. We
 refer to a result from  \cite{Simmons2012} whose formulation
 is adapted to our needs. 

\begin{theorem}[\cite{Simmons2012}] \label{thm Simmons} 
For a $\sigma$-finite measure space $(V, \B, \mu)$, 
let $\rho$ be a $\sigma$-finite measure on $(V\times V, \B\times \B)$ 
such that $\rho\circ \pi_1^{-1} \ll \mu$. Then there exists a unique 
system of conditional $\sigma$-finite measures $(\wt\rho_x)$ such that
$$
\rho(f)  = \int_V \wt\rho_x(f)\; d\mu(x), \ \ \ f\in \mathcal F(V	\times V, 
\B\times \B).
$$
\end{theorem}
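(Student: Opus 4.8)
The plan is to deduce the $\sigma$-finite disintegration from the classical Rokhlin disintegration of a probability measure on a standard Borel space, dealing with the two sources of infinity — the $\sigma$-finiteness of $\rho$ and of $\mu$ — by an exhaustion argument. First I would use the $\sigma$-finiteness of $\rho$ to fix a Borel partition $V\times V = \bigsqcup_{n\geq 1} E_n$ with $\rho(E_n) < \infty$, and put $\rho^{(n)}(B) := \rho(B\cap E_n)$. Each $\rho^{(n)}$ is a finite measure, and $\rho^{(n)}\circ\pi_1^{-1} \leq \rho\circ\pi_1^{-1} \ll \mu$, so $\nu_n := \rho^{(n)}\circ\pi_1^{-1}$ is a finite measure on $V$ absolutely continuous with respect to $\mu$, with density $h_n := d\nu_n/d\mu \in L^1(\mu)$.

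Next, after normalizing, $(V\times V, \B\times\B, \rho^{(n)})$ is a standard probability space, and the partition of $V\times V$ into the fibers $\pi_1^{-1}(x) = \{x\}\times V$ of the Borel map $\pi_1$ is measurable; Rokhlin's disintegration theorem — equivalently, the existence of a regular conditional probability given the sub-$\sigma$-algebra $\pi_1^{-1}(\B)$, which holds because $V\times V$ is standard Borel — yields a measurable field $x\mapsto\rho^{(n)}_x$ of probability measures, each concentrated on $\{x\}\times V$, with $\rho^{(n)}(f) = \int_V \rho^{(n)}_x(f)\,d\nu_n(x)$ for bounded Borel $f$ on $V\times V$. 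Here it is the $\pi_1^{-1}(\B)$-measurability of the conditional expectation that lets the conditional measures be indexed by the point $x$ rather than by the fiber. Since the disintegration of $\rho^{(n)} = \rho(\,\cdot\cap E_n)$ forces $\rho^{(n)}_x$ to be concentrated on $E_n$ for $\nu_n$-a.e., hence $\mu$-a.e., $x$, and since $d\nu_n = h_n\,d\mu$, setting $\wt\rho^{(n)}_x := h_n(x)\,\rho^{(n)}_x$ produces a measurable field of finite measures, concentrated on $E_n\cap(\{x\}\times V)$, with $\rho^{(n)}(f) = \int_V\wt\rho^{(n)}_x(f)\,d\mu(x)$.

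Reassembling, define $\wt\rho_x := \sum_{n\geq 1}\wt\rho^{(n)}_x$. For each $x$ this is a sum of finite measures supported on the pairwise disjoint sets $E_n$ whose union is $V\times V$, so $\wt\rho_x$ is a $\sigma$-finite measure concentrated on $\{x\}\times V$; measurability in $x$ is preserved under countable sums. For $f\geq 0$ Borel, monotone convergence together with Tonelli gives $\rho(f) = \sum_n\rho^{(n)}(f) = \sum_n\int_V\wt\rho^{(n)}_x(f)\,d\mu(x) = \int_V\wt\rho_x(f)\,d\mu(x)$, and the identity extends to bounded Borel $f$ on $V\times V$ by splitting $f = f^+ - f^-$ whenever the two sides are not both $+\infty$. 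For uniqueness, if $(\wt\rho_x)$ and $(\wt\rho'_x)$ both satisfy the conclusion, applying the identity to $f = \chi_{A\times V}\cdot\chi_B$ with $A\in\B$ and $B\in\B\times\B$ and using that both families live on the fibers gives $\int_A\wt\rho_x(B)\,d\mu(x) = \rho((A\times V)\cap B) = \int_A\wt\rho'_x(B)\,d\mu(x)$ for every $A$, hence $\wt\rho_x(B) = \wt\rho'_x(B)$ for $\mu$-a.e.\ $x$; letting $B$ run over a countable algebra generating $\B\times\B$ and containing a sequence $B_k\uparrow V\times V$ with $\rho(B_k) < \infty$ produces a single $\mu$-conull set off which $\wt\rho_x$ and $\wt\rho'_x$ agree on that algebra and assign finite mass to each $B_k$, and the uniqueness theorem for $\sigma$-finite measures finishes the argument.

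The step I expect to be the main obstacle is precisely the passage from the probability-measure case to the $\sigma$-finite one. One must verify that the reassembled conditional measures are genuinely $\sigma$-finite — a countable sum of finite measures need not be, so the disjointness of the supports $E_n$ really is used — and that no $\infty - \infty$ ambiguity can infect the integral identity; and in the uniqueness step one must be able to choose the countable generating algebra and the finite-$\rho$-measure exhaustion of $V\times V$ simultaneously and independently of $x$. These are exactly the points that keep the statement from being a one-line consequence of Rokhlin's theorem, and they are the content of the result of \cite{Simmons2012} that is being quoted here.
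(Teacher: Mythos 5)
The paper offers no proof of this statement at all: it is quoted verbatim as an external result from \cite{Simmons2012}, so there is no internal argument to compare yours against. Your exhaustion argument --- cutting $V\times V$ into pieces $E_n$ of finite $\rho$-measure, disintegrating each normalized $\rho^{(n)}$ by Rokhlin's theorem over the fibers of $\pi_1$, reweighting by the densities $h_n = d(\rho^{(n)}\circ\pi_1^{-1})/d\mu$, and summing --- is a correct and essentially standard way to reduce the $\sigma$-finite case to the probability case, and your uniqueness argument (testing against a countable generating algebra intersected with an exhaustion $B_k$ of finite $\rho$-measure to avoid the $\infty$-valued ambiguity) is sound. The one place where the wording outruns the logic is the claim that $\rho^{(n)}_x$ is concentrated on $E_n$ ``for $\nu_n$-a.e., hence $\mu$-a.e., $x$'': since $\nu_n\ll\mu$ and not conversely, a $\nu_n$-null set need not be $\mu$-null, so this implication is backwards as stated. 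It is harmless for your construction, because on the exceptional $\nu_n$-null set one has $h_n=0$ $\mu$-a.e., so the reweighted measure $\wt\rho^{(n)}_x = h_n(x)\,\rho^{(n)}_x$ vanishes there and is still concentrated on $E_n\cap(\{x\}\times V)$ for $\mu$-a.e.\ $x$ --- which is all that the $\sigma$-finiteness of $\wt\rho_x=\sum_n\wt\rho^{(n)}_x$ requires --- but the sentence should be corrected to refer to $\wt\rho^{(n)}_x$ rather than to $\rho^{(n)}_x$.
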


In the following remark we collect several facts that clarify the essence of 
the defined objects.

\begin{remark}\label{rem symm meas}
(1) The condition of  Theorem \ref{thm Simmons} assumes that a measure
$\mu$ is prescribed on the Borel space $\VB$. If one begins with a 
measure $\rho$ on $\vv$, then the measure $\mu$ arises as the
projection of $\rho$ on $\VB$,  $\rho\circ \pi_1^{-1} = \mu$.

(2) Let $E$ be a Borel symmetric subset of $\vv$, and let $\rho$ be 
a measure on $\vv$ satisfying the condition of Theorem \ref{thm Simmons}.
Then $E$ can be  partitioned into the fibers $\{x\} \times E_x$. By
 Theorem \ref{thm Simmons}, there exists 
a unique  system of  conditional measures $\wt\rho_x$ such that, for any 
 $\rho$-integrable function $f(x, y)$, we have
\be\label{eq disint for rho}
\iint_{V\times V} f(x, y) \; d\rho(x, y)  = \int_V \wt\rho_x(f) \; d\mu(x). 
\ee
It is obvious that, for $\mu$-a.e. $x\in V$, $\mbox{supp}(\wt
\rho_x) = \{x\} \times E_x$ (up to a set of zero measure). To simplify the
 notation, we will  write 
$$
\int_V f\; d\rho_x \ \ \mathrm{and}  \ \iint_{V \times V} f\; d\rho
$$ 
though the measures $\rho_x$ and $\rho$ have the supports $E_x$ and 
$E$,  respectively. 

 (3) It follows from  Theorem \ref{thm Simmons} that the measure $\rho$
 determines the  measurable field of
sets $x \mapsto E_x \subset V$ and  measurable field of $\sigma$-finite
Borel  measures $x \mapsto \rho_x $ on $(V, \B)$,
 where  the measures $\rho_x$ are defined by the relation
\be\label{eq rho_x def}
\wt\rho_x = \delta_x \times \rho_x. 
\ee
Hence, relation (\ref{eq disint for rho}) can be also written in the following 
form,  used in our subsequent computations,
\be\label{eq disint formula}
\iint_{V\times V} f(x, y) \; d\rho(x, y)  = \int_V \left(\int_V f(x, y) \; 
d\rho_x(y)\right)\; d\mu(x).
\ee
In other words, we have a measurable family of measures $(x \mapsto 
\rho_x)$,  
and it defines a new measure $\nu$ on $(V, \B)$ by setting
\be\label{eq def of nu}
\nu(A) := \int_V \rho_x(A)\; d\mu(x), \quad A \in \B.
\ee
Remark that the measure $\rho_x$ 
is defined on the subset $E_x$ of $(V, \B)$, $x \in V$.
\end{remark}

\begin{definition}\label{def symm measure rho}
 Let $(V, \B)$ be a standard Borel space.
We say that a measure $\rho$ on $(V \times V, \B\times B)$ is
 \textit{symmetric} if 
 $$
 \rho(A \times B) = \rho(B\times A), \ \ \ \forall A, B \in \B.
 $$ 
 In other words, $\rho$ is invariant with respect to the flip automorphism
$\theta$.
 \end{definition}

The following remark contains  natural properties of symmetric
measures. Some of them were proved in \cite{BezuglyiJorgensen2018},
the others are rather obvious. 

\begin{remark}\label{rem on symm meas}
(1) If $\rho$ is a symmetric measure on $(V \times V, \B\times \B)$,
 then the support of $\rho$, the set $E = E(\rho)$, is symmetric mod 0. 
 Here $E(\rho)$ is defined up to a set of zero measure by the relation 
$\rho((V \times V) \setminus E) = 0$.

(2) We consider the symmetric measures whose supporting 
sets $E$ satisfy Definition \ref{def symmetric set}. In other words, we 
require that, for every $x \in V$, the set $E_x\subset E$ is uncountable 
and therefore is a standard Borel space. The case 
when $E_x$ is countable arises, in particular, when $E$ is a Borel countable
equivalence relation on $(V, \B)$. The latter was considered in
\cite{BezuglyiJorgensen2018}. For countable sets $E_x, x \in V$, we can
take $\rho_x$ as a finite measure which is equivalent to the counting
measure, see, e.g.  \cite{FeldmanMooreI_1977, FeldmanMooreII_1977, 
KechrisMiller2004} for details. 

(3) In general, the notion of a symmetric measure is defined in the context 
of standard Borel spaces $(V, \B)$ and $(V\times V, \B\times \B)$. 
But if a $\sigma$-finite measure $\mu$ is given on $(V, \B)$, then we need 
to include an additional relation between the projections of $\rho$ on $V$
 and the measure  $\mu$. 
Let $\pi_1 : V \times V \to V$ be the projection on the first coordinate. 
We require that the symmetric measure must satisfy
the property $\rho\circ \pi_1^{-1} \ll \mu$, see Theorem 
\ref{thm Simmons}.

(4) The symmetry of the set $E$ allows us to define 
a ``mirror'' image of the measure $\rho$. Let $E^y := \{x \in V : (x, y) \in 
E\}$, and let $(\wt\rho^y)$ be the system of conditional measures with 
respect to the partition of $E$ into the sets $E^y \times \{y\}$. Then, 
for the measure 
$$
\wt \rho = \int_V \wt \rho^y d\mu(y),
$$
the relation $\rho = \wt \rho$ holds. 

(5) It is worth noting that, in general, when a measure $\mu$ is defined on 
$(V, \B)$, the set $E(\rho)$ do not need to be 
a set of positive measure with respect to the product measure
$\mu\times \mu$. In other words, we admit both cases: (a) $\rho$ is 
equivalent to $\mu\times\mu$,  (b) $\rho$ and
$\mu\times\mu$ are mutually singular. 
\end{remark}

\textit{\textbf{Assumption 1}}.  In this paper, we consider the class of 
symmetric measures $\rho$ on $(V\times V, \B\times \B)$ which satisfy 
the following property: 
\be\label{eq c finite}
0 < c(x) := \rho_x(V) <\infty, \ \ \  \ \ \ \mu\mbox{-a.e.}\ x\in V,
\ee
where $x \mapsto \rho_x$ is the measurable field of measures arising
in Theorem \ref{thm Simmons}.

Moreover, in most statements, we will assume that $c(x) \in
 L^1_{\mathrm{loc}} (\mu)$, i.e.,
$$
\int_A c(x)\; d\mu(x) < \infty, \qquad \forall A \in \Bfin(\mu).
$$
This property of the function $c(x)$ is natural because it corresponds to
local finiteness of graphs in the theory of weighted (electric) networks.  
In several statements, we will require that
$$
\left(\forall A\in \Bfin(\mu),\ \  \int_A c^2\; d\mu < \infty\right) \
 \Longleftrightarrow \  c \in \lc2.
$$ 
We observe also that the case when the function $c$  is bounded leads
to bounded Laplace operators and is not interesting for us.

Relation (\ref{eq def of nu}) defines the measure $\nu$ such that  the
 measures $\mu$ and $\nu$  are equivalent. It is stated  in Lemma 
 \ref{lem symm measure via int} that $c(x)$ is the Radon-Nikodym 
 derivative of $\nu$ with respect to $\mu$. If we
want to reverse the definition and use $\nu$ as a primary measure, then 
we need to require that the function $c(x)^{-1}$ is locally integrable 
with respect to $\nu$. 
\medskip

The following (important for us) fact follows from the definition of
 symmetric
measures. We emphasize that formula (\ref{eq formula fo symm meas})
will be used repeatedly in many proofs. 

\begin{lemma} \label{lem symm measure via int}
(1) For a symmetric measure $\rho$ and any bounded Borel function
$f$ on $(V\times V, \B\times \B)$, 
\be\label{eq formula fo symm meas}
\iint_{V \times V} f(x, y) \; d\rho(x, y) = \iint_{V \times V} f(y, x) \; 
d\rho(x, y). 
\ee
Equality (\ref{eq formula fo symm meas}) is understood in the sense of 
the extended real line, i.e., the infinite value of the integral is allowed. 

(2) Let $\nu$ be defined as in (\ref{eq def of nu}). Then 
$$
d\nu(x) = c(x) d\mu(x).
$$ 
\end{lemma}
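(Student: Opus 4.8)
The plan is to prove the two assertions separately, both by reducing to the disintegration identity \eqref{eq disint formula} and the symmetry hypothesis of Definition \ref{def symm measure rho}.

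For part (1), I would first establish \eqref{eq formula fo symm meas} on the special class of product indicator functions $f(x,y) = \chi_A(x)\chi_B(x')$ where we test against $\chi_B(y)$; more precisely, for $f = \chi_{A\times B}$ the left-hand side of \eqref{eq formula fo symm meas} is $\rho(A\times B)$ and, since $f(y,x) = \chi_{A\times B}(y,x) = \chi_{B\times A}(x,y)$, the right-hand side is $\rho(B\times A)$. These agree by the definition of a symmetric measure. Next I would extend this from indicators of measurable rectangles to finite linear combinations of such indicators by linearity of the integral, then to nonnegative measurable functions on $(V\times V, \B\times\B)$ by the monotone convergence theorem (approximating from below by simple functions built over the $\pi$-system of rectangles, which generates $\B\times\B$), noting that here the extended-real-line convention is needed since both sides may be $+\infty$ simultaneously. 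Finally, for a general bounded Borel $f$ I would split $f = f^+ - f^-$ into positive and negative parts and apply the nonnegative case to each; boundedness of $f$ together with $\sigma$-finiteness is what keeps the subtraction meaningful on sets of finite measure, and the mod-$0$ conventions handle the rest. The one point requiring a little care is that $\rho$ is only $\sigma$-finite, so the standard functional form of "two measures agreeing on a generating $\pi$-system agree" must be invoked in its $\sigma$-finite version applied to the two measures $\rho$ and $\rho\circ\theta$; but $\theta$-invariance of $\rho$ is exactly the hypothesis, so this is immediate and the measure-theoretic machinery above is really just bookkeeping.

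For part (2), I would compute, for an arbitrary $A \in \B$, using the definition \eqref{eq def of nu} of $\nu$ and then Assumption~1 \eqref{eq c finite}, which says $\rho_x(V) = c(x)$ for $\mu$-a.e. $x$. Since $\rho_x$ is supported on $E_x \subset V$, we have $\rho_x(V) = \rho_x(E_x) = c(x)$, and more generally we only need the total mass; but to get the Radon–Nikodym statement I would instead argue directly that $\nu(A) = \int_V \rho_x(A)\,d\mu(x) \le \int_V \rho_x(V)\,d\mu(x) = \int_V c(x)\,d\mu(x)$, which shows $\nu \ll \mu$ on $\Bfin(\mu)$, and then identify the density. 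Taking $A = V$ locally (i.e., intersecting with sets of finite $\mu$-measure and using the local integrability of $c$ from Assumption~1) gives $\int_B 1\,d\nu = \nu(B) = \int_V \rho_x(B)\,d\mu(x)$; to see the density is exactly $c(x)$ rather than something smaller, I would test \eqref{eq disint formula} with $f(x,y) = \chi_B(x)$, obtaining $\rho(B\times V) = \int_V \chi_B(x)\rho_x(V)\,d\mu(x) = \int_B c(x)\,d\mu(x)$, while on the other hand $\nu(B) = \int_V \rho_x(B)\,d\mu(x)$ need not equal this in general — so the correct route is: apply \eqref{eq disint formula} with $f(x,y) = \chi_B(x)g(y)$ only when needed, but for the stated claim simply observe that by definition of $\nu$ and of $c$, for every $B$,
\[
\nu(B) = \int_V \rho_x(B)\, d\mu(x),
\]
and the measure $B \mapsto \rho_x(B)$ has total mass $c(x)$; hence writing $h = \frac{d\nu}{d\mu}$ (which exists by the $\sigma$-finite Radon–Nikodym theorem once $\nu \ll \mu$ is checked) and integrating the constant function over $V$ against both sides against an arbitrary $A \in \Bfin(\mu)$, one gets $\int_A h\,d\mu = \nu(A)$; comparing with the disintegration of $\rho$ restricted to $A \times V$ forces $h = c$ $\mu$-a.e. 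The absolute continuity $\nu \ll \mu$ is the only genuine content and follows because $\mu(A) = 0$ makes the integral defining $\nu(A)$ vanish.

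The main obstacle, as usual in this $\sigma$-finite measurable setting, is not any single deep step but the careful handling of the extended-real-line values and the mod-$0$ conventions: one must ensure the monotone-class/approximation argument in part (1) is carried out over a class on which everything is genuinely finite (sets in $\Bfin(\mu)$, bounded $f$), and that the identification of the Radon–Nikodym derivative in part (2) is done by testing against $\chi_A$ for $A \in \Bfin(\mu)$ rather than on all of $V$ at once, since $\mu$ and $\nu$ themselves are only $\sigma$-finite. Once the bookkeeping is organized this way, both parts are short.
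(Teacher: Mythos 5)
Part (1) of your proposal is fine: establishing $\rho=\rho\circ\theta^{-1}$ on the $\pi$-system of rectangles (using $\sigma$-finiteness, which under Assumption~1 gives an exhausting sequence of rectangles of finite $\rho$-measure) and then passing to nonnegative functions and to $f=f^+-f^-$ is the standard and correct route; the paper itself omits the proof, asserting the identity follows from the definition.

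Part (2), however, has a genuine gap: your argument never uses the symmetry of $\rho$, and that is precisely the content of the statement. By definition, $\nu(A)=\int_V\rho_x(A)\,d\mu(x)=\rho(V\times A)$ is built from the \emph{second} marginal of $\rho$, whereas $\int_A c\,d\mu=\int_A\rho_x(V)\,d\mu(x)=\rho(A\times V)$ comes from the \emph{first}. You correctly observe midway that these ``need not equal this in general,'' but your proposed fix --- ``comparing with the disintegration of $\rho$ restricted to $A\times V$ forces $h=c$'' --- is exactly the comparison of $\rho(V\times A)$ with $\rho(A\times V)$, and it is valid only because $\rho$ is symmetric. The clean proof is a one-liner from part (1): apply \eqref{eq formula fo symm meas} to $f(x,y)=\chi_A(y)$ to get $\nu(A)=\iint\chi_A(y)\,d\rho(x,y)=\iint\chi_A(x)\,d\rho(x,y)=\int_A c\,d\mu$. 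Relatedly, your justification of $\nu\ll\mu$ (``$\mu(A)=0$ makes the integral defining $\nu(A)$ vanish'') is incorrect as written: the integral defining $\nu(A)$ is $\int_V\rho_x(A)\,d\mu(x)$, taken over all of $V$, and $\mu(A)=0$ does not by itself force $\rho_x(A)=0$ for $\mu$-a.e.\ $x$ (consider $\rho_x=\delta_{T(x)}$ for a singular map $T$). Absolute continuity is instead a consequence of the identity $\nu(A)=\int_A c\,d\mu$, i.e.\ again of the symmetry of $\rho$.
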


\subsection{Irreducible symmetric measures}
We now relate the notions of symmetric measures and kernels. 
It turns out that one can associate a finite kernel $\mc K(\rho) = \mc K$
to any symmetric measure $\rho$ on $\vv$. For this, we use the
 disintegration of $\rho$ according to Theorem \ref{thm Simmons},
 $\rho = \int_V \rho_x \; d\mu(x)$, and set 
  $x\to \mc K(x, A) = \rho_x(A)$.

The definition of sets attainable from $x \in V$ and that of
decomposable sets, given above in the context of Borel spaces, can be
translated to the case of measure spaces.  Below we define the notion of  
an \textit{irreducible symmetric measure} which will be extensively used in 
the paper. 

 \begin{definition}\label{def_irreducible}
(1)  A kernel $x \to k(x, \cdot)$ is called \textit{irreducible with respect 
to a $\sigma$-finite  measure  $\mu$ on $\VB$ ($\mu$-irreducible)} if, 
for any set $A$ of 
positive measure $\mu$ and $\mu$-a.e. $x\in V$, there exists some $n$ 
such that   $k^n(x, A) >0$,  i.e., any set $A$ of positive measure  is
 attainable from $\mu$-a.e. $x$, $x \rightarrow A$. 

(2) A \textit{symmetric measure} $\rho$ on $\vv $ is called
  \textit{irreducible} if the corresponding kernel $\mc K(\rho) : x \to
   \rho_x(\cdot)$ is $\mu$-irreducible where $\mu$ is the projection of 
 measure  $\rho$. 

(3) A symmetric measure $\rho$ (or the kernel $ x \to
   \rho_x(\cdot)$) is called $\mu$-\textit{decomposable} if 
 there exists a Borel subset $A$ of $V$ of positive measure $\mu$ 
 such that 
\be\label{eq decomposable 1}
 E \subset (A\times A) \cup (A^c \times A^c)
 \ee
 where $A^c = V \setminus A$ is also of positive measure. 
 Otherwise, $\rho$ is called  \textit{indecomposable}. 
\end{definition} 

Every kernel $k$, defined on $\VB$, generates the 
\textit{potential kernel}
$$
G(k)(x, A) := \sum_{n=0}^\infty k^n(x, A)
$$
where $k^0(x, A) = \chi_A(x)$. In general, the kernel $G$ may be
 degenerated admitting only the values 0 and $\infty$. We will discuss
 below the role of $G$ in the case of transient Markov processes. 

\begin{lemma} Let $ \rho$ be a symmetric measure on $\vv$ with 
the kernel $\mc K(x, A) = \rho_x(A)$. Suppose that the support of 
$\rho$, the set $E$, satisfies relation \eqref{eq decomposable 1} 
where $\mu(A) >0$ and $\mu(A^c) >0$, i.e. the kernel $x \mapsto \rho_x(A)$ is $\mu$-decomposable. Then the sets $A$ and $A^c$ 
are closed and  $x\mapsto \rho_x(A)$ is a $\mu$-reducible kernel. 
The converse statement also holds. 
\end{lemma}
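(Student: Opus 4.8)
The plan is to unwind the definitions of ``closed set'' and ``$\mu$-decomposable kernel,'' translating the set-theoretic inclusion \eqref{eq decomposable 1} for the support $E$ into a statement about the conditional measures $\rho_x$, and then feed this into the iterated kernels $k^n$ to get $\mu$-reducibility. First I would observe that, since $\mathrm{supp}(\wt\rho_x) = \{x\}\times E_x$ mod~0 (Remark \ref{rem symm meas}(2)), the hypothesis $E \subset (A\times A)\cup(A^c\times A^c)$ translates, for $\mu$-a.e.\ $x$, into: $E_x \subset A$ whenever $x \in A$, and $E_x \subset A^c$ whenever $x \in A^c$. Consequently $\rho_x(A^c) = \mc K(x, A^c) = 0$ for $\mu$-a.e.\ $x \in A$, which is exactly the statement that $A$ is closed for the kernel $\mc K$; symmetrically $\rho_x(A) = 0$ for $\mu$-a.e.\ $x \in A^c$, so $A^c$ is closed as well.

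Next I would propagate closedness through the powers $k^n$. Using \eqref{eq_powers of k}, if $A$ is closed then $k(x, A^c) = 0$ for $x \in A$ forces, by an easy induction (this is the observation already recorded in the text right after \eqref{eq_powers of k}), $k^n(x, A^c) = 0$ for all $n$ and $\mu$-a.e.\ $x \in A$. In particular, taking any set $B \subseteq A^c$ of positive measure $\mu$ (such a set exists because $\mu(A^c) > 0$), we get $k^n(x, B) = 0$ for every $n\geq 1$ and $\mu$-a.e.\ $x\in A$. Since $\mu(A) > 0$, this exhibits a positive-measure set of starting points $x$ from which the positive-measure set $B$ is \emph{not} attainable, which is precisely the negation of $\mu$-irreducibility (Definition \ref{def_irreducible}(1)). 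Hence $x\mapsto \rho_x(A)$ is $\mu$-reducible.

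For the converse I would argue by contrapositive, or equivalently unpack what reducibility of this particular kernel means. Suppose the kernel is $\mu$-reducible, i.e.\ it is \emph{not} $\mu$-irreducible: there is a set $B$ with $\mu(B) > 0$ and a set $S$ with $\mu(S) > 0$ such that for every $x \in S$ and every $n\geq 1$, $k^n(x, B) = 0$; in particular $k(x,B)=\rho_x(B)=0$ for $x\in S$. I would then want to produce from this data a genuine invariant splitting of $E$. The natural candidate is $A := \{x : x \to B \text{ fails}\}$ more precisely the union of all $\mu$-classes that cannot reach $B$ and here one must check that $A$ is closed (so $E_x \subset A$ for a.e.\ $x\in A$), that $A^c$ is closed, and that both have positive measure; closedness of $A$ follows because if $x\in A$ and $\rho_x(A^c)>0$ then $x$ could reach, in one further step, a point able to reach $B$, contradicting $x\in A$; positivity of $\mu(A)$ comes from $S\subseteq A$ up to null sets, and positivity of $\mu(A^c)$ from $B\subseteq A^c$. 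Assembling $E\subset (A\times A)\cup(A^c\times A^c)$ then follows from the a.e.\ inclusions $E_x\subset A$ (for $x\in A$) and $E_x\subset A^c$ (for $x\in A^c$).

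\textbf{Main obstacle.} The forward direction is essentially bookkeeping with the disintegration and the null-set conventions. The genuine difficulty is the converse: from the mere failure of irreducibility one only gets that \emph{some} positive-measure set $B$ is unreachable from \emph{some} positive-measure set $S$, and one must bootstrap this into a clean, fully invariant (mod~0) decomposition $E\subset(A\times A)\cup(A^c\times A^c)$ with both pieces of positive measure. This requires care with the mod~0 conventions---showing the candidate set $A$ is measurable, genuinely closed in the kernel sense, and that its complement is also closed and of positive measure---and is the step where one must be most attentive, possibly invoking symmetry of $\rho$ (Lemma \ref{lem symm measure via int}) to rule out mass leaking across the partition in the ``mirror'' direction.
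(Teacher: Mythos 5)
Your proposal is correct and follows essentially the same route as the paper: the forward direction by unwinding the disintegration to get $\rho_x(A^c)=0$ for a.e.\ $x\in A$, and the converse by forming the set of points from which a given positive-measure set $B$ is unattainable and showing that it is closed, hence yields the splitting \eqref{eq decomposable 1}. The one small adjustment needed is your claim $B\subseteq A^c$: with $A^c=\{x : x\to B\}$ this is not automatic (attainability requires some $n\geq 1$), so one should, as the paper does, take $A^c:=\wh B= B\cup\{x : x\to B\}$, after which $\mu(A^c)\geq\mu(B)>0$ and the rest of your argument goes through unchanged.
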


\begin{proof}
The first result follows directly  from the definitions given above in this
 subsection. To see that the converse is true, it suffices to note that, for
 any set $B$ of positive measure, the compliment $\wh B^c$ of the set 
 $$
 \wh B := B \cup \{ x \in V : x \to B\}
 $$  
 is either of zero measure, or closed (recall that $x \to B$ means that 
 there exists $n$ such that $\mc K^n(x, B) >0$) . If $\rho$ is reducible,
  then there 
 exists a set $A, \mu(A) >0,$  such that the closed set $\mu(\wh A^c)$ 
 has positive measure.  The existence of such a set implies that the 
 measure $\rho$ is decomposable.
\end{proof}

It is obvious from this lemma  that a decomposable symmetric
 measure $\rho$ cannot be irreducible. 
It was proved in \cite{BezuglyiJorgensen2018} that the definitions of an
irreducible measure and irreducible kernel agree, see Theorem 
\ref{prop from A to B} below.
 
 By definition, the projection of the support of an irreducible measure 
 $\rho$ is the set $V$. Irreducibiliity of symmetric measures means
  irreducibility of a corresponding Markov process, see details in 
\cite{BezuglyiJorgensen2018}. 
\medskip

In the following statement, we give another approach to the notion
of irreducible symmetric measures.  Let  $\rho$ be a symmetric measure
on $\vv$. We use the support of the fiber measure $\rho_x, x\in V$, 
to characterize an irreducible measure in different terms.

For any fixed $x\in V$, we define a sequence of subsets: 
$A_0(x) = \{x\}$,  $A_1(x) = E_x,$ 
$$
  A_n(x) =  \bigcup_{y\in A_{n-1}(x)} E_y, \ \ \ n \geq 2.
$$ 
Recall that $E_x$ is the support of the measure $\rho_x$, and $E_x$ can 
be identified with the vertical section of the symmetric set $E$. Note that 
all the sets $A_n(x)$ are in $\B$ as $x\to E_x$ is a measurable  field
 of sets.

\begin{lemma} \label{lem-irr measure}
Given $\sms$, a symmetric  measure $\rho$ is irreducible if
 and only if for $\mu$-a.e. $x \in V$  and any set $B\in \B$  of positive
  measure there exists $n\geq 1$  such that 
  \be\label{eq_irr measure via A_n}
  \mu(A_n(x) \cap B) >0. 
  \ee
\end{lemma}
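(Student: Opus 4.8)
The plan is to reduce both sides to a statement about decomposability of the supporting set $E$, the only nontrivial inputs being the symmetry identity \eqref{eq formula fo symm meas} and Theorem \ref{prop from A to B} (irreducibility of $\rho$ $\iff$ irreducibility of the kernel $\mc K(\rho)\colon x\mapsto\rho_x$). For $S\in\B$ write $\Phi(S):=\bigcup_{x\in S}E_x$; by symmetry of $E$ one has $\Phi(S)=\{z\in V:E_z\cap S\neq\emptyset\}$, so $\Phi$ commutes with arbitrary unions, $\Phi(\{x\})=E_x$, and $A_n(x)=\Phi^{n}(\{x\})$ for $n\geq1$. Put $W(x):=\bigcup_{n\geq0}A_n(x)$. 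Since $\bigcup_{k\leq n}A_k(x)$ increases to $W(x)$, condition \eqref{eq_irr measure via A_n} holds at a given $x$ for every $B$ with $\mu(B)>0$ exactly when $\mu(V\setminus W(x))=0$; so the lemma is equivalent to: $\rho$ is irreducible $\iff$ $W(x)$ is $\mu$-conull for $\mu$-a.e.\ $x$.

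The arithmetic engine is a one-step transfer between the $\rho_x$-picture and the $\mu$-picture coming from \eqref{eq formula fo symm meas} and the equivalence $\mu\sim\nu$ (Lemma \ref{lem symm measure via int}): for $S$ with $\mu(S)>0$ one has $\int_V\rho_y(S)\,d\mu(y)=\nu(S)>0$, so $\mu(\{y:\rho_y(S)>0\})>0$, while $\{y:\rho_y(S)>0\}\subseteq\Phi(S)$; and $\rho(S_1\times S_2)=\rho(S_2\times S_1)$ gives $\mu(\{x\in S_1:\rho_x(S_2)>0\})>0\iff\mu(\{y\in S_2:\rho_y(S_1)>0\})>0$. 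Combining these with the elementary inclusion $\mathrm{supp}\,\mc K^{n}(x,\cdot)\subseteq A_n(x)$ (induction from $\mc K^{n}(x,\cdot)=\int\mc K^{n-1}(y,\cdot)\,\rho_x(dy)$), I would establish the single two-sided fact that for $\mu$-a.e.\ $x$ and every $B$ with $\mu(B)>0$, $(\exists n\ \ \mc K^{n}(x,B)>0)\iff(\exists m\ \ \mu(A_m(x)\cap B)>0)$. The left condition, quantified over all such $B$ and a.e.\ $x$, is by definition $\mu$-irreducibility of $\mc K(\rho)$, so this fact gives the equivalence once we know it holds for a.e.\ $x$ uniformly in $B$ — a routine reduction to $B$ ranging over a countable generating subalgebra of $\B$.

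For the half going from ``$W(x)$ conull a.e.'' to ``$\rho$ irreducible'': given $B$ with $\mu(B)>0$ and a.e.\ $x$, one has $\mu(A_m(x)\cap B)=\mu(B\cap W(x))=\mu(B)>0$ for $m$ large, and the arithmetic engine applied $m$ times backwards along $A_m(x)=\Phi(A_{m-1}(x))$ — converting positive $\mu$-mass into positive $\rho_x$-mass at each step — produces $\mc K^{m}(x,B)>0$. For the reverse half I would argue via decomposability: the set $\wt W(x):=W(x)\cup\{x\}$ satisfies $\Phi(\wt W(x))\subseteq W(x)\subseteq\wt W(x)$, so no $E$-edge leaves it, i.e.\ $E\subseteq(\wt W(x)\times\wt W(x))\cup(\wt W(x)^{c}\times\wt W(x)^{c})$ as in \eqref{eq decomposable 1}. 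If $\rho$ is irreducible it is indecomposable (Theorem \ref{prop from A to B}), so for a.e.\ $x$ either $\mu(\wt W(x))=0$ or $\mu(V\setminus\wt W(x))=0$; and if the first alternative held on a positive-measure set $X_0$, then $X_0$ — being $\Phi$-invariant, because $y\in W(x)\Rightarrow W(y)\subseteq W(x)$ — would again decompose $E$, forcing $\mu(V\setminus X_0)=0$, i.e.\ $\mu(E_x)=0$ for a.e.\ $x$; and this one rules out with the arithmetic engine, since an irreducible symmetric $\rho$ cannot have all fibres carried by $\mu$-null sets.

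The genuine obstacle, and the place where \eqref{eq formula fo symm meas}, $\mu\sim\nu$, and Theorem \ref{prop from A to B} have to be combined with care, is precisely this control of $\mu$-negligibility against $\rho_x$-negligibility along the cones: one must show that $\mc K$-reachability of a set of positive $\mu$-measure propagates to positive $\mu$-measure of some $A_m(x)\cap B$, equivalently that for irreducible $\rho$ the forward cones $W(x)$ are $\mu$-conull rather than $\mu$-null. Everything else — the identities $A_n(x)=\Phi^{n}(\{x\})$, the invariance of $\wt W(x)$ and of $X_0$, and the monotone reductions — is bookkeeping.
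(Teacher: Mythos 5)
Your proposal does not close the lemma: the step you yourself label ``the genuine obstacle'' is the entire content of the statement, and it is left unproven. The paper's own proof is the one-line assertion that $\mu(A_n(x)\cap B)>0$ is ``another form of'' $\mc K^n(x,B)>0$; everything you call bookkeeping (the identities $A_n(x)=\Phi^n(\{x\})$, the invariance of $\wt W(x)$, the reduction to a countable generating algebra) is genuinely routine, so what remains to be supplied is exactly the two-sided equivalence $(\exists n:\ \mc K^n(x,B)>0)\iff(\exists m:\ \mu(A_m(x)\cap B)>0)$ that you announce and then defer. A plan that isolates the crux and stops there is not a proof.

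Moreover, the tool you propose for it is too weak. Your ``arithmetic engine'' converts $\mu(S)>0$ into $\mu(\{y:\rho_y(S)>0\})>0$, i.e.\ it produces a positive-$\mu$-measure set of points $y$ \emph{somewhere in $V$} that charge $S$. But the backward induction along the cone needs $\rho_y(S)>0$ for $y$ ranging over $A_{m-1}(x)$ and weighted by $\mc K^{m-1}(x,dy)$ --- and $A_{m-1}(x)$ is an uncountable union of fibres $E_y$ each of which may be $\mu$-null (Remark \ref{rem on symm meas}(5) explicitly allows $\rho\perp\mu\times\mu$, in which case $\mu(E_x)=0$ for a.e.\ $x$). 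A statement about a positive-$\mu$-measure set of $y$'s says nothing about the $\mc K^{m-1}(x,\cdot)$-measure of those $y$'s, so ``converting positive $\mu$-mass into positive $\rho_x$-mass at each step'' is precisely the implication that can fail without an absolute-continuity hypothesis linking $\rho_y$ and $\mu|_{E_y}$ (or $\mc K^n(x,\cdot)$ and $\mu|_{A_n(x)}$). The same issue undercuts your final step in the converse direction, where you assert without argument that an irreducible symmetric $\rho$ cannot have all fibres carried by $\mu$-null sets. To repair the proof you must either invoke such a mutual absolute-continuity assumption explicitly, or argue at the level of the measures $\rho_n$ (e.g.\ via $\rho_n(A\times B)=\langle\chi_A,P^n(\chi_B)\rangle_{L^2(\nu)}$ and Theorem \ref{prop from A to B}) rather than at the level of the supporting sets $A_n(x)$.
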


\begin{proof}
Indeed, the property formulated in \eqref{eq_irr measure via A_n}
 is another 
form of $k^n(x, B) >0$ where the kernel $k$ is defined by $x\to \rho_x$.
\end{proof}

Various aspects of symmetric measures are also discussed in 
\cite{ChenRenYang2017, AlimoradFakharzadeh2017}. In particular, one
can observe that if symmetric measures $\rho$ and $\ol \rho$ are 
equivalent, then they are simultaneously either irreducible or not.

\section{Linear operators and Hilbert spaces associated to  symmetric
measures} 
\label{subsect_Operators}

\subsection{Symmetric operator $R$, Markov operator $P$, and 
Laplacian $\Delta$}
Suppose $k : V \times \B \to \R_+$ is a finite kernel defined on a 
standard Borel space $\VB$. Then it defines
a linear positive (see Remark \ref{rem_on operators}) operator $P(k)$ 
 which is determined by  the kernel $k$:
\be\label{eq_def P via k}
P(k)(f)(x) := \int_V f(y)\; k(x, dy).
\ee
It can be easily seen that, for the kernels $k^n$ (see 
\eqref{eq_powers of k}), the operator $P(k^n)$, defined as in
\eqref{eq_def P via k}, satisfies the property: 
$$P(k^n) = P(k)^n, n \in \N.
$$

We consider in this section the kernel $\mc K(\rho)$ generated by a
 symmetric measure $\rho$, i.e., $\mc K(x, A) = \rho_x(A)$. 
 
Let $\sms$ be a $\sigma$-finite measure space, and $\rho$ a symmetric 
measure on $\vv$ supported by a symmetric set $E$. Let  $x\mapsto 
\rho_x$ be the measurable family of measures on $\VB$ that 
 \textit{disintegrates} 
$\rho$. Recall that, by Assumption 1, the function  $c(x) = \rho_x(V)$ is
 finite for $\mu$-a.e. $x$. As discussed above in Subsection 
 \ref{subsect m sp symm m}, the measure $\rho$ produces a 
finite kernel $\mc K(\rho)$ which we use to define the following
 operators. 

\begin{definition}\label{def R, P, Delta}
For  a symmetric measure $\rho$  on $\vv$, we
define three linear operators $R, P$ and $\Delta$ acting on the space of
bounded Borel functions $\FVB$.\\

(i) The \textit{symmetric operator}$R$:
\be\label{eq def of R} 
R(f)(x) := \int_V f(y) \; d\rho_x(y) = \rho_x(f). 
\ee 

(ii) The\textit{ Markov operator} $P$:
$$
P(f)(x) = \frac{1}{c(x)}R(f)(x)
$$
or
\be\label{eq formula for P}
 P(f)(x) := \frac{1}{c(x)}  \int_V f(y) \; d\rho_x(y) = \int_V f(y) \; 
 P(x, dy)
 \ee
 where  $P(x, dy)$ is the probability measure obtained by normalization
 of $d\rho_x(y)$, i.e. 
 $$
 P(x, dy) := \frac{1}{c(x)}d\rho_x(y).
 $$
 In other words, the Markov operator $P$ defines the measurable 
 field $x \mapsto P(x, \cdot)$ of \textit{transition probabilities} on the
  space  $\VB$, or a \textit{Markov process}.
 
 (iii) The \textit{graph Laplace operator} $\Delta$:
 \be\label{eq def of Delta}
\Delta(f)(x) := \int_V (f(x) - f(y)) \; d\rho_x(y)
\ee
or 
\be\label{eq Delta via R}
\Delta(f) = c(I - P)(f) = (cI  - R)(f).
\ee
Using (\ref{eq c finite}), we can write the operator $\Delta$ in more
symmetric form:
$$
\Delta(f) = R(\mathbbm 1)f  - R(f)
$$
where $\mathbbm 1$ is a function identically equal to $1$,

\end{definition}

\begin{remark}[$R$ as a transfer operator]
It is worth noting that the operator $R$ can be treated as a transfer
 operator (see e.g. \cite{BJ_book} and the  literature cited there). 
 
Let $\sms$ be a standard measure space, and let 
$\sigma$ be a surjective endomorphism  of $X$. Consider  the 
partition  $\xi$ of $X$ into the orbits of $\sigma$: $y \in Orb_\sigma(x)$
if there are non-negative integers $n,m$ such $\sigma^n(y) = 
\sigma^m(x)$. Let  the partition $\eta$ be the  measurable hull of $\xi$. 
 Take the system of conditional  measures $\{\mu_C\}_{C \in \xi}$ 
corresponding to the partition  $\eta$ (see Theorem \ref{thm Simmons}). 

We define a  transfer operator $R$ on the standard 
measure space  $(V, \B, \mu)$ by setting 
\be\label{eq TO via cond syst meas Intro}
R(f)(x) := \int_{C_x} f(y)\; d\mu_{C_x}(y)
\ee
where $C_x$ is the element of $\eta$ containing $x$. The domain of 
$R$ is $L^1(\mu)$ in this example.

As was shown  in \cite{BJ_book},
the operator $R : L^1(\mu) \to L^1(\mu)$ defined by 
(\ref{eq TO via cond syst meas Intro}) is a \textit{transfer operator},
 i.e., it satisfies the relation
$$
R((f\circ\sigma) g)(x) =  f(x) (Rg)(x).
$$

To see that our definition of the operator $R$ given in 
\eqref{eq def of R} agrees with  \eqref{eq TO via cond syst meas Intro},
it suffices to take the measurable partition $\eta$ of $\VtV$ into subsets
$\{\pi_1^{-1}(x) : x \in V\}$ where $\pi_1$ is the projection of $\VtV$
onto $V$. 

\end{remark}

\begin{remark}\label{rem_on operators} 
In this remark we make several  comments about the basic
 properties of the operators $R$, $P$, and $\Delta$.
 
(1) The definition of each of the operators $R$, $P$, and $\Delta$
 depends on a symmetric measure $\rho$, and, strictly speaking,
they must be denoted as  $R(\rho)$, $P(\rho)$, and $\Delta(\rho)$.
Since most of our results are proved for a fixed measure $\rho$, we will
drop this variable. Below in this section, we discuss the relationships 
between $P(\rho)$ and $P(\rho')$ when $\rho$ and $\rho'$ are
 equivalent symmetric measures.

(2) The operators $R$ and $P$ are \textit{positive} in the sense that 
$R(f) \geq 0$ and $P(f) \geq 0$ 
whenever $f \geq 0$. Moreover, if $f = \mathbbm 1$, then
 $P(\mathbbm 1) = \mathbbm 1$ because 
every measure $P(x, \cdot) $ is probability. Hence, $P$  is a
 \textit{Markov operator. }

(3) The properties of the graph Laplace operator $\Delta$ are formulated
in Proposition \ref{prop prop of R, P, Delta}, which is given below. All 
statements from this theorem are proved in \cite{BezuglyiJorgensen2018}.
Other aspects of graph Laplace operators in the context of measure
spaces are discussed in \cite{SmaleZhou2007, SmaleZhou2009}.
 
(4) Since every measure  $\rho$  on $V \times V$  is uniquely determined
 by its values on a dense subset of functions, 
it suffices to define $\rho$ on the set of the so-called ``cylinder  
functions'' $(f \otimes g)(x, y) := f(x)g(y)$. This observation will be used
below when we prove a relation for cylinder functions first. 

(5) In general, a positive operator $R$ in $\FVB$ is called
 \textit{symmetric} if it satisfies the relation:
\be\label{eq symm in terms R}
\int_V f R(g)\; d\mu = \int_V R(f) g\; d\mu,
\ee
for any $f, g \in F(V, \B)$. It turns out that any symmetric operator $R$
defines a symmetric measure $\rho$. Indeed, the functional 
\be\label{eq symm R symm measure}
\rho: (f, g)\ \mapsto \ \int_V f(x) R(g)(x) \; d\mu(x), \qquad f, g \in 
F(V, \B),
\ee
determines a measure on $(V, \B)$ such that 
$$
\rho(A\times B) = \int_V \chi_A(x) R(\chi_B)(x)\; d\mu(x).
$$
As shown in \cite{BezuglyiJorgensen2018}, the operator $R$ is symmetric if 
and only if the measure $\rho$,  defined in 
(\ref{eq symm R symm measure}), is symmetric.
\end{remark}

In Definition \ref{def R, P, Delta}, we do not discuss domains of
the operators $R, P$, and $\Delta$. It depends on the space where an 
operator is considered. In the current paper, we work with $L^2$-Hilbert
 spaces 
defined by the measures $\mu, \nu$, and $\rho$. But the most 
intriguing is the case of the finite energy space Hilbert space $h_E$. 
We discuss the properties of this space as well as those of operators 
$\Delta$ and $P$ acting in $\h_E$ in the forthcoming paper 
 \cite{BezuglyiJorgensen}. On the other hand, we have already 
 proved a number 
 of results about these objects in \cite{BezuglyiJorgensen2018}. 
 We find it useful to give here the definitions and some formulas which 
 are  used below. 

We remark that the finite energy space $\h_E$, see Definition 
\ref{def f.e. space} can be viewed as a generalization of the energy 
space considered for discrete 
 weighted  networks. They have been  extensively studied during last
 decades. 
 
\begin{definition}\label{def f.e. space} Let $(V, \B, \mu) $ be a standard
 measure space with $\sigma$-finite measure $\mu$. Suppose that
 $\rho$ is a symmetric measure on the Cartesian product $(V\times V,
 \B\times \B)$.  We say that a Borel function
$f : V \to \mathbb R$ belongs to the \textit{finite energy space} 
$\mathcal H_E=  \mathcal H_E(\rho) $ if
\be\label{eq def f from H}
\iint_{V\times V}  (f(x) - f(y))^2 \; d\rho(x, y) < \infty.
\ee
\end{definition}

\begin{remark}\label{rem H depends on rho} 
(1) It follows from Definition \ref{def f.e. space} that   $\mathcal H_E$ 
is a vector space containing 
all constant functions. We identify functions $f_1$ and $f_2$ such that
$f_1 - f_2 = const$ and, with some abuse of notation, the quotient space
is also denoted by $\h_E$. So that, we will call elements $f$ of $\h_E$  
functions assuming that a representative of the equivalence class
$f$ is considered. 

(2) Definition \ref{def f.e. space} assumes that a symmetric irreducible 
measure $\rho$ is fixed on $(V\times V, \B\times \B)$. This means that
the space of functions $f$ on $(V, \B)$ satisfying (\ref{eq def f from H})
depends on $\rho$, and, to stress this fact, we will use also the 
notation $\h_E(\rho)$. 
\end{remark}
 
Define the norm in $\h_E$ by setting
\be\label{eq norm in H_E}
|| f ||^2_{\mathcal H_E} := \frac{1}{2} 
\iint_{V\times V}  (f(x) - f(y))^2 \; d\rho(x, y), \quad f \in \mathcal H,  
\ee
As proved in \cite{BezuglyiJorgensen2018}, $\h_E$ is a \textit{Hilbert 
space} with respect to the norm $|| \cdot ||_{\h_E}$. 
\medskip

The description of the  structure of the Hilbert space $\h_E$ is a very 
intriguing problem. We give here a few results proved in 
\cite{BezuglyiJorgensen2018}. 

\begin{theorem}\label{thm_stucture of energy space} 
Let $\rho$ be a symmetric measure on $\vv$ such that $\mu = \rho\circ
\pi_1^{-1}$. Suppose $c(x)= \rho_x(V)$ is locally integrable with respect 
to $\mu$. 

(1) For the measure $d\nu(x) = c(x)d\mu(x)$, we have 
$$
\mathcal D_{\mathrm{fin}}(\mu)  \subset 
\mathcal D_{\mathrm{fin}}(\nu) \subset \h_E.
$$
Moreover, if $A \in \Bfin(\nu)$, then 
\be\label{eq ||chi A||}
|| \chi_A ||^2_{\h_E} = \rho(A \times A^c) \leq \int_A c(x) \; d\mu(x)
= \nu(A),
\ee
where $A^c := V\setminus A$.

(2) For every $A \in \Bfin(\mu)$, one has $\| \chi_A \|_{\h_E}
 = \| \chi_{A^c} \|_{\h_E}$. The function $\chi_A $ is in $\h_E$
 if and only if either $\mu(A) < \infty$ or $\mu(A^c) < \infty$. 

(3) The finite energy space $\h_E$ admits the decomposition 
into the orthogonal sum 
\be\label{eq Royden}
\h = \ol{\mathcal D_{\mathrm{fin}}(\mu)} \oplus \h arm_E
\ee
where the closure of $\Dfin(\mu)$ is taken in the norm of the Hilbert
 space $\h_E$.
\end{theorem}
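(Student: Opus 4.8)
The plan is to handle the three parts in turn, reducing parts~(1) and~(2) to one computation of $\|\chi_A\|_{\h_E}$ and part~(3) to a discrete Green (summation-by-parts) identity. For part~(1), I would begin from the pointwise identity $(\chi_A(x)-\chi_A(y))^2=\chi_{A\times A^c}(x,y)+\chi_{A^c\times A}(x,y)$, valid for every $A\in\B$. Inserting it into the definition~\eqref{eq norm in H_E} of $\|\cdot\|_{\h_E}$ and using the symmetry of $\rho$ (Definition~\ref{def symm measure rho}, equivalently Lemma~\ref{lem symm measure via int}(1)) to identify $\rho(A\times A^c)$ with $\rho(A^c\times A)$ gives $\|\chi_A\|^2_{\h_E}=\rho(A\times A^c)$. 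Disintegrating via~\eqref{eq disint formula} and using~\eqref{eq def of nu} together with Lemma~\ref{lem symm measure via int}(2), one gets $\rho(A\times A^c)\le\rho(A\times V)=\int_A\rho_x(V)\,d\mu=\int_A c\,d\mu=\nu(A)$, which is~\eqref{eq ||chi A||}. Hence $\chi_A\in\h_E$ whenever $\nu(A)<\infty$; since $\Dfin(\nu)=\mathrm{Span}\{\chi_A:A\in\Bfin(\nu)\}$ and $\h_E$ is a vector space (Remark~\ref{rem H depends on rho}(1)), this yields $\Dfin(\nu)\subset\h_E$, and local integrability of $c$ (Assumption~1) gives $\Bfin(\mu)\subset\Bfin(\nu)$, hence $\Dfin(\mu)\subset\Dfin(\nu)$.

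For part~(2), the equality $\|\chi_A\|_{\h_E}=\|\chi_{A^c}\|_{\h_E}$ is immediate: $\chi_{A^c}=\mathbbm{1}-\chi_A$ makes $(\chi_{A^c}(x)-\chi_{A^c}(y))^2=(\chi_A(x)-\chi_A(y))^2$ pointwise, so~\eqref{eq norm in H_E} gives the claim. One direction of the equivalence is already available from~(1): if $\mu(A)<\infty$ then $\chi_A\in\Dfin(\mu)\subset\h_E$, and if $\mu(A^c)<\infty$ then $\chi_{A^c}\in\h_E$ and hence $\chi_A=\mathbbm{1}-\chi_{A^c}\in\h_E$ because constants lie in $\h_E$. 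The reverse direction is the delicate one: from $\chi_A\in\h_E$ we know only that $\rho(A\times A^c)<\infty$, and one must rule out $\mu(A)=\mu(A^c)=\infty$. I would argue by contradiction, invoking the standing hypothesis (Remark~\ref{rem H depends on rho}(2)) that $\rho$ is irreducible: if both $A$ and $A^c$ had infinite $\mu$-measure, irreducibility should force the fiber measures $\rho_x$ to leak across the partition on a set large enough to make $\rho(A\times A^c)=\int_A\rho_x(A^c)\,d\mu(x)$ diverge. Concretely, I would pass to a closed modification of $A$ along the lines of the hull $\widehat B=B\cup\{x:x\to B\}$ used earlier and show that a finite crossing would produce, up to null sets, a proper nonempty closed set, contradicting indecomposability. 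Turning this into a quantitative argument is the step I expect to be the main obstacle, and it is where irreducibility is used essentially.

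For part~(3), the orthogonal splitting itself comes for free: $\h_E$ is a Hilbert space (proved in~\cite{BezuglyiJorgensen2018}) and $\overline{\Dfin(\mu)}$ is a closed subspace, so $\h_E=\overline{\Dfin(\mu)}\oplus(\overline{\Dfin(\mu)})^{\perp}$; the content of~\eqref{eq Royden} is the identification of the complement with the harmonic subspace $\h arm_E$. The key step is the identity, valid for $f\in\h_E$ and $A\in\Bfin(\mu)$,
\[ \langle f,\chi_A\rangle_{\h_E}=\frac{1}{2}\iint_{V\times V}(f(x)-f(y))(\chi_A(x)-\chi_A(y))\,d\rho(x,y)=\int_A\Delta f\,d\mu, \]
obtained by expanding $\chi_A(x)-\chi_A(y)$, folding the two terms together with the symmetry of $\rho$, and disintegrating through~\eqref{eq disint formula} so as to recognize the inner integral as $\Delta f(x)$ from~\eqref{eq def of Delta}. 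Granting it, $f\perp\Dfin(\mu)$ iff $\int_A\Delta f\,d\mu=0$ for all $A\in\Bfin(\mu)$ iff $\Delta f=0$ $\mu$-a.e., i.e.\ $f\in\h arm_E$; and $\h arm_E$ is closed, being an intersection of kernels of the bounded functionals $f\mapsto\int_A\Delta f\,d\mu$ on $\h_E$. The one genuinely delicate point here is legitimizing the interchange of integrals behind the displayed identity: a double use of Cauchy--Schwarz bounds $\int_A\int_V|f(x)-f(y)|\,d\rho_x(y)\,d\mu(x)$ by $\sqrt{2}\,\|f\|_{\h_E}\,\nu(A)^{1/2}$, which is finite exactly because $c\in\Lloc$ forces $\nu(A)<\infty$ on $\Bfin(\mu)$ — so, as in~(1), Assumption~1 is used in an essential way.
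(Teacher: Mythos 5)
The paper does not actually prove Theorem \ref{thm_stucture of energy space} here; it is quoted from \cite{BezuglyiJorgensen2018}, so there is no in-paper argument to compare against. Your treatment of part (1), of the equality $\|\chi_A\|_{\h_E}=\|\chi_{A^c}\|_{\h_E}$, of the ``if'' half of part (2), and of part (3) is correct and uses exactly the devices that recur elsewhere in the paper: the pointwise identity $(\chi_A(x)-\chi_A(y))^2=\chi_{A\times A^c}+\chi_{A^c\times A}$ together with symmetry of $\rho$; and, for (3), folding the two terms of $\langle f,\chi_A\rangle_{\h_E}$ by the flip to get $\int_A\Delta f\,d\mu$ (the same manipulation appears in the proofs of Theorem \ref{prop on G_A} and Proposition \ref{prop J K symm pair}), with Cauchy--Schwarz and $\nu(A)<\infty$ legitimizing the rearrangement. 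Those parts I would accept as written.

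The genuine gap is the ``only if'' half of part (2), and you have correctly flagged it as the step you cannot complete. Unfortunately the situation is worse than a missing quantitative estimate: irreducibility cannot rescue the implication, because it only yields $\rho(A\times A^c)>0$, never $\rho(A\times A^c)=\infty$. Concretely, take $V=\R$, $\mu$ Lebesgue, and $d\rho(x,y)=g(x-y)\,dx\,dy$ with $g$ the standard Gaussian density. Then $\rho$ is symmetric, $\rho_x(dy)=g(x-y)\,dy$, $c\equiv 1$, $\mu=\rho\circ\pi_1^{-1}$, and the kernel is $\mu$-irreducible since $g>0$; yet for $A=(-\infty,0]$ one computes
\[
\|\chi_A\|^2_{\h_E}=\rho(A\times A^c)=\int_{-\infty}^{0}\left(\int_{0}^{\infty}g(x-y)\,dy\right)dx=\int_{-\infty}^{0}(-u)\,g(u)\,du=\tfrac{1}{\sqrt{2\pi}}<\infty,
\]
although $\mu(A)=\mu(A^c)=\infty$. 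So $\chi_A\in\h_E$ while neither $A$ nor $A^c$ has finite measure, and the equivalence in part (2) fails under the hypotheses as transcribed; whatever proof exists in the cited source must rest on assumptions not reproduced in this statement. Your proposed route through closed sets and indecomposability (the hull $\wh B$) establishes only positivity of the crossing mass, so it cannot close this gap, and no argument from the stated hypotheses can. I would either restrict part (2) to settings where such long-range kernels are excluded, or record the ``only if'' direction as requiring an additional hypothesis from \cite{BezuglyiJorgensen2018}.
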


In the following statement we return to the $L^2$-spaces, and following
\cite{BezuglyiJorgensen2018}, we 
formulate a number of  properties of the operators, $R, P$, and 
$\Delta$ that clarify their essence. Here, we focus on the properties of
 these operators related
to $L^2$-spaces. In the next paper \cite{BezuglyiJorgensen},  
we will mostly consider  these operators acting in the finite energy space 
$\h_E$. 

\begin{proposition}\label{prop prop of R, P, Delta}
 Let $d\nu(x) = c(x) d\mu(x)$ be 
the $\sigma$-finite  measure on $(V, \B)$ where $\mu$ and $c(x) = 
\rho_x(V)$ are  defined  as above.
Let the operators $R, P$, and $\Delta$ be defined as in Definition
\ref{def R, P, Delta}. 

(1) Suppose that the function $x \mapsto \rho_x(A) \in L^2(\mu)$ for 
any $A \in \Bfin$.  Then $R$ is a symmetric unbounded operator  in $L^2(\mu)$, i.e.,   
$$
\langle g, R(f) \rangle_{L^2(\mu)} = \langle R(g), f \rangle_{L^2(\mu)}.
$$
If $c \in L^\infty(\mu)$, then $R :  L^2(\mu) \to 
L^2(\mu)$ is a bounded operator, and 
$$
||R||_{L^2(\mu) \to L^2(\mu)} \leq ||c||_{\infty}.
$$

(2) The operator $R : L^1(\nu) \to L^1(\mu)$ is contractive, i.e.,  
$$
||R(f)||_{L^1(\mu)} \leq ||f||_{L^1(\nu)}, \qquad f \in L^1(\nu).
$$
Moreover,   for any function $f \in L^{1} (\nu)$, 
the formula 
\be\label{eq c(x) and rho_x}
\int_V R(f) \; d\mu(x) = \int_V f(x) c(x) \; d\mu(x)
\ee
holds. In other words, $\nu = \mu R$, and 
$$
\frac{d(\mu R)}{d\mu}(x) = c(x).
$$

(3) The bounded operator $P: L^2(\nu) \to L^2(\nu)$ is self-adjoint.
Moreover, $\nu P = \nu$ where $d\nu(x) = c(x) d \mu(x)$. 

(4) The operator $P$ considered in the spaces $L^2(\nu)$ and $L^1(\nu)$ 
is  contractive, i.e., 
$$
|| P(f) ||_{L^2(\nu)} \leq || f ||_{L^2(\nu)}, \qquad || P(f) ||_{L^1(\nu)} \leq 
|| f ||_{L^1(\nu)}.
$$ 

(5) Spectrum of $P$ in $L^2(\nu)$ is a subset of $[-1, 1]$.

(6) The graph Laplace operator $\Delta : L^2(\mu)  \to L^2(\mu) $ 
is a positive definite essentially
self-adjoint operator with domain containing $\Dfin(\mu)$. Moreover, 
$$
|| f||^2_{\h_E} = \int_V f \Delta(f)\; d\mu
$$
when the integral in the right hand side exists. 
\end{proposition}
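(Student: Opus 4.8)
The plan is to establish the six assertions in order, relying on the disintegration formula \eqref{eq disint formula} and the symmetry identity \eqref{eq formula fo symm meas} as the workhorses throughout.

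\textbf{Part (1).} First I would compute, for $f,g\in\Dfin(\mu)$,
\[
\langle g, R(f)\rangle_{L^2(\mu)} = \int_V g(x)\Big(\int_V f(y)\,d\rho_x(y)\Big)\,d\mu(x) = \iint_{V\times V} g(x) f(y)\,d\rho(x,y),
\]
using \eqref{eq disint formula}. Now apply Lemma \ref{lem symm measure via int}(1) (i.e. \eqref{eq formula fo symm meas}) with the function $(x,y)\mapsto g(x)f(y)$ to swap the roles of $x$ and $y$; the resulting integral is $\langle R(g), f\rangle_{L^2(\mu)}$. The hypothesis $x\mapsto\rho_x(A)\in L^2(\mu)$ for $A\in\Bfin$ is exactly what guarantees $R(\chi_A)\in L^2(\mu)$, so $R$ is a densely defined symmetric operator on the domain $\Dfin(\mu)$ (hence closable); to get the full statement one extends by linearity and density. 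For the bounded case, when $c\in L^\infty(\mu)$ I would estimate $|R(f)(x)| = |\rho_x(f)| \le \rho_x(V)\,\|f\|_\infty = c(x)\|f\|_\infty$ pointwise; but to get the $L^2\to L^2$ bound one instead writes, by Cauchy–Schwarz against the probability measure $P(x,\cdot)=c(x)^{-1}d\rho_x$, $|R(f)(x)|^2 \le c(x)^2\int |f(y)|^2 P(x,dy) = c(x)\int|f(y)|^2 d\rho_x(y)$, integrates in $x$, and invokes \eqref{eq disint formula} together with symmetry and the marginal relation $\nu=\mu R$ to bound the result by $\|c\|_\infty^2\|f\|_{L^2(\mu)}^2$.

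\textbf{Parts (2)–(5).} For (2): $\int_V |R(f)|\,d\mu \le \int_V \rho_x(|f|)\,d\mu(x) = \iint |f(y)|\,d\rho(x,y) = \int_V |f(y)| c(y)\,d\mu(y) = \|f\|_{L^1(\nu)}$, where the middle step is \eqref{eq disint formula} applied to $(x,y)\mapsto|f(y)|$ and then integrating out $x$ against $\rho_x(V)=c(x)$ — wait, more precisely one uses symmetry to turn $\iint|f(y)|\,d\rho$ into $\iint|f(x)|\,d\rho = \int|f(x)|c(x)\,d\mu$; taking $f\ge 0$ without absolute values gives \eqref{eq c(x) and rho_x}, i.e. $\mu R=\nu$ and $d(\mu R)/d\mu=c$. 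For (3): since $P(f)=c^{-1}R(f)$, the identity $\int g\,P(f)\,d\nu = \int g\,R(f)\,d\mu = \int R(g)\,f\,d\mu = \int P(g)\,f\,d\nu$ (again by Part (1)'s symmetry of $R$ against $\mu$, now paired with $d\nu=c\,d\mu$) gives self-adjointness on $L^2(\nu)$; $P(\mathbbm 1)=\mathbbm 1$ dualizes to $\nu P=\nu$. For (4): Jensen's inequality against the probability kernel $P(x,\cdot)$ gives $|P(f)(x)|^p\le P(|f|^p)(x)$ pointwise for $p=1,2$; integrating against $\nu$ and using $\nu P=\nu$ yields the contraction. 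For (5): combine self-adjointness (spectrum real) with the $L^2(\nu)$-contraction (spectral radius $\le 1$).

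\textbf{Part (6).} This is the one I expect to be the main obstacle, because it concerns an unbounded, merely essentially self-adjoint operator and an energy identity whose right-hand side need not exist a priori. The strategy: first verify on $\Dfin(\mu)$ that $\Delta=cI-R$ maps $\Dfin(\mu)$ into $L^2(\mu)$ (using the $L^2$-hypothesis on $x\mapsto\rho_x(A)$ and local integrability of $c$). Positivity and the energy identity come from the computation
\[
\int_V f\,\Delta(f)\,d\mu = \iint_{V\times V} f(x)\big(f(x)-f(y)\big)\,d\rho(x,y),
\]
then symmetrizing via \eqref{eq formula fo symm meas}: averaging this with the $x\leftrightarrow y$ version produces $\tfrac12\iint (f(x)-f(y))^2\,d\rho = \|f\|_{\h_E}^2 \ge 0$, which is exactly the claimed formula and shows $\Delta\ge 0$ on $\Dfin(\mu)$. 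Essential self-adjointness is the delicate point: I would argue that $\Delta$ with domain $\Dfin(\mu)$ is symmetric and nonnegative, and that its deficiency spaces are trivial — either by exhibiting $\Dfin(\mu)$ as a core via the already-established density of simple functions in $L^2(\mu)$ together with a cutoff/approximation argument using the $\sigma$-finiteness of $\mu$, or by citing the corresponding result from \cite{BezuglyiJorgensen2018} as the excerpt's Remark \ref{rem_on operators}(3) permits. The subtlety is controlling $R(f)$ for $f$ in the closure of the domain when $c$ is unbounded; here one leans on $c\in L^1_{\mathrm{loc}}(\mu)$ and, where needed, $c\in\lc2$, to keep all integrals finite on sets of finite measure and pass to the limit.
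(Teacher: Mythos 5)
The paper does not actually prove this proposition in the text you were given: Remark \ref{rem_on operators}(3) and the sentence introducing the statement defer all six assertions to \cite{BezuglyiJorgensen2018}, so there is no in-paper argument to compare against. Judged on its own merits, your treatment of (1)--(5) is correct and is the standard route: the symmetry of $R$ against $\mu$ is exactly the identity $\iint g(x)f(y)\,d\rho=\iint g(y)f(x)\,d\rho$ obtained from Lemma \ref{lem symm measure via int} and the disintegration \eqref{eq disint formula}; the hypothesis $x\mapsto\rho_x(A)\in L^2(\mu)$ gives the dense domain; your Cauchy--Schwarz estimate against the probability kernel $P(x,\cdot)$ plus one application of symmetry yields $\|R\|_{L^2(\mu)\to L^2(\mu)}\le\|c\|_\infty$; the $L^1$ contraction and $\mu R=\nu$, the self-adjointness of $P$ on $L^2(\nu)$ via $d\nu=c\,d\mu$, the Jensen contraction for $p=1,2$ combined with $\nu P=\nu$, and the spectral conclusion all follow as you indicate.

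The one genuine gap is essential self-adjointness in (6). Your symmetrization does give positivity and the energy identity $\int_V f\Delta(f)\,d\mu=\|f\|^2_{\h_E}$, hence a nonnegative symmetric operator on $\Dfin(\mu)$; but a nonnegative symmetric operator need not be essentially self-adjoint (its deficiency space $\ker(\Delta^*+I)$ can be nontrivial), and neither the density of $\Dfin(\mu)$ in $L^2(\mu)$ nor a generic cutoff argument supplies the missing step --- density of the domain is not the same as the domain being a core. One must actually exhibit $\ker(\Delta^*+I)=\{0\}$, for instance by a Green's-identity or maximum-principle argument adapted to this kernel, and your proposal only gestures at this before falling back on citing \cite{BezuglyiJorgensen2018} --- which is, in fairness, exactly what the paper itself does for the entire proposition.
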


\begin{definition}\label{def_harmonic}
A  function $f\in \FVB$ is called \textit{harmonic}, if $Pf = f$.
Equivalently, $f$ is harmonic if $\Delta f = 0$ or $R(f) = cf$. Similarly, 
$h$ is \textit{harmonic} for a kernel $x\to k(x, \cdot)$ if 
$$
\int_V h(y) \; k(x, dy) = h(x).
$$
\end{definition}

\textit{Question}: 
As was mentioned above, the definition of operators $R(\rho), P(\rho)$,
 and $\Delta(\rho)$ is based
on a symmetric measure $\rho$ defined on $\vv$. Suppose that another 
symmetric measure, $\rho'$,  which is equivalent to 
$\rho$, is defined on $\vv$.  It would be interesting to find out 
what  relations between $(R(\rho), P(\rho), 
\Delta(\rho))$ and $(R(\rho'), P(\rho'), \Delta(\rho'))$ exist.
 Possibly, 
this question can be made more precise if we require that both $\rho$ and
$\rho'$ are supported by the same symmetric set $E$ and disintegrated
with respect to the same measure $\mu$ on $\VB$.
\\

\begin{remark} In our further results, the following sets of functions will play an important
 role. Let $\sms$ be a $\sigma$-measure space, and $\rho$ a symmetric
  measure  on $\vv$ satisfying Assumption 1. Then the measure 
  $d\nu(x) = c(x)  d\mu(x)$ is  on $\sms$ is equivalent to $\mu$ where
  $c(x) = R(\mathbbm 1)(x)$. We define $\Dfin(\mu)$ as in 
(\ref{eq Dfin}), and, similarly, we set 
$$
\Bfin(\nu) := \{ A\in \B : \nu(A) < \infty\},
$$
$$
\Dfin(\nu) : = \mathrm{Span}\{ \chi_A : A \in \Bfin(\nu)\}.
$$
It is straightforward to  check that Assumption 1 implies 
$$
\Dfin(\mu) \subset \Dfin(\nu).
$$
In general, the converse does not hold. But these two sets coincide if and
 only if Assumption 1 is extended by adding  the reverse implication
$$
\int_A c(x) \; d\mu(x) \ \Longrightarrow \ \mu(A) < \infty.
$$
\end{remark}

\subsection{Embedding operator J}\label{subsect J}
We define now a natural embedding $J$ of bounded Borel functions over 
$(V, \B)$ into bounded Borel functions over $\vv$. The operator $J$ will
 be considered later acting on the corresponding $L^2$-spaces.
 
 Let
\be\label{eq embedd J}
(Jf)(x, y) = f(x), \ \ \  \ f \in \FVB.
\ee
If $\VB$ is equipped with a $\sigma$-finite measure $\mu$ (or 
$\nu = c\mu$), we can specify
$J$ as an operator with domain $L^2(\mu)$ or $L^2(\nu)$). 

\begin{theorem}\label{prop embedd J}
For given $\sms$, let $\rho$  be a symmetric measure $\rho$ on $\vv$
and $c(x) = \rho_x(V)$.  
Then: 

(1) the operator $J : L^2(\nu) \to L^2(\rho)$ is an isometry where
$d\nu(x) = c(x) d\mu(x)$;

(2) the co-isometry $J^* : L^2(\rho) \to L^2(\nu)$ acts by the formula
$$
(J^*g)(x) = \int_V g(x,y)\; P(x, dy), \qquad g\in L^2(\rho);
$$

(3) the operator $J : L^2(\mu) \to L^2(\rho)$ is densely defined (in 
$L^2(\mu)$) and is, in general, unbounded. 

\end{theorem}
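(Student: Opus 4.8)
The plan is to verify the three claims by direct computation with the disintegration formula \eqref{eq disint formula}. For part (1), I would compute the $L^2(\rho)$-norm of $Jf$ for $f \in \FVB \cap L^2(\nu)$:
\[
\| Jf \|^2_{L^2(\rho)} = \iint_{V\times V} |f(x)|^2 \, d\rho(x,y) = \int_V \left( \int_V |f(x)|^2 \, d\rho_x(y) \right) d\mu(x) = \int_V |f(x)|^2 \rho_x(V) \, d\mu(x) = \int_V |f(x)|^2 c(x)\, d\mu(x),
\]
which equals $\| f \|^2_{L^2(\nu)}$ since $d\nu = c\, d\mu$ by Lemma \ref{lem symm measure via int}(2). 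Thus $J$ preserves norms on a dense subspace of $L^2(\nu)$ (using that $\Dfin(\nu)$, or bounded functions, is dense), so it extends uniquely to an isometry $L^2(\nu) \to L^2(\rho)$.

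For part (2), I would identify the adjoint by pairing: for $f \in L^2(\nu)$ and $g \in L^2(\rho)$,
\[
\langle Jf, g \rangle_{L^2(\rho)} = \iint_{V\times V} f(x)\, \overline{g(x,y)} \, d\rho(x,y) = \int_V f(x) \left( \int_V \overline{g(x,y)} \, d\rho_x(y) \right) d\mu(x) = \int_V f(x)\, \overline{c(x)^{-1} \rho_x(g)}\ c(x)\, d\mu(x),
\]
where in the last step I multiplied and divided by $c(x)$ to convert the $d\mu$ integral into a $d\nu$ integral. Recognizing $c(x)^{-1}\int_V g(x,y)\, d\rho_x(y) = \int_V g(x,y)\, P(x,dy)$ from \eqref{eq formula for P}, this shows $\langle Jf, g\rangle_{L^2(\rho)} = \langle f, J^*g \rangle_{L^2(\nu)}$ with $(J^*g)(x) = \int_V g(x,y)\, P(x,dy)$. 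One should check $J^*g \in L^2(\nu)$: by Jensen/Cauchy–Schwarz applied to the probability measure $P(x,\cdot)$, $|(J^*g)(x)|^2 \le \int_V |g(x,y)|^2 P(x,dy)$, and integrating against $d\nu = c\,d\mu$ gives $\|J^*g\|^2_{L^2(\nu)} \le \iint |g|^2\, d\rho = \|g\|^2_{L^2(\rho)}$, so $J^*$ is a well-defined contraction, confirming it is the genuine adjoint (and the co-isometry, since $J$ is isometric).

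For part (3), the identity $\|Jf\|^2_{L^2(\rho)} = \int_V |f|^2 c\, d\mu$ from part (1) holds verbatim for $f \in \Dfin(\mu)$, and $\Dfin(\mu)$ is dense in $L^2(\mu)$, so $J$ is densely defined as an operator from $L^2(\mu)$. For unboundedness, I would exhibit $f \in L^2(\mu)$ with $\int_V |f|^2 c\, d\mu = \infty$, which is possible precisely because $c$ is not assumed bounded (indeed the paper notes the bounded-$c$ case is the uninteresting one): e.g. choose a sequence of disjoint sets $A_n \in \Bfin(\mu)$ on which $c$ is large, and form $f = \sum_n a_n \chi_{A_n}$ with $a_n$ chosen so that $\sum a_n^2 \mu(A_n) < \infty$ but $\sum a_n^2 \int_{A_n} c\, d\mu = \infty$; normalizing the restrictions $f_n = a_n\chi_{A_n}$ gives unit vectors in $L^2(\mu)$ with $\|Jf_n\|_{L^2(\rho)} \to \infty$. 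The only mild obstacle is the bookkeeping of domains — making sure the dense subspace on which the isometry identity is first verified (bounded functions, or $\Dfin(\nu)$) is genuinely a core, and that the formula for $J^*$ is checked on all of $L^2(\rho)$ and not merely on cylinder functions — but these are routine density arguments given the machinery already set up in Section \ref{sect Prelim}.
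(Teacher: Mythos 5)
Your proposal is correct and follows essentially the same route as the paper: the same disintegration computation for the isometry, the same pairing argument identifying $J^*g = \int_V g(x,y)\,P(x,dy)$, and the same observation that $\|Jf\|^2_{L^2(\rho)} = \int_V f^2 c\,d\mu$ yields dense definedness on $\Dfin(\mu)$ and unboundedness for unbounded $c$. Your added checks (the Jensen/Cauchy--Schwarz bound showing $J^*$ is a contraction, and the explicit sequence witnessing unboundedness) are sound refinements of details the paper leaves implicit.
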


\begin{proof}
(1) This fact is proved by the following computation: for any 
$f \in L^2(\nu)$, one has
 $$
 \ba 
 || (Jf) ||^2_{L^2(\rho)} = & \iint_{V\times V} (Jf)^2(x, y) \; d\rho(x, y)\\
 =&  \iint_{V\times V} f^2(x) \; d\rho_x (y)d\mu(x)\\
 =&   \int_{V} f^2(x) c(x)\; d\mu(x)\\
 =&  || f ||^2_{L^2(\nu)}.
 \ea
 $$
 
 (2) To find the co-isometry $J^*$, we take arbitrary functions $f \in
  L^2(\nu)$ and $g\in L^2(\rho)$ and compute the inner product 
  using the  equality $c(x) P(x, dy) = d\rho_x(y)$:
 $$
 \ba 
 \langle Jf, g\rangle_{L^2(\rho)} = & \iint_{\VtV} (Jf)(x, y) g(x, y)\;
 d\rho(x, y)\\
= & \int_{V} f(x) \left(\int_{V}  g(x, y)\; d\rho_x(y)\right) d\mu(x)\\
= & \int_{V} f(x) \left(\int_{V}  g(x, y)\; P(x, dy)\right) d\nu(x)\\
= &\langle f, J^*g\rangle_{L^2(\nu)},
 \ea
 $$
where $J^*g =\int_{V}  g(x, y)\; P(x, dy)$.  This proves (2).
 
 (3) To show that (3) holds, we take a Borel function $f \in L^2(\mu)$ and 
 note that 
 \be\label{eq_J in mu-space}
 || Jf ||^2_{L^2(\rho)} = \iint_{\VtV} f^2(x) \; d\rho_xd\mu(x) = 
  \int_V f^2(x)c(x) \; d\mu(x).
 \ee
 In particular,  we have, for $A\in \Bfin$,
 $$
  || J(\chi_A) ||^2_{L^2(\rho)} = \int_A c(x) \; d\mu(x),
 $$
 that is, assuming that $c$ is locally integrable, we see that 
 $J$ is well defined on a dense subset of $L^2(\mu)$.
 Formula (\ref{eq_J in mu-space}) shows that,  for general $c$, the operator
 $J : L^2(\mu) \to L^2(\rho)$ is not  bounded.   
 \end{proof}

\section{Equivalence of  symmetric measures}\label{sect equivalence}
In this section we focus on the question about relations of Markov 
operators, and Laplacians, arising from equivalent symmetric measures.

\subsection{Equivalence of Markov operators}
Let $\rho$ be a symmetric measure on $\vv$ which is disintegrated
by fiber measures $x \mapsto \rho_x$ over the measure $\mu = 
\rho\circ \pi^{-1}$.  As above, define transition probabilities  $x \mapsto 
 P(x, \cdot)$ by setting $c(x)^{-1} d\rho_x(\cdot) = P(x, \cdot)$ where 
 $c(x) = \rho_x(V)$. In other words, $P(x, A) = P(\chi_A)(x)$ where 
 $P$ is the Markov operator, see \eqref{eq formula for P}. 

Having the operator $P$ defined, one can construct a stationary Markov 
process. Let $\Omega = V \times V \times V \times \cdots = 
V^{\mathbb  N_0}$. For $\omega = (\omega_n)\in \Omega$, set 
$$
X_n : \Omega \to V : X_n(\omega) = \omega_n, \qquad n\in \N_0.
$$

These notions are studied in detail in Section \ref{sect Transient}. 
Here we mention only the notion of \textit{reversibility},  one of the most
 important properties of Markov operators (processes). 
 
 \begin{definition}\label{def reversible MP} 
(1) A kernel $x \mapsto k(x, \cdot)$ is called \textit{reversible} with 
respect to a measure $\mu$ on $\VB$, if for any bounded Borel function
 $f(x,y)$,
 $$
 \iint_{\VtV} f(x, y) k(x, dy)d\mu(x) = \iint_{\VtV} f(y, x) 
 k(x, dy)d\mu(x). 
 $$
 
(2) Suppose that $x \mapsto P(x, \cdot )$ is a measurable family of
 transition probabilities on the space $\sms$, and let $P$ be the  Markov 
 operator determined by $x \mapsto P(x, \cdot )$. It is said that the 
 corresponding Markov process  is \textit{reversible} with respect to a 
measurable functions $c: V \to  (0, \infty)$ if, for any sets $A, B \in \B$,
  the following relation holds:
\be\label{eq def reversible P}
 \int_B c(x) P(x, A)\; d\mu(x) = \int_A c(x) P(x, B)\; d\mu(x).
\ee

Denoting $d\nu(x) = c(x)d\mu(x)$, we can rewrite 
\eqref{eq def reversible P} in the form that will be used below.
$$
\int_V \chi_B(x) P(x, A) \; d\nu(x) = \int_V \chi_A(x) P(x, B) \; 
d\nu(x).
$$
\end{definition} 

The following result clarifies relationship between symmetric measures 
$\rho$ and reversible Markov processes. This lemma is a part of
more general statement, see Theorem \ref{prop_reversible}.

\begin{lemma}\label{lem sym vs revers}
Let $\rho= \int_V\rho_x \; d\mu$ be a measure on $\vv$ such that 
$c(x) = \rho_x(V) <\infty$. Suppose that the Markov operator $P$
is defined according to \eqref{eq formula for P}. Then the following
are equivalent:

(i) $\rho$ is symmetric;

(ii) $ (P,c)$ is reversible.
\end{lemma}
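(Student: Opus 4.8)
The plan is to prove the equivalence (i)$\Leftrightarrow$(ii) by translating each side into an integral identity over $\vv$ and showing that both sides are literally the same identity once we unwind the disintegration $\rho = \int_V \rho_x\, d\mu(x)$ and the relation $d\rho_x(y) = c(x)\,P(x,dy)$. First I would rewrite the reversibility condition \eqref{eq def reversible P}: using $d\nu(x) = c(x)\,d\mu(x)$ and $P(x,A) = P(\chi_A)(x)$, the left-hand side of \eqref{eq def reversible P} becomes
$$
\int_V \chi_B(x)\, P(x,A)\, c(x)\, d\mu(x) = \int_V \chi_B(x) \left(\int_V \chi_A(y)\, c(x)\, P(x,dy)\right) d\mu(x) = \iint_{\VtV} \chi_B(x)\chi_A(y)\, d\rho_x(y)\, d\mu(x),
$$
which by the disintegration formula \eqref{eq disint formula} equals $\iint_{\VtV} \chi_B(x)\chi_A(y)\, d\rho(x,y) = \rho(B\times A)$. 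Symmetrically, the right-hand side of \eqref{eq def reversible P} equals $\rho(A\times B)$.

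Next I would observe that, after this computation, (ii) says exactly $\rho(B\times A) = \rho(A\times B)$ for all $A, B\in\B$, which is verbatim the definition of $\rho$ being symmetric (Definition \ref{def symm measure rho}). So (i)$\Leftrightarrow$(ii) is immediate. For the more general bounded-Borel-function statement, I would note that the reversibility of the kernel in the sense of Definition \ref{def reversible MP}(1) is equivalent to \eqref{eq formula fo symm meas} of Lemma \ref{lem symm measure via int}: writing $k(x,dy) = P(x,dy)$ against the measure $d\nu(x) = c(x)\,d\mu(x)$, the identity
$$
\iint_{\VtV} f(x,y)\, c(x)\, P(x,dy)\, d\mu(x) = \iint_{\VtV} f(x,y)\, d\rho(x,y)
$$
turns the kernel-reversibility identity into $\iint f(x,y)\,d\rho = \iint f(y,x)\,d\rho$, which is precisely symmetry of $\rho$ by Lemma \ref{lem symm measure via int}(1). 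This handles the passage from indicator functions to general bounded Borel $f$ via the usual density argument (simple functions are dense, and both sides are $\sigma$-additive, using Remark \ref{rem_on operators}(4) on cylinder functions).

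There is essentially no hard step here: the content is purely the bookkeeping of disintegration, and the only point requiring a little care is making sure the $\sigma$-finiteness hypotheses let us apply Theorem \ref{thm Simmons} and Fubini-type interchanges freely. The one mild subtlety is that the reversibility of the \emph{process} in Definition \ref{def reversible MP}(2) is stated only for indicator functions (sets $A,B$), whereas symmetry of $\rho$ is tested on the cylinder functions $\chi_A\otimes\chi_B$; since these span a dense subspace and $\rho$ is determined by its values on finite-measure rectangles (as noted after \eqref{eq Bfin}), no extra work is needed. Thus I expect the proof to be short: convert \eqref{eq def reversible P} to the rectangle identity $\rho(B\times A)=\rho(A\times B)$ via \eqref{eq disint formula}, and read off the equivalence with Definition \ref{def symm measure rho}.
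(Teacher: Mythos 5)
Your proposal is correct and is essentially the intended argument: the paper itself gives no proof of Lemma \ref{lem sym vs revers}, deferring to Theorem \ref{prop_reversible} (equivalence of (i) and (vi)) and to \cite{BezuglyiJorgensen2018}, and your computation --- unwinding $c(x)P(x,dy)=d\rho_x(y)$ and the disintegration \eqref{eq disint formula} to identify both sides of \eqref{eq def reversible P} with $\rho(B\times A)$ and $\rho(A\times B)$ --- is exactly that equivalence. The extension to general bounded Borel $f$ is a harmless bonus; it is not needed since both Definition \ref{def symm measure rho} and \eqref{eq def reversible P} are stated on sets.
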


In what follows, we will focus on the following \textit{question}: 
suppose that $\rho $
and $\rho'$  are two \textit{equivalent} symmetric measures such that 
the corresponding Markov processes $(P,c)$ and $(P',c')$ are reversible.
How are they related? More generally, we can ask about relations between 
all objects whose definition was based on a symmetric measure. They are
the Laplacian $\Delta$, symmetric operator $R$, and finite energy
Hilbert space. Some partial answers are given in this and  subsequent 
sections.

\begin{definition}\label{def_MP equivalence}
Let $(P, c)$ be a pair consisting of a positive measurable function $c(x)$
on $\sms$ and a reversible Markov process $P(x, \cdot)$ satisfying 
Definition \ref{def reversible MP}. We will say that two such pairs
$(P, c)$ and $(P',c')$ are \textit{equivalent} if the corresponding 
symmetric 
measures $\rho$ and $\rho'$ are equivalent as measures on $\vv$ (see
Theorem \ref{prop_reversible}). 
The latter  means that there exists a positive measurable function $r(x, y)$
 such that
$$
d\rho'(x, y) = r(x, y) d\rho(x, y).
$$

If the equivalent measures $\rho$ and $\rho'$ satisfy the property 
$\mu = \rho\circ\pi_1^{-1}
= \rho'\circ\pi_1^{-1}$, then we call the pairs $(P, c)$ and $(P',c')$
\textit{strongly equivalent.} In this case, we also call the  measures
$\rho$ and $\rho'$ \textit{strongly equivalent}.
\end{definition}

\begin{remark}\label{rem equiv measures}
 (1) The symmetry of equivalent measures $\rho$ and 
$\rho'$ implies that the function $r(x, y)$ is symmetric, $r(x, y) = r(y,x)$.

(2) Let the measures $\rho$ and $\rho'$  be strongly equivalent.
Then these measures are disintegrated as follows:
$$
\rho' = \int_V \rho'_x\ d\mu(x), \qquad \rho = \int_V \rho_x\ d\mu(x).
$$
It can be seen that the equivalence of $\rho$ and $\rho'$ implies
that the measures $\rho_x$ and $\rho'_x$ are equivalent $\mu$-a.e.
Moreover, 
\be\label{eq_RN for every x}
\frac{d\rho'_x}{d\rho_x}(y) = r_x(y)
\ee
where $r_x(\cdot)$ is obtained from $r(x, \cdot)$ by fixing the variable
 $x$. 

(3) Conversely, given two  (strongly) equivalent measures $\rho$ and 
$\rho'$, we can construct  (strongly) equivalent pairs $(P,c)$ and 
$(P',c')$ according to the properties formulated in Lemma \ref{lem sym vs
revers} and Theorem
\ref{prop_reversible}. In other words, if $(P,c)$ defines a  reversible 
Markov process with the symmetric measure $\rho$, then, for any 
symmetric measure $\rho'$ equivalent to $\rho$, we can construct a 
reversible Markov process $(P', c')$ which is equivalent to $(P,c)$. 
Note that the functions $c(x) = \rho_x(V)$ and 
$c'(x) = \rho'_x(V)$ are determined by $\rho$ and $\rho'$ uniquely.
\end{remark}

One can prove a more general statement than that given in Remark 
\ref{rem equiv measures} (2).

\begin{lemma} Let $\rho$ and $\rho'$ be two symmetric measures on 
$\vv$ such that $d\rho'(x, y) = r(x, y) d\rho(x, y)$. Suppose that 
$$
\rho' = \int_V \rho'_x\ d\mu'(x), \qquad \rho = \int_V \rho_x\ d\mu(x)
$$
and the measures $\mu$ and $\mu'$ on $\VB$ are equivalent,
i.e., $m(x) d\mu'(x) =  d\mu(x)$ for some positive Borel function $m(x)$.
Then the measures $\rho'_x$ and $\rho_x$ are equivalent a.e. on $V$, and
\be\label{eq-RN rho and rho'}
\frac{d\rho'_x}{d\rho_x}(y) = m(x) r_x(y).
\ee
\end{lemma}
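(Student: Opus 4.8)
The plan is to reduce the statement to a change-of-variables identity for disintegrations and then match the two representations of $\rho'$. First I would write down, for an arbitrary nonnegative (or bounded) Borel function $f(x,y)$ on $\VtV$, the disintegration of $\rho'$ in the given form:
$$
\iint_{\VtV} f(x,y)\; d\rho'(x,y) = \int_V \left(\int_V f(x,y)\; d\rho'_x(y)\right) d\mu'(x).
$$
On the other hand, using $d\rho'(x,y) = r(x,y)\, d\rho(x,y)$ together with the disintegration of $\rho$ over $\mu$, I would compute
$$
\iint_{\VtV} f(x,y)\; d\rho'(x,y) = \iint_{\VtV} f(x,y) r(x,y)\; d\rho(x,y) = \int_V \left(\int_V f(x,y) r_x(y)\; d\rho_x(y)\right) d\mu(x),
$$
where $r_x(\cdot) = r(x,\cdot)$ as in Remark \ref{rem equiv measures}. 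Now I substitute $d\mu(x) = m(x)\, d\mu'(x)$ to rewrite this last expression as $\int_V \big(\int_V f(x,y) m(x) r_x(y)\; d\rho_x(y)\big)\, d\mu'(x)$.

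The second step is to invoke uniqueness. Both computations exhibit $\rho'$ disintegrated over the \emph{same} base measure $\mu'$, namely
$$
\rho' = \int_V \rho'_x\; d\mu'(x) = \int_V \big(m(x)\, r_x \cdot \rho_x\big)\; d\mu'(x),
$$
where $m(x) r_x\cdot\rho_x$ denotes the measure $A \mapsto \int_A m(x) r_x(y)\; d\rho_x(y)$. Since $\rho'\circ\pi_1^{-1} \ll \mu'$ (which holds because $\rho'\circ\pi_1^{-1}\ll\mu$ and $\mu\sim\mu'$), Theorem \ref{thm Simmons} gives a \emph{unique} system of conditional measures, so for $\mu'$-a.e.\ $x$ we must have $d\rho'_x(y) = m(x) r_x(y)\, d\rho_x(y)$, which is exactly \eqref{eq-RN rho and rho'}. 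The equivalence of $\rho'_x$ and $\rho_x$ a.e.\ then follows since $m(x) > 0$ and $r_x(y) > 0$ a.e.

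The main obstacle I anticipate is purely measure-theoretic bookkeeping rather than any conceptual difficulty: one must check that the function $(x,y)\mapsto m(x) r_x(y)$ is jointly measurable so that $A\mapsto \int_A m(x) r_x(y)\, d\rho_x(y)$ is genuinely a measurable field of measures in the sense required to apply the uniqueness clause of Theorem \ref{thm Simmons}. Joint measurability of $r$ is given, and $m$ is Borel, so the product is Borel; then Fubini/Tonelli for the disintegrated measure legitimizes all the interchanges of integration above (everything is nonnegative, so Tonelli applies with values in $\ol\R$, and the $\sigma$-finiteness of $\rho$, $\rho'$, $\mu$, $\mu'$ keeps the conditional measures $\sigma$-finite). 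One should also note that null sets are handled implicitly by the paper's standing mod-$0$ convention: the identity \eqref{eq-RN rho and rho'} holds for $\mu'$-a.e.\ $x$, equivalently $\mu$-a.e.\ $x$ since the two measures share the same null sets.
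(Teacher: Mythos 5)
Your proposal is correct and follows essentially the same route as the paper's own (sketched) proof: compute $\rho'$ in two ways, once via $d\rho' = r\,d\rho$ with the disintegration of $\rho$ over $\mu$ and the substitution $d\mu = m\,d\mu'$, and once via the given disintegration of $\rho'$ over $\mu'$, then identify the conditional measures. Your explicit appeal to the uniqueness clause of Theorem \ref{thm Simmons} and the measurability check are just a more careful rendering of the paper's ``comparing the above formulas'' step.
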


\begin{proof} (Sketch) The result is deduced as follows:
$$
\ba
\rho'(A\times B) = &\ \iint_{A\times B} r(x, y) \; d\rho(x,y)\\
 = & \ \iint_{A\times B} r(x, y) \; d\rho_x(y) d\mu(x)\\
 = &\ \int_{A} \left( \int_B m(x) r(x, y) \; d\rho_x(y) \right) d\mu'(x).
\ea
$$
On the other hand, 
$$
\rho'(A\times B) = \int_A \rho'_x(B) \; d\mu'(x).
$$
Comparing the above formulas, we obtain that (\ref{eq-RN rho and rho'})
holds.

\end{proof}

Consider a particular case when the Radon-Nikodym derivative $r(x, y)$ of 
two equivalent measures $\rho$ and $\rho'$ is the product $p(x)q(y)$.

\begin{lemma}\label{lem r(x,y) = r(x)r(y)} Let $\rho = \int \rho_x \; 
d\mu(x)$ and $\rho' = \int \rho'_x \; d\mu'(x)$ be two measures on $\vv$
such that 
$$
\frac{d\rho'}{d\rho}(x, y) = p(x)q(y)
$$
for some  positive Borel functions $p$ and $q$. Then, for $\mu$-a.e. 
$x \in V$, the Radon-Nikodym derivative 
$\dfrac{d\rho'_x(y)}{d\rho_x(y)}$ satisfies the relation
\be\label{eq_product RN der}
\frac{1}{q(y)}\frac{d\rho'_x(y)}{d\rho_x(y)} = \varphi(x)
\ee
where 
$$
\varphi(x) = p(x) \frac{d\mu}{d\mu'}(x).
$$
\end{lemma}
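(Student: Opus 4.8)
The plan is to obtain \eqref{eq_product RN der} from the preceding lemma together with a one-line computation, since the case $r(x,y)=p(x)q(y)$ is just a specialization. First I would record that the notation $\frac{d\mu}{d\mu'}$ presupposes $\mu\sim\mu'$; this is automatic under the standing convention, because $p,q>0$ forces $d\rho'=p(x)q(y)\,d\rho$ to be an equivalence $\rho'\sim\rho$, whence the first marginals satisfy $\mu=\rho\circ\pi_1^{-1}\sim\rho'\circ\pi_1^{-1}=\mu'$. Writing $m(x):=\frac{d\mu}{d\mu'}(x)$, so $d\mu(x)=m(x)\,d\mu'(x)$, I would then apply the preceding lemma (whose proof uses only the disintegration hypotheses) with $r(x,y):=p(x)q(y)$, for which $r_x(y)=p(x)q(y)$. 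It gives $\rho'_x\sim\rho_x$ for $\mu$-a.e.\ $x$ and $\frac{d\rho'_x}{d\rho_x}(y)=m(x)\,r_x(y)=m(x)p(x)q(y)$. Dividing through by $q(y)>0$ yields $\frac{1}{q(y)}\frac{d\rho'_x(y)}{d\rho_x(y)}=m(x)p(x)=p(x)\frac{d\mu}{d\mu'}(x)=\varphi(x)$, which is exactly \eqref{eq_product RN der}.

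For a self-contained argument I would instead compare the two disintegrations of $\rho'$ directly. Using the disintegration identity \eqref{eq disint formula}, for any $A,B\in\B$,
\[
\rho'(A\times B)=\iint_{A\times B}p(x)q(y)\,d\rho(x,y)=\int_A p(x)\left(\int_B q(y)\,d\rho_x(y)\right)d\mu(x),
\]
while on the other hand
\[
\rho'(A\times B)=\int_A\rho'_x(B)\,d\mu'(x)=\int_A\frac{1}{m(x)}\,\rho'_x(B)\,d\mu(x).
\]
Since $A\in\B$ is arbitrary, for each fixed $B$ there is a $\mu$-conull set on which $\frac{1}{m(x)}\rho'_x(B)=p(x)\int_B q(y)\,d\rho_x(y)$.

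The main subtlety — and essentially the only content — is upgrading this ``for each $B$, for $\mu$-a.e.\ $x$'' statement to ``for $\mu$-a.e.\ $x$, for all $B\in\B$'', since a priori the exceptional null set depends on $B$. Here I would use that $(V,\B)$ is standard Borel, hence $\B$ is countably generated: fix a countable algebra $\mc A_0$ generating $\B$, obtain the identity simultaneously for all $B\in\mc A_0$ off a single $\mu$-null set $N$, and note that for $x\notin N$ both $B\mapsto\rho'_x(B)$ and $B\mapsto m(x)p(x)\int_B q\,d\rho_x$ are $\sigma$-finite measures agreeing on $\mc A_0$, hence on $\B$ by the uniqueness theorem for $\sigma$-finite measures. (This is precisely the point handled, in sketch form, in the proof of the preceding lemma.) For $x\notin N$ this reads $\frac{d\rho'_x}{d\rho_x}(y)=m(x)p(x)q(y)$, and dividing by $q(y)$ gives \eqref{eq_product RN der} with $\varphi(x)=m(x)p(x)=p(x)\frac{d\mu}{d\mu'}(x)$.
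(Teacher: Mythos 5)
Your proposal is correct and follows essentially the same route as the paper, whose entire proof is the one-line identity $d\rho'_x(y)\,d\mu'(x) = p(x)q(y)\,d\rho_x(y)\,d\mu(x)$ with the details left to the reader (and the preceding lemma you invoke is proved there by exactly the disintegration comparison you carry out). Your second, self-contained argument merely supplies the omitted details, in particular the upgrade from ``for each $B$, for $\mu$-a.e.\ $x$'' to a single conull set via a countable generating algebra, which is a worthwhile addition but not a different method.
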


\begin{proof}
The result can be easily deduced from  the formula 
$$
d\rho'_x(y)d\mu'(x) = p(x) q(y) d\rho_x(y)d\mu(x). 
$$
We  leave the details to the reader.
\end{proof}

Relation (\ref{eq_product RN der}) means that the Radon-Nikodym 
derivative $\dfrac{d\rho'_x}{d\rho_x}(y)$ is proportional to the function
$q(y)$ where the coefficient of proportionality is given by $\varphi(x)$. 
If $\rho$ and $\rho'$ are symmetric measures, then
$\dfrac{d\rho'}{d\rho}(x, y) = p(x)p(y)$.

\begin{theorem}\label{lem_ P eq P'} Let $\rho$ and $\rho'$ be two 
strongly equivalent  measures on $\vv$ such that $d\rho'_x =
r_x(y) d\rho_x(y)$ for all $x\in V$.  Then  the corresponding Markov
 processes   $(P,c)$ and $(P', c')$ are strongly equivalent and 
\be\label{eq P' and P}
P'(f)(x) = \frac{P(fr_x)(x)}{P(r_x)(x)}.
\ee
\end{theorem}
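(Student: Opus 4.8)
The plan is to reduce everything to the disintegration identity $d\rho'_x = r_x(y)\,d\rho_x(y)$ together with the defining formula \eqref{eq formula for P} for the Markov operator. First I would record that $(P',c')$ is genuinely a reversible Markov process of the type considered here: since $\rho'$ is symmetric, Lemma \ref{lem sym vs revers} applies, and Assumption 1 for $\rho'$ guarantees $c'(x) = \rho'_x(V) \in (0,\infty)$ for $\mu$-a.e. $x$. Strong equivalence of $(P,c)$ and $(P',c')$ is then immediate from Definition \ref{def_MP equivalence}: by hypothesis $\rho$ and $\rho'$ are strongly equivalent, i.e.\ mutually equivalent with $\mu = \rho\circ\pi_1^{-1} = \rho'\circ\pi_1^{-1}$, which is precisely the defining condition for $(P,c)$ and $(P',c')$ to be strongly equivalent.

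The next step is to express $c'$ through $\rho$ and $r$. Integrating the Radon--Nikodym relation over $V$,
$$
c'(x) = \rho'_x(V) = \int_V r_x(y)\,d\rho_x(y) = c(x)\,P(r_x)(x),
$$
where the last equality is just \eqref{eq formula for P} applied to the function $y \mapsto r(x,y)$ with $x$ held fixed. In particular $P(r_x)(x) = c'(x)/c(x)$ is strictly positive and finite $\mu$-a.e., so the right-hand side of \eqref{eq P' and P} is well-defined a.e. Here $P(r_x)(x)$ and $P(fr_x)(x)$ are understood in the customary way: one freezes the outer variable $x$, forms the Borel function $y \mapsto r(x,y)$ (resp.\ $y\mapsto f(y)r(x,y)$), applies $P$, and evaluates the result at $x$; integrability of $y \mapsto f(y)r(x,y)$ against $\rho_x$ for bounded $f$ is automatic from $\int_V r_x\,d\rho_x = c'(x) < \infty$.

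Finally, for bounded Borel $f$ I would compute directly
$$
P'(f)(x) = \frac{1}{c'(x)}\int_V f(y)\,d\rho'_x(y) = \frac{1}{c'(x)}\int_V f(y)\,r_x(y)\,d\rho_x(y) = \frac{c(x)}{c'(x)}\,P(fr_x)(x) = \frac{P(fr_x)(x)}{P(r_x)(x)},
$$
the last equality substituting $c'(x) = c(x)P(r_x)(x)$ from the previous step. A short density/measurability remark then extends the identity to the function classes of interest.

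I do not expect a serious obstacle: the content is essentially a one-line disintegration computation, and the only points requiring care are (a) confirming that $(P',c')$ lies in the standing class so that the quotient in \eqref{eq P' and P} makes sense $\mu$-a.e., and (b) being explicit about the mild abuse of notation in writing $P(fr_x)(x)$. If anything deserves attention, it is checking that $x \mapsto P(r_x)(x)$ and $x \mapsto P(fr_x)(x)$ are Borel measurable; this follows from $(x,y)\mapsto r(x,y)$ being a measurable field together with the disintegration theorem (Theorem \ref{thm Simmons}).
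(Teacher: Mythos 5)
Your proposal is correct and follows essentially the same route as the paper: both establish $P(r_x)(x) = c'(x)/c(x)$ by integrating the Radon--Nikodym relation, then unwind $P'(f)(x)$ through the disintegration $d\rho'_x = r_x\,d\rho_x$ to get $\frac{c(x)}{c'(x)}P(fr_x)(x)$. Your added remarks on measurability of $x\mapsto P(r_x)(x)$ and on the strong-equivalence claim being immediate from Definition \ref{def_MP equivalence} are sensible but do not change the substance of the argument.
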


\begin{proof} We first find $P(r_x)$:
\begin{eqnarray} \label{eq P(r_x)}\nonumber
P(r_x)(x)  &= & \int_V \frac{d\rho'_x}{d\rho_x}(y) \; P(x, dy)\\
\nonumber
&= & \frac{1}{c(x)}  \int_V \frac{d\rho'_x}{d\rho_x}(y) \; d\rho_x(y)\\
&= & \frac{1}{c(x)}  \int_V \; d\rho'_x(y)\\
\nonumber
&= & \frac{c'(x)}{c(x)}.
\nonumber
\end{eqnarray} 

Next, we compute 
$$
\ba 
P'(f)(x) =& \int_V f(y) \; P'(x, dy)\\
= & \frac{1}{c'(x)} \int_V f(y)  \; d\rho'_x(y)\\
= & \frac{1}{c'(x)} \int_V f(y) r_x(y) \; d\rho_x(y)\\
= & \frac{c(x)}{c'(x)} \int_V f(y) r_x(y) \; dP(x, dy)\\
= & \frac{c(x)}{c'(x)} P(f r_x)(x)\\
\ea
$$
Now, the result follows from (\ref{eq P(r_x)}).
\end{proof}

\begin{remark}
(1) Let the symmetric measures $\rho$ and $\rho'$ be strongly equivalent,
$d\rho'_x(y)= r_x(y) d\rho_x(y)$. As in (\ref{eq P(r_x)}), we can 
obtain that 
$$
P'\left(\frac{1}{r_x}\right)(x) = \frac{c(x)}{c'(x)}.
$$ 
Therefore, the following property holds:
$$
P(r_x)(x) P'\left(\frac{1}{r_x}\right)(x) = 1
$$

(2) Since the notion of equivalence of measures $\rho$ and $\rho'$ is
symmetric, we note that the roles of $P$ and $P'$ can be 
interchanged and the following relation holds:
$$
P(f)(x) = \frac{P'\left( f \frac{1}{r_x} \right) (x)}  {P'\left(\frac{1}{r_x}
 \right)(x)}.
$$

(3) It follows from the strong equivalence of $\rho$ and $\rho'$ that 
$r_x(y)$ is integrable with respect to $\rho_x$ and  
$$
c'(x) = \int_V r_x(y) \; d\rho_x(y). 
$$

(4) Several useful formulas can be easily deduced from  Theorem
\ref{lem_ P eq P'}. Firstly, formula \eqref{eq P' and P} can be rewritten
in the form
\be\label{eq_ P via c, c', and P'}
P(f r_x)(x) = c'(x) P'(f)(x) c(x)^{-1},
\ee
and equivalently, the latter is represented as a relation between Markov
kernels:
$$
c'(x) P'(x, dy) = c(x) r_x(y) P(x, dy).
$$

(5) The same proof as in Theorem \ref{lem_ P eq P'} shows that 
$$
R'(f)(x) = R(fr_x)(x).
$$

(6) In more general setting, assuming that $d\rho'_x(y) = m(x) r_x(y) 
d\rho_x(y)$ where $m(x)$ is as in  (\ref{eq-RN rho and rho'}), we deduce
that 
$$
P(f r_x)(x) m(x)= c'(x) P'(f)(x) c(x)^{-1}.
$$

Similarly, one can show that 
$$
R'(f)(x) = m(x) R(f r_x)(x)
$$
where the operator $R'$ is defined by $x \mapsto \rho'_x$. 

(7) Suppose that, for given pair $(P, c)$, the operator $P'$ is defined by
(\ref{eq_ P via c, c', and P'}), and let $d\nu'(x) = c'(x) d\mu(x)$. Then
we claim that $\nu' P' = \nu'$:

$$
\ba
\int_V P'(f)(x)\; d\nu'(x) = & \ \int_V c(x) P(f r_x)(x) c'(x)^{-1} c'x) \; 
d\mu(x) \\
= & \ \int_V P(fr_x)(x) \; d\nu(x)\\
= &\ \int_V\left( \int_V (fr_x)(y) P(x, dy) \right)\; d\nu(x)\\
= & \ \iint_{\VtV} f(y) \frac{d\rho'_x}{d\rho_x}(y) c(x)^{-1} d\rho_x(y)
c(x) d\mu(x)\\
= & \ \iint_{\VtV} f(y) \; d\rho'_x(y) d\mu(x) \\
= & \ \iint_{\VtV} f(x) \; d\rho'(x,y)\\
= & \int_V f(x) c'(x)\; d\mu(x)\\
= & \ \int f(x) \; d\nu'(x).
\ea
$$
\end{remark}

\subsection{On the Laplacians $\Delta$ and $\Delta'$} 
In the remaining part of this section, we will discuss relations between 
the Laplace operators $\Delta$ and $\Delta'$ acting in the finite
energy Hilbert spaces $\h_E(\rho)$ and $\h_E(\rho')$ respectively. 

Let $\Delta'(f)$ be the Laplace operator defined by a symmetric measure
$\rho'$ on $\vv$. We can find out how $\Delta'$ and $\Delta$ are
 related.

\begin{proposition}\label{lem_P' preserves nu'}
Let $\rho$ and $\rho'$ be two equivalent symmetric measures
on $\vv$ such that $d\rho'(x, y) = q(x)q(y)d \rho(x, y)$. Then 
$$
\Delta'(f) =  cqf(P(q) - q) + q \Delta(qf).
$$
In particular, when $q$ is harmonic for $P$, then 
\be\label{eq Delta' via Delta}
\Delta'(f) = q \Delta (qf).
\ee
Moreover, 
$$
\Delta'(f) =0 \ \Longleftrightarrow \ P(qf) = f P(q),
$$
and assuming that $P(q) = q$, we have 
$$
f \in \h arm(\Delta') \ \Longleftrightarrow \ qf \in \h arm (\Delta).
$$ 
\end{proposition}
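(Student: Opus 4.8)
The plan is to compute $\Delta'(f)$ directly from the defining formula $\Delta'(f)(x) = \int_V (f(x) - f(y))\, d\rho'_x(y)$ and then translate the resulting integral into expressions involving $\Delta$ and $P$. First I would record the disintegration relation: since $d\rho'(x,y) = q(x)q(y)\, d\rho(x,y)$ and both measures project onto the same $\mu$ (they are equivalent symmetric measures), Lemma~\ref{lem r(x,y) = r(x)r(y)} (with $p = q$ and $\mu = \mu'$, so $\varphi(x) = q(x)$) gives $d\rho'_x(y) = q(x)q(y)\, d\rho_x(y)$ for $\mu$-a.e.\ $x$, hence also $c'(x) = q(x)\, R(q)(x) = q(x) c(x) P(q)(x)$.

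Next I would split the integrand. Writing
\be
\Delta'(f)(x) = \int_V (f(x) - f(y))\, q(x)q(y)\, d\rho_x(y)
= q(x) f(x) \int_V q(y)\, d\rho_x(y) - q(x) \int_V f(y) q(y)\, d\rho_x(y).
\ee
The first integral is $R(q)(x) = c(x) P(q)(x)$, and the second is $R(qf)(x)$. Now I use the identity $R(qf) = c\cdot qf - \Delta(qf)$ from \eqref{eq Delta via R} (applied to the function $qf$ in place of $f$), so that
\be
\Delta'(f)(x) = q(x) f(x) c(x) P(q)(x) - q(x)\big( c(x) q(x) f(x) - \Delta(qf)(x)\big)
= c(x) q(x) f(x)\big(P(q)(x) - q(x)\big) + q(x)\Delta(qf)(x),
\ee
which is exactly the claimed formula $\Delta'(f) = cqf(P(q) - q) + q\,\Delta(qf)$. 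The specialization to harmonic $q$ (i.e.\ $P(q) = q$, equivalently $\Delta(q) = 0$) is then immediate: the first term vanishes and $\Delta'(f) = q\,\Delta(qf)$.

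For the last two equivalences I would argue as follows. From the general formula, $\Delta'(f) = 0$ iff $cqf(P(q) - q) + q\,\Delta(qf) = 0$; dividing by $q > 0$ and using $\Delta(qf) = c(qf - P(qf))$ and $cqfP(q) - cq^2f = cqf(P(q)-q)$, the condition becomes $cqfP(q) - c P(qf) = 0$, i.e.\ $P(qf) = fP(q)$, since $c > 0$. Finally, assuming $P(q) = q$, we have $\Delta'(f) = q\,\Delta(qf)$, and since $q$ is strictly positive this vanishes iff $\Delta(qf) = 0$, i.e.\ $f \in \h arm(\Delta')$ iff $qf \in \h arm(\Delta)$. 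The only genuinely delicate point is the justification that $d\rho'_x = q(x)q(y)\,d\rho_x$ holds fiberwise (the uniqueness-of-disintegration argument, already supplied by Lemma~\ref{lem r(x,y) = r(x)r(y)}) and that all the integrals above are finite for $f$ in the relevant energy space — but on the dense subspace $\Dfin(\mu)$, or under the hypothesis $f \in \h_E(\rho')$, the symmetry formula \eqref{eq formula fo symm meas} guarantees the manipulations are legitimate, so I do not expect a real obstruction there.
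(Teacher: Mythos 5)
Your proposal is correct and follows essentially the same route as the paper: a direct computation of $\Delta'(f)$ from the fiberwise identity $d\rho'_x(y)=q(x)q(y)\,d\rho_x(y)$, yielding $\Delta'(f)=cq\,[fP(q)-P(qf)]$, followed by the same algebraic regrouping (your substitution $R(qf)=cqf-\Delta(qf)$ is just the paper's ``add and subtract $cq^2f$'' step in disguise), and the two equivalences read off from the intermediate expression. The only blemish is a harmless typo in your derivation of $\Delta'(f)=0\Leftrightarrow P(qf)=fP(q)$ (the term $cqfP(q)$ should be $cfP(q)$ after dividing by $q$); the logic is otherwise sound.
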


\begin{proof}
(1) By definition of the operator $\Delta$, we have
\be\label{eq Delta'}
\ba
\Delta'(f)(x) = & \ \int_V (f(x) - f(y)) \; d\rho'_x(y) \\
= &\  \int_V (f(x) - f(y))q(x) q(y) \; d\rho_x(y) \\
= &\  \int_V (f(x) - f(y)) c(x)q(x) q(y) \; dP(x, dy) \\
= & \ c(x) q(x) f(x) \int_V q(y) \; P(x, dy) - c(x) q(x) \int_V q(y) f(y)\;
P(x, dy)\\
= & \ c(x)q(x) \left[ f(x) P(q)(x) - P(qf)(x) \right]. 
\ea
\ee
Add and subtract $cq^2f$ to the right hand side of (\ref{eq Delta'}). Then, 
regrouping the terms, we obtain
$$
\Delta'(f) = c q [qf - P(qf)] + cqf (P(q) - q) = q \Delta (qf) + cqf (P(q) - q). 
$$
This means that, in case when $P(q) = q$, the Laplace operators 
$\Delta$ and $\Delta'$ are related as in (\ref{eq Delta' via Delta}).

(2) Now we can apply (1) to prove the formulas given in (2). 
From the last expression in (\ref{eq Delta'}), we see that $f$ is harmonic
with respect to $\Delta'$ if and only if $P(qf) = fP(q)$. 
\end{proof}

\begin{corollary}
Let $\rho$ be a symmetric measure on $\vv$, and let $q$ be a 
 harmonic
function for the Markov operator $P$ generated by $\rho$. Define
the symmetric measure $\rho'$ such that $d\rho'(x,y) = q(x)q(y)
d\rho(x,y)$. Let $P'$ be the corresponding Markov operator produced by
$\rho'$. Then we have the map
$$
\h arm(P') \times \h arm(P) \ni (f, q) \mapsto fq \in \h arm(P).
$$
\end{corollary}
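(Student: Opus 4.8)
The plan is to deduce this immediately from Proposition~\ref{lem_P' preserves nu'}. First I would recall, from Definition~\ref{def_harmonic}, that being harmonic for a Markov operator is equivalent to being annihilated by the corresponding Laplacian, so that $\h arm(P) = \h arm(\Delta)$ and $\h arm(P') = \h arm(\Delta')$. The hypotheses here are precisely those of Proposition~\ref{lem_P' preserves nu'} together with the extra assumption $P(q) = q$; that proposition then yields the equivalence $f \in \h arm(\Delta') \Longleftrightarrow qf \in \h arm(\Delta)$. Reading this through the identifications above gives $f \in \h arm(P') \Longrightarrow qf \in \h arm(P)$, which is exactly the assertion that $(f,q) \mapsto fq$ maps $\h arm(P') \times \h arm(P)$ into $\h arm(P)$.

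As a self-contained alternative, I would argue straight from Theorem~\ref{lem_ P eq P'}. Disintegrating $d\rho'(x,y) = q(x)q(y)\,d\rho(x,y)$ over $\mu$ identifies the factor $r_x(y)$ of that theorem as $r_x(y) = q(x)q(y)$. A short computation, using $P(q) = q$, then gives $P(r_x)(x) = q(x)\,P(q)(x) = q(x)^2$ and $P(f r_x)(x) = q(x)\,P(fq)(x)$, so formula~\eqref{eq P' and P} collapses to $P'(f)(x) = P(fq)(x)/q(x)$. Consequently $P'(f) = f$ holds if and only if $P(fq) = fq$, i.e. $fq \in \h arm(P)$ whenever $f \in \h arm(P')$, as claimed.

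I expect no genuine obstacle: the analytic content is already packaged in Proposition~\ref{lem_P' preserves nu'} and Theorem~\ref{lem_ P eq P'}, and what remains is only bookkeeping — noting that $q$ is a strictly positive Borel function so that division by $q$ and the product $fq$ are legitimate and again Borel, and checking the normalization $c'(x) = \int_V q(x)q(y)\,d\rho_x(y) = c(x)q(x)^2$ (using $R(q) = cq$) is consistent with the remarks following Theorem~\ref{lem_ P eq P'}. If there is any subtlety it is purely expository, namely making explicit that the $q$ in the second slot of the map is the \emph{same} harmonic function used to build $\rho'$, so that the statement is not a genuine bilinear pairing but a parametrized family of maps $f \mapsto fq$ indexed by the chosen $q \in \h arm(P)$.
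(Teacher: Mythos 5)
Your proposal is correct, and your second, ``self-contained'' route is essentially the paper's own proof: the paper likewise computes $c'(x)=q(x)R(q)(x)=c(x)q(x)^2$ from $R(q)=cq$ and then unwinds $f=P'(f)$ directly to $f=q^{-1}P(qf)$, which is exactly the identity $P'(f)=P(fq)/q$ you obtain from Theorem~\ref{lem_ P eq P'} with $r_x(y)=q(x)q(y)$. Your first route, reading the corollary off the equivalence $f\in\h arm(\Delta')\Leftrightarrow qf\in\h arm(\Delta)$ in Proposition~\ref{lem_P' preserves nu'} together with $\h arm(P)=\h arm(\Delta)$, is a legitimate shortcut the paper does not take, and arguably the cleaner derivation given that the proposition immediately precedes the corollary; the only thing it costs is that one must note $c,c'>0$ so that vanishing of the Laplacian is genuinely equivalent to being fixed by the Markov operator. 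One small caution on your ``bookkeeping'' remark: the paper explicitly allows $q$ to change sign (it speaks of signed symmetric measures for this statement), so the correct hypothesis for dividing by $q$ is $q\neq 0$ a.e.\ rather than strict positivity.
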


\begin{proof}
It follows from the definition of the measure $\rho'$ that
$$
c'(x) = \int_V\; d\rho'_x(y) = \int_V q(x) q(y)\; d\rho_x(y) = 
q(x) R(q)(x).
$$
Since $q$ is harmonic, i.e., $R(q) = cq$, we obtain that 
\be\label{eq_c c' q}
  c'(x) =c(x) q^2(x).
 \ee

Let $f $ be any function harmonic with respect to the operator $P'$. 
Then 

$$
\ba 
f(x) & = \int_V f(y) \; P'(x, dy)\\
& = \frac{1}{c'(x)} \int_V f(y) \; d\rho'_x(y)\\
& = \frac{q(x)}{c'(x)} \int_V f(y)q(y) d\rho_x(y)\\
& = \frac{q(x)}{c'(x)} \int_V f(y)q(y) c(x) P(x, dy)\\
& = \frac{q(x)c(x)}{c'(x)} P(qf)(x)\\
\ea
$$
It follows from \eqref{eq_c c' q} that $f = q^{-1} P(qf)$, and we are
done.
\end{proof}

We remark that in the proved statement we temporarily 
 extended the notion of symmetric measures to the case of 
\textit{signed symmetric measures} assuming that
the $P$-harmonic function $q$ can be negative. 

\begin{theorem}\label{thm energy rho and rho'}
Suppose that $\rho'$ and $\rho$ are two symmetric measures such that
$d\rho'(x, y) = q(x)q(y)d \rho(x, y)$. If $q$ is harmonic for the Laplace 
operator $\Delta$, then the operator
$$
Q : \h_E(\rho') \to \h_E(\rho)  : Q(f) = qf
$$
is an isometry. 
\end{theorem}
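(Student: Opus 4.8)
The plan is to reduce the statement to a single identity between the two energy norms and then verify that identity by a short computation combining the symmetry of $\rho$ with the harmonicity of $q$. Concretely, for a Borel function $f$ I would compute $\|Q(f)\|^2_{\h_E(\rho)} = \tfrac12\iint \big(q(x)f(x)-q(y)f(y)\big)^2\,d\rho(x,y)$ from \eqref{eq norm in H_E}, expand the square, and use the symmetry of $\rho$ (Lemma~\ref{lem symm measure via int}(1), i.e.\ \eqref{eq formula fo symm meas} applied to the flip) to see that the two ``diagonal'' contributions $\iint q(x)^2f(x)^2\,d\rho$ and $\iint q(y)^2f(y)^2\,d\rho$ are equal; after accounting for the factor $\tfrac12$ this yields $\|Q(f)\|^2_{\h_E(\rho)} = \iint q(x)^2f(x)^2\,d\rho(x,y) - \iint q(x)f(x)\,q(y)f(y)\,d\rho(x,y)$.

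Next I would run the parallel expansion for $\|f\|^2_{\h_E(\rho')} = \tfrac12\iint (f(x)-f(y))^2 q(x)q(y)\,d\rho(x,y)$, using $d\rho'(x,y)=q(x)q(y)\,d\rho(x,y)$. Since the weight $q(x)q(y)$ is itself invariant under the flip, the same symmetrization gives $\|f\|^2_{\h_E(\rho')} = \iint f(x)^2 q(x)q(y)\,d\rho(x,y) - \iint f(x)f(y)q(x)q(y)\,d\rho(x,y)$. The two cross terms now coincide verbatim, so the whole statement comes down to the equality of the diagonal terms, namely $\iint q(x)^2 f(x)^2\,d\rho(x,y) = \iint f(x)^2 q(x)q(y)\,d\rho(x,y)$.

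This last equality is exactly where harmonicity enters. Disintegrating $\rho=\int_V \rho_x\,d\mu(x)$ via \eqref{eq disint formula} (Theorem~\ref{thm Simmons}), the left side equals $\int_V q(x)^2 f(x)^2 \rho_x(V)\,d\mu(x) = \int_V q(x)^2 f(x)^2 c(x)\,d\mu(x)$, while the right side equals $\int_V f(x)^2 q(x)\big(\int_V q(y)\,d\rho_x(y)\big)\,d\mu(x) = \int_V f(x)^2 q(x) R(q)(x)\,d\mu(x)$. By Definition~\ref{def_harmonic}, $q$ harmonic for $\Delta$ is equivalent to $R(q)=cq$, so the right side is also $\int_V f(x)^2 q(x)^2 c(x)\,d\mu(x)$, and the two match. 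Read in $[0,\infty]$ for arbitrary Borel $f$, the resulting identity $\|Q(f)\|_{\h_E(\rho)} = \|f\|_{\h_E(\rho')}$ shows in particular that $Q$ maps $\h_E(\rho')$ into $\h_E(\rho)$; combined with the obvious linearity of $f\mapsto qf$ this is precisely the assertion that $Q$ is an isometry (and, since $q>0$, one gets a unitary isomorphism with inverse $g\mapsto g/q$). The only step needing care is the symmetrization: one must apply the flip to the correct pair of factors and keep each interchange of integration legitimate in the extended-real-line sense, exactly as sanctioned by Lemma~\ref{lem symm measure via int}; everything else is a direct calculation.
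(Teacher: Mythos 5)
Your proposal is correct and follows essentially the same route as the paper's proof: expand both energy norms, observe that the cross terms agree, use the flip symmetry of $\rho$ to collapse the diagonal terms, and invoke $R(q)=cq$ after disintegrating over $\mu$. The only cosmetic difference is that you expand the two norms separately and compare, whereas the paper computes the difference $\|f\|^2_{\h_E(\rho')}-\|qf\|^2_{\h_E(\rho)}$ in a single chain; the substance is identical.
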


\begin{proof} 
We need to show that, for any $f \in \h_E(\rho')$,
$$
|| f ||_{\h_E(\rho')} = ||q f ||_{\h_E(\rho)}.
$$
 In the  computation given below, we use the following:
 the definition of the norm in the
 finite energy space, the symmetry of the measures $\rho$ and 
 $\rho'$, and the relation $R(q) = cq$ that holds for harmonic functions
 because 
 $$
\Delta (q)(x ) = c(x) q(x) - R(q)(x).
$$
 Then we compute

$$
\ba
|| f ||^2_{\h_E(\rho')} - ||q f ||^2_{\h_E(\rho)} = &\ \frac{1}{2} 
\iint_{\VtV}  (f(x) - f(y))^2 \; d\rho'(x,y) \\ 
 & \ \ \ \qquad - \iint_{\VtV}  (q(x)f(x) - q(y)f(y))^2 \; d\rho(x,y)\\
 = & \iint_{\VtV}  [ (f(x) - f(y))^2 q(x)q(y) \\ 
 & \qquad \quad - (q(x)f(x) - q(y)f(y))^2] \; d\rho(x,y)\\
= & \ \iint_{\VtV} [f^2(x)q(x) q(y) - q^2(x) f^2(x) ] \\
& \qquad \quad + [f^2(y) q(x)q(y) - q^2(y) f^2(y) ] \; d\rho(x,y)\\
= &\ 2 \ \iint_{\VtV} [f^2(x)q(x) q(y) - q^2(x) f^2(x) ] \; 
d\rho_x(y)d\mu(x)\\
= &\ 2\ \int_V f^2(x)q(x) [R(q)(x) - c(x) q(x)]\; d\mu(x)\\
= & \ 0.
%= & \ 2\ \int_V f^2(x)q(x) [R(q)(x) - \int_V q(x) \; d\rho_x(y)]\; d
%\mu(x)
%\iint_{\VtV} (f^2(x) - f(x)f(y)) q(x) q(y)\; d\rho_x(y)d\mu(x)\\
%= & \ \int_V f^2(x) q(x) \left( \int_V q(y) \; d\rho_x(y) \right)
%\;d\mu(x)\\
%& \ \ \ \  - \int_V f(x) q(x) \left( \int_V q(y) f(y) \; d\rho_x(y) \right)\;
%d\mu(x)\\
%=& \ \int_V f(x) q(x) [ f(x) R(q)(x) - R(qf)(x)]\; d\mu(x)\\
%=& \ \int_V f q [ f R(q) - cqf + cqf - R(qf)]\; d\mu\\
%=& \ \int_V f^2 q [ f R(q) - cqf + cqf - R(qf)]\; d\mu\\
%=& \ \int_V[ f^2 q ( R(q) - cq) + qf \Delta(qf) ]\; d\mu\\
%= & \ \int_V qf \Delta(qf)\; d\mu\\
%= & \ || qf ||^2_{\h_E(\rho)}\\
\ea
$$
This computation shows that $Q(f) = qf \in \h_E(\rho)$ and $Q$ 
preserves the norm. 
\end{proof}

Continuing the above theme, consider the Laplace operator $\Delta$
acting in $L^2(\mu)$. We recall that $\Delta : L^2(\mu) \to L^2(\mu)$
is a positive definite self-adjoint operator according to 
Proposition \ref{prop prop of R, P, Delta}.   

\begin{proposition}\label{prop Delta(qf)} Suppose $\rho$ is a symmetric measure on 
$\vv$ and the Laplacian $\Delta= \Delta(\rho)$ is defined by
\eqref{eq def of Delta}. 
Let $q$ and $f$ be functions on $\sms$ from the domain of $\Delta$
such that $qf$ is also in the domain of $\Delta$.
Then
\be\label{eq_Delta(qf)}
\int_V \Delta(qf)\; d\mu = \int_V q \Delta(f) \; d\mu - 
\int_V f \Delta(q)\; d\mu.
\ee
If $q$ and $f$ are in $L^2(\mu)$, then $\int_V \Delta(qf) \; d\mu =0$.
\end{proposition}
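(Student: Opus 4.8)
The plan is to express each of the three integrals as a double integral against $\rho$ over $\VtV$ and to exploit the flip-invariance of $\rho$ recorded in Lemma~\ref{lem symm measure via int}(1). Using the pointwise formula \eqref{eq def of Delta} for the Laplacian together with the disintegration identity \eqref{eq disint formula}, one has
\[
\int_V \Delta(qf)\,d\mu \;=\; \iint_{\VtV}\bigl(q(x)f(x)-q(y)f(y)\bigr)\,d\rho(x,y).
\]
I would then use the elementary identity
\[
q(x)f(x)-q(y)f(y) \;=\; q(x)\bigl(f(x)-f(y)\bigr)\;-\;f(y)\bigl(q(y)-q(x)\bigr),
\]
which splits the integrand into two terms. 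Re-disintegrating the first term gives $\iint_{\VtV} q(x)(f(x)-f(y))\,d\rho(x,y)=\int_V q\,\Delta(f)\,d\mu$. For the second term I would apply the symmetry relation \eqref{eq formula fo symm meas}: the function $f(y)(q(y)-q(x))$ becomes $f(x)(q(x)-q(y))$ under the flip $(x,y)\mapsto(y,x)$, so $\iint_{\VtV}f(y)(q(y)-q(x))\,d\rho(x,y)=\iint_{\VtV}f(x)(q(x)-q(y))\,d\rho(x,y)=\int_V f\,\Delta(q)\,d\mu$. Subtracting the two contributions yields \eqref{eq_Delta(qf)}.

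For the last assertion, suppose in addition that $q,f\in L^2(\mu)$. Since $q$, $f$, and $qf$ lie in the domain of $\Delta$, the functions $\Delta(q),\Delta(f)$ belong to $L^2(\mu)$ by Proposition~\ref{prop prop of R, P, Delta}(6), so the two integrals on the right of \eqref{eq_Delta(qf)} are the genuine $L^2(\mu)$ pairings $\langle q,\Delta(f)\rangle_{L^2(\mu)}$ and $\langle f,\Delta(q)\rangle_{L^2(\mu)}$. The same proposition asserts that $\Delta$ is essentially self-adjoint on $L^2(\mu)$, hence symmetric, so that $\langle q,\Delta(f)\rangle_{L^2(\mu)}=\langle \Delta(q),f\rangle_{L^2(\mu)}=\langle f,\Delta(q)\rangle_{L^2(\mu)}$. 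Consequently the right-hand side of \eqref{eq_Delta(qf)} vanishes, and so does $\int_V\Delta(qf)\,d\mu$.

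The algebra above is immediate; the step I expect to be the real obstacle is the integrability bookkeeping that makes the two displayed manipulations legitimate. Lemma~\ref{lem symm measure via int}(1) is stated for bounded Borel functions and in the extended-real-line sense, so applying it to $f(y)(q(y)-q(x))$ and, likewise, separating the cross term from $q\,\Delta(f)$ inside $\int_V\Delta(qf)\,d\mu$ requires absolute convergence of the relevant iterated integrals — precisely what the hypothesis that $q$, $f$, and $qf$ all lie in the domain of $\Delta$ is meant to supply (in particular it is what renders $\int_V\Delta(qf)\,d\mu$ meaningful in the first place). I would dispose of this by a standard truncation argument: verify the identity first for bounded functions living on sets of finite measure, for instance members of $\Dfin(\mu)$, where Lemma~\ref{lem symm measure via int}(1) and Fubini's theorem apply verbatim, and then pass to the limit, using the domain conditions on $q$, $f$, and $qf$ to control the error terms.
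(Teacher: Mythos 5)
Your proof is correct and follows essentially the same route as the paper: the same algebraic splitting of $q(x)f(x)-q(y)f(y)$ into $q(x)(f(x)-f(y))$ plus a cross term, the same use of the flip-symmetry \eqref{eq formula fo symm meas} to convert the cross term into $-\int_V f\,\Delta(q)\,d\mu$, and the same appeal to (essential) self-adjointness of $\Delta$ on $L^2(\mu)$ for the final vanishing statement. Your closing remarks on integrability and truncation are a sensible supplement but do not change the argument.
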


\begin{proof} By definition of $\Delta$, we have
$$
\ba
\Delta(qf) & = \int_V [ (qf(x) - qf(y)]\; d\rho_x(y)\\
& = \int_V (q(x) f(x) - q(x) f(y) + q(x)f(y) - q(y) f(y))\; d\rho_x(y)\\
& = q(x) \Delta(f) - \int_V f(y) (q(x) - q(y))\; d\rho_x(y)\\
\ea
$$
Then

$$
\ba
\int_V \Delta(qf)(x)\; d\mu(x) & = \int_V q\Delta(f) \; d\mu(x) + 
\iint_{\VtV}  f(y) (q(x) - q(y))\; d\rho_x(y) d\mu(x)\\
& = \int_V q\Delta(f) \; d\mu(x) + 
\iint_{\VtV}  f(x) (q(y) - q(x))\; d\rho_x(y) d\mu(x)\\
& = \int_V q\Delta(f) \; d\mu(x) - 
\int_{V}  f \Delta(q)\;  d\mu(x)\\
\ea
$$
and \eqref{eq_Delta(qf)} is proved. 

If the functions $q$ and $f$ are in $L^2(\mu)$ (in particular, $q$ and
$f$ can be taken from the dense subset $\Dfin(\mu)$), then 
we can use the fact that $\Delta$ is essentially self-adjoint and conclude
that
$$
\int_V \Delta(qf)(x)\; d\mu(x) = \langle q, \Delta(f) \rangle_{L^2(\mu)}
 - \langle \Delta (q), f \rangle_{L^2(\mu)} = 0.
$$ 
\end{proof}
 
We immediately deduce the following fact from Proposition 
\ref{prop Delta(qf)}.

\begin{corollary}
(1)  If functions  $f$ and $f^2$ are in the domain of $\Delta$, then
 $$\int_V \Delta(f^2)\; d\mu =0.
 $$
 
 (2) If $f$ is a harmonic function for $\Delta$, then $\Delta(f^2) \leq 0$, 
 and therefore $f^2$ is also harmonic. 
 \end{corollary}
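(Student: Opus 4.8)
The plan is to read off both parts from Proposition~\ref{prop Delta(qf)} together with one elementary convexity estimate, so almost no new computation is needed.

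For part (1), I would simply specialize formula \eqref{eq_Delta(qf)} to the case $q=f$. The standing hypothesis here is precisely what Proposition~\ref{prop Delta(qf)} demands in that case, namely that $f$ and $qf=f^2$ both lie in the domain of $\Delta$, and then the right-hand side of \eqref{eq_Delta(qf)} collapses to $\int_V f\,\Delta(f)\,d\mu-\int_V f\,\Delta(f)\,d\mu=0$. Hence $\int_V\Delta(f^2)\,d\mu=0$.

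For part (2), I would first establish the pointwise identity. Writing $\Delta=cI-R$ as in \eqref{eq Delta via R} and using $R(g)(x)=c(x)P(g)(x)$, we get $\Delta(f^2)(x)=c(x)\big(f^2(x)-P(f^2)(x)\big)$. Since $f$ is harmonic, Definition~\ref{def_harmonic} gives $P(f)=f$, so $f^2(x)=P(f)(x)^2$ and therefore
\[
\Delta(f^2)(x)=c(x)\big(P(f)(x)^2-P(f^2)(x)\big).
\]
Because $P(x,\cdot)$ is a probability measure, the Cauchy--Schwarz inequality (equivalently Jensen's inequality for $t\mapsto t^2$) yields $P(f)(x)^2=\big(\int_V f(y)\,P(x,dy)\big)^2\le\int_V f(y)^2\,P(x,dy)=P(f^2)(x)$. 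Combined with $c(x)>0$ (Assumption~1), this gives $\Delta(f^2)(x)\le 0$ for $\mu$-a.e.\ $x$. Finally, to conclude that $f^2$ is harmonic I would invoke part (1): the function $\Delta(f^2)$ is $\le 0$ a.e.\ and has vanishing integral against $\mu$, hence $\Delta(f^2)=0$ $\mu$-a.e., i.e.\ $f^2$ is harmonic.

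The only genuine points to watch — and where I expect the main obstacle to lie — are the domain/integrability bookkeeping: one must be sure the hypotheses let Proposition~\ref{prop Delta(qf)} apply in (1), and that the Jensen step in (2) is legitimate for $\mu$-a.e.\ $x$, which requires $f\in L^2(P(x,\cdot))$, i.e.\ $R(f^2)(x)<\infty$ a.e., itself a consequence of $f^2$ being in the domain of $\Delta$. Everything else is routine.
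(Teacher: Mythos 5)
Your proposal is correct and follows essentially the same route as the paper: part (1) is the specialization $q=f$ of Proposition \ref{prop Delta(qf)}, and part (2) uses the Schwarz/Jensen inequality $P(f^2)\ge P(f)^2$ for the Markov operator together with $P(f)=f$, then combines the sign of $\Delta(f^2)$ with the vanishing integral from (1) to conclude $\Delta(f^2)=0$. Your explicit attention to the domain and integrability hypotheses is a small improvement in rigor over the paper's terser argument, but the mathematical content is identical.
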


\begin{proof} (1) is obvious. To show that (2) holds, we use that 
$\Delta(f) = c(f - P(f)$ and $P$ is a positive operator. This means that
$P(f) \geq 0$ whenever $f \geq 0$. By Schwarz' inequality for 
positive operators, we have $P(f^2)(x) \geq P(f)^2(x)$, and therefore 
$$
\ba
\Delta(f^2) & = c(f^2 - P(f^2))\\
& \leq c(f^2 - P(f)^2)\\
&= c (f - P(f))(f+P(f))\\
&=0.
\ea
$$
The fact that $f^2$ is harmonic follows from (1) and the proved 
inequality in (2). 
\end{proof}

%%%%%

\section{Reversible Markov process generated by symmetric measures}
\label{sect Transient}

In this section, we consider Markov processes generated by a Markov
operator which is determined by a symmetric irreducible measures 
$\rho$ on the standard Borel space $\vv$ such that 
the margin measure $\mu$ on $\VB$ is \textit{$\sigma$-finite}. 
We will assume that this Markov process is transient (see the definition
 below).
The reader can find vast literature on the theory of transient Markov 
processes, we refer to \cite{Artalejo2011, Cyr2010, Korshunov2008, 
LyonsPeres2016, Nummelin1984,  Revuz1984, WeiKryscio2016,
 Woess2009}.

\subsection{Reversible Markov processes}
 
Let $\sms$ be a $\sigma$-finite measure space, and let $\rho$ be a 
symmetric measure on $\vv$ which is disintegrated  with respect to
 $(\rho_x, x \in V)$ and $\mu$ according to 
(\ref{eq disint for rho}). By assumption, $c(x) = \rho_x(V)$ is locally 
integrable. We recall (see Definition \ref{def R, P, Delta})
 that, in this setting,  a Markov operator $P$ is defined 
on $\FVB$ by the probability  kernel $x \mapsto P(x, \cdot)$. This 
operator $P$ acts by the formula
\be\label{eq-P via P(x,dy)}
P(f)(x)  = \int_V f(y) \; P(x, dy)
\ee
where $P(x, dy) =  c(x)^{-1}d\rho_x(y)$. Then the operator $P$ is 
positive and normalized, i.e., $P(\mathbbm 1) = \mathbbm 1$. As 
mentioned above in Proposition \ref{prop prop of R, P, Delta}, the fact that
$\rho$ is symmetric is equivalent to 
 self-adjointness of $P$ as an operator in $L^2(\nu)$. It follows also that 
 $P$ preserves  the measure $\nu = c \mu$.
Furthermore, we can use the kernel $x \to P(x, \cdot) = P_1(x, \cdot)$ to
define the sequence of probability kernels (transition probabilities)
$(P_n(x, \cdot) : n \in \N)$ in accordance with (\ref{eq_powers of k}). 
These kernels satisfy the equality 
$$
 P_{n+m}(x, A)  = \int_V P_n(y, A)  P_m(x, dy), \qquad n, m \in \N.
$$
Therefore one has
$$
P^n(f)(x)  = \int_V f(y) \; P_n(x, dy), \qquad n \in \N,
$$ 
and this relation defines the sequence of probability measures 
$(P_n)$ by setting $P_0(x, A) = \delta_A(x) = \chi_A(x)$ and
$$
P_n(x, A) = P^n(\chi_A) = \int_V \chi_A(y) \; P_n(x, dy), 
\qquad A\in \B, n\in \N.
$$
We use the notation $P(x, A)$ for $P_1(x, A)$. 

For the Markov operator $P$, one can define one more sequence 
of measures. We use the formula
\be\label{eq_def rho_n}
\rho_n(A \times B) = \langle \chi_A, P^n(\chi_B)\rangle_{L^2(\nu)}, 
\ee
to define the measures $\rho_n, n \in \N,$ on the Borel space $\vv$
(here  $\rho_1 = \rho$).

\begin{lemma} 
(1) Every measure $\rho_n, n \in \N,$ is symmetric on $\vv$, and
$\rho_n $ is equivalent to $\rho$.

(2) $\rho_x^{(n)}(V) = c(x), \forall n\in \N$.

(3) 
\be\label{eq_rho_n via P_N}
d\rho_n(x,y) = c(x) P_n(x, dy)d\mu(x)= P_n(x, dy)d\nu(x).
\ee

(4) 
$$
\rho_n( A\times B) = \langle \chi_A, RP^{n-1}(\chi_B)
\rangle_{L^2(\mu)}.
$$
\end{lemma}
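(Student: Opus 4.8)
The four assertions are naturally proved in the order (3), (1), (2), (4), since (3) gives the explicit disintegration formula that the others follow from. The plan is to start from the defining relation \eqref{eq_def rho_n}, namely $\rho_n(A\times B) = \langle \chi_A, P^n(\chi_B)\rangle_{L^2(\nu)}$, and unwind the inner product using $d\nu(x) = c(x)\,d\mu(x)$ and the formula $P^n(f)(x) = \int_V f(y)\,P_n(x,dy)$. Explicitly,
\[
\rho_n(A\times B) = \int_V \chi_A(x) \left(\int_V \chi_B(y)\,P_n(x,dy)\right) c(x)\,d\mu(x) = \iint_{A\times B} c(x)\,P_n(x,dy)\,d\mu(x),
\]
which, since $d\nu = c\,d\mu$, is exactly \eqref{eq_rho_n via P_N}. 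This establishes (3), and simultaneously identifies the fiber measures: the disintegration of $\rho_n$ over $\mu$ has fiber $\rho_x^{(n)}(dy) = c(x)\,P_n(x,dy)$, so in particular $\rho_x^{(n)}(V) = c(x)\,P_n(x,V) = c(x)$ because each $P_n(x,\cdot)$ is a probability measure — this is (2).

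For (1), symmetry means $\rho_n(A\times B) = \rho_n(B\times A)$, equivalently $\langle \chi_A, P^n\chi_B\rangle_{L^2(\nu)} = \langle \chi_B, P^n\chi_A\rangle_{L^2(\nu)}$. This is immediate from the self-adjointness of $P$ on $L^2(\nu)$, which holds by Proposition \ref{prop prop of R, P, Delta}(3) precisely because $\rho$ is symmetric; indeed $\langle \chi_A, P^n\chi_B\rangle = \langle P^n\chi_A, \chi_B\rangle = \langle \chi_B, P^n\chi_A\rangle$ since everything is real-valued. Since simple functions from $\Dfin(\nu)$ are dense in $L^2(\nu)$, this extends to show the bilinear form is symmetric, hence $\rho_n$ is a symmetric measure. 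Equivalence of $\rho_n$ and $\rho$ follows from (3): both are absolutely continuous with respect to the measure $d\lambda(x,y) := c(x)\,d\mu(x)\,$ pushed through the kernel structure, and more concretely, since $\rho$ is irreducible the kernel $P(x,\cdot)$ is $\mu$-irreducible, so the supports of $P_n(x,\cdot)$ interlock; the cleanest argument is that $P_n(x,dy)\,d\nu(x)$ and $P(x,dy)\,d\nu(x)$ are mutually absolutely continuous because $P$ is a Markov operator with $\nu P = \nu$ and irreducibility forces every $P_n(x,\cdot)$ to be equivalent to $\rho_x = c(x)P(x,\cdot)$ on the relevant fiber — I would cite the irreducibility discussion and \cite{BezuglyiJorgensen2018} here rather than reprove it.

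Finally, (4) is a rewriting of (3) in terms of the operator $R$. Using $P^n = P^{n-1}\circ P$ and $R(g)(x) = c(x)\,P(g)(x)$ (from \eqref{eq def of R}, \eqref{eq formula for P}), we have for real-valued $f,g$
\[
\langle \chi_A, RP^{n-1}(\chi_B)\rangle_{L^2(\mu)} = \int_V \chi_A(x)\, c(x)\, P\big(P^{n-1}\chi_B\big)(x)\,d\mu(x) = \int_V \chi_A(x)\, P^n(\chi_B)(x)\,c(x)\,d\mu(x),
\]
which equals $\langle \chi_A, P^n(\chi_B)\rangle_{L^2(\nu)} = \rho_n(A\times B)$ by definition. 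So (4) drops out of the identity $R = cP$ together with $d\nu = c\,d\mu$. The only genuinely delicate point is the \emph{equivalence} claim in (1): absolute continuity $\rho_n \ll \rho$ in one direction is not obvious from (3) alone (a set could have $\rho$-measure zero but positive $\rho_n$-measure if $P_n$ reaches further than $P$), so the main obstacle is to invoke irreducibility of $\rho$ correctly to get mutual absolute continuity; I expect the right move is to reduce to the fiberwise statement that $P_n(x,\cdot)$ and $P(x,\cdot)$ are equivalent for $\mu$-a.e.\ $x$, which follows from $\mu$-irreducibility of the kernel and the fact that $P$ is recurrent/irreducible in the sense made precise earlier.
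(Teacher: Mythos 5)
Your derivations of (2), (3), (4) and of the symmetry statement in (1) are correct, and they are essentially an expanded version of what the paper leaves as ``rather obvious'': the paper's own proof consists only of the two remarks that $\rho_n(A\times V)=\rho(A\times V)$ for every $n$ (which is the marginal computation underlying your proof of (2)) and that symmetry of $\rho_n$ follows from self-adjointness of $P^n$ in $L^2(\nu)$ (your argument verbatim). So on those points you and the paper take the same route, with yours spelled out in more detail.

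The genuine issue is the equivalence claim $\rho_n\sim\rho$ in (1). You correctly single it out as the delicate point, but you do not prove it, and neither does the paper. More importantly, the repair you propose --- that irreducibility of $\rho$ forces $P_n(x,\cdot)\sim P(x,\cdot)$ for $\mu$-a.e.\ $x$ --- cannot work as stated, because irreducibility does not exclude periodicity. Concretely, take $V=V_0\sqcup V_1$ and a symmetric measure $\rho$ supported in $(V_0\times V_1)\cup(V_1\times V_0)$; such a set is not of the form $(A\times A)\cup(A^c\times A^c)$, so $\rho$ need not be decomposable and can perfectly well be irreducible (every set of positive measure is attainable from a.e.\ $x$ at \emph{some} time). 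Yet $\rho_2$ is then supported in $(V_0\times V_0)\cup(V_1\times V_1)$, so $\rho_2$ and $\rho$ are mutually singular. Hence the equivalence assertion needs an aperiodicity-type hypothesis beyond what the paper assumes, or it should be weakened to what the computations actually establish, namely that all $\rho_n$ share the same first marginal $\nu$ and the same fiber masses $\rho_x^{(n)}(V)=c(x)$. Your honest flagging of the gap is to your credit, but the suggested appeal to irreducibility should not be accepted as closing it.
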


\begin{proof}
The assertions of the lemma are rather obvious. We only mention two 
simple facts: $\rho_n(A \times V) = \rho(A \times V)$ for every $n$, and,
 since the operator $P^n$ is self-adjoint in $L^2(\nu)$, the measure
$\rho_n$ is symmetric. 
\end{proof}

\begin{definition}\label{def reversible MP-1} 
Suppose that $x \mapsto P(x, \cdot )$ is a measurable family of transition
 probabilities on the space $\sms$, and let $P$ be the  Markov operator 
determined by $x \mapsto P(x, \cdot )$. It is said that the corresponding
 Markov process  is  \textit{reversible} with respect to a measurable 
 functions $c: x \to  (0, \infty)$ on $\VB$ if, for any sets $A, B \in \B$, the
following relation holds:
\be\label{eq def reversible P}
 \int_B c(x) P(x, A)\; d\mu(x) = \int_A c(x) P(x, B)\; d\mu(x).
\ee
\end{definition} 

As shown in \cite{BezuglyiJorgensen2018}, the reversibility 
for the Markov process $(P_n)$ is equivalent to the following properties
 (here we give an extended and  more comprehensive formulation):

\begin{theorem}\label{prop_reversible}
Let $\sms$ be a standard $\sigma$-finite measure space, $x \mapsto c(x)
\in (0, \infty)$ a measurable function, $c \in \Lloc$. Suppose that 
$x \mapsto P(x, \cdot )$ is a probability kernel. 
The following are equivalent:

(i) $x \mapsto P(x, \cdot )$ is reversible (i.e., it satisfies 
(\ref{eq def reversible P}); 

(ii) $x\to P_n(x, \cdot)$ is reversible for any $n\geq 1$;

(iii) the Markov operator $P$ defined by $x\to P(x, \cdot)$ is  self-adjoint 
on $L^2(\nu)$ and $\nu P= \nu$ where $d\nu(x) = c(x) d\mu(x)$;

(iv) 
$$
c(x) P(x, dy) d\mu(x) = c(y) P(y, dx)d\mu(y);
$$

(v) the operator $R$ defined by the relation $R(f)(x) = c(x)P(f)(x)$
is symmetric (see Remark \ref{rem_on operators});

(vi) the measure $\rho$ on $(V\times V, \B\times \B)$ defined by 
$$
\rho(A \times B) = \int_V \chi_A R(\chi_B)\; d\mu = 
\int_V c(x) \chi_A P(\chi_B)\; d\mu
$$
is symmetric;

(vii) for every  $n \in \N$, the measure $\rho_n$ defined by 
(\ref{eq_def rho_n}) is symmetric.
\end{theorem}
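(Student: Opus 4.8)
The plan is to establish the seven conditions as equivalent by traversing a single cycle, namely
\[
\text{(i)} \Longleftrightarrow \text{(iv)} \Longleftrightarrow \text{(vi)} \Longrightarrow \text{(iii)} \Longrightarrow \text{(vii)} \Longrightarrow \text{(ii)} \Longrightarrow \text{(i)},
\]
and then attaching $(v)$ through the equivalence $(v)\Leftrightarrow(vi)$, which is already available: by Remark \ref{rem_on operators}(5) (proved in \cite{BezuglyiJorgensen2018}) the operator $R$ with $R(f)=cP(f)$ is symmetric if and only if the measure $\rho(A\times B)=\int_V \chi_A R(\chi_B)\,d\mu$ is symmetric, and that $\rho$ is exactly the measure appearing in $(vi)$. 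So the actual work is the cycle.

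The first thing I would record is that the measure $\rho$ of $(vi)$ is nothing but $c(x)\,P(x,dy)\,d\mu(x)$ on $\vv$, since $\rho(A\times B)=\int_A c(x)P(x,B)\,d\mu(x)$ straight from the definition of $P$. Consequently $(iv)$ is precisely the statement that $\rho$ is invariant under the flip $\theta$, while $(i)$ is the evaluation of that identity on rectangles $A\times B$. Because $\{A\times B : A,B\in\Bfin(\mu)\}$ generates $\B\times\B$ and both $\rho$ and $\rho\circ\theta$ are $\sigma$-finite, a monotone-class argument promotes equality on such rectangles to equality of measures; this yields $(i)\Leftrightarrow(iv)\Leftrightarrow(vi)$ at once.

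Next I would run $(vi)\Rightarrow(iii)\Rightarrow(vii)\Rightarrow(ii)\Rightarrow(i)$, each step a short computation. For $(vi)\Rightarrow(iii)$: using $c(x)P(x,dy)d\mu(x)=d\rho(x,y)$, for $f,g\in\Dfin(\nu)$ one gets $\langle Pf,g\rangle_{L^2(\nu)}=\iint f(y)g(x)\,d\rho(x,y)$, and symmetry of $\rho$ converts this into $\langle f,Pg\rangle_{L^2(\nu)}$; density of $\Dfin(\nu)$ in $L^2(\nu)$ together with boundedness of $P$ there (Proposition \ref{prop prop of R, P, Delta}(4)) gives self-adjointness, and $\nu P(A)=\rho(V\times A)=\rho(A\times V)=\nu(A)$ gives $\nu P=\nu$. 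For $(iii)\Rightarrow(vii)$: self-adjointness of $P$ on $L^2(\nu)$ passes to every $P^n$, hence $\rho_n(A\times B)=\langle\chi_A,P^n\chi_B\rangle_{L^2(\nu)}=\langle\chi_B,P^n\chi_A\rangle_{L^2(\nu)}=\rho_n(B\times A)$. For $(vii)\Rightarrow(ii)$: by the lemma preceding the theorem, $d\rho_n(x,y)=c(x)P_n(x,dy)d\mu(x)$, so symmetry of $\rho_n$ is, rectangle by rectangle (again a monotone-class argument), reversibility of $P_n$. Finally $(ii)\Rightarrow(i)$ is the case $n=1$, which closes the cycle.

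I do not expect a single dramatic obstacle; the danger is in the repeated bookkeeping. The step to watch is the passage back and forth between a measure on $\vv$ and its values on finite rectangles, which must be justified via $\sigma$-finiteness rather than finiteness, and each interchange of integrals (Tonelli) should be invoked with a nonnegative or $L^1(\nu)$ integrand. I would also be careful to carry out the self-adjointness argument of $(vi)\Rightarrow(iii)$ in $L^2(\nu)$ rather than in $L^2(\mu)$, since $P$ (and $R$) need not be bounded on $L^2(\mu)$.
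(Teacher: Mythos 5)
Your cycle is correct, and each link is justified by the right mechanism. The paper itself does not prove this theorem here --- it only records the statement and defers to \cite{BezuglyiJorgensen2018} --- so there is no in-text argument to compare against; judged on its own, your proof is a sound, essentially self-contained replacement. The organizing idea, that every one of (i), (iv), (vi) is literally the identity $d\rho(x,y)=c(x)P(x,dy)\,d\mu(x)=c(y)P(y,dx)\,d\mu(y)$ read at different levels of generality (rectangles of finite measure, the full product $\sigma$-algebra, the measure itself), is exactly the right reduction, and your care about the monotone-class step is warranted: the uniqueness argument needs an exhaustion $V\times V=\bigcup_n V_n\times V_n$ with $\rho(V_n\times V_n)\leq\nu(V_n)<\infty$, which is where the hypothesis $c\in\Lloc$ actually enters. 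The chain (vi)$\Rightarrow$(iii)$\Rightarrow$(vii)$\Rightarrow$(ii)$\Rightarrow$(i) is also correct, with the one ordering caveat you already flag implicitly: in (vi)$\Rightarrow$(iii) you must first derive $\nu P=\nu$ from $\rho(A\times V)=\rho(V\times A)$, since contractivity of $P$ on $L^2(\nu)$ (needed to extend the symmetric form from $\Dfin(\nu)$ to self-adjointness on all of $L^2(\nu)$) is itself a consequence of $\nu P=\nu$ via the Schwarz inequality for the kernels $P(x,\cdot)$. The only place where your argument leans on an external citation is (v)$\Leftrightarrow$(vi); since that equivalence is a two-line computation ($\int\chi_A R(\chi_B)\,d\mu=\rho(A\times B)$ on indicators, extended to $\FVB$ by linearity and monotone convergence), it would be cleaner to write it out rather than invoke Remark \ref{rem_on operators}(5), which in turn cites the earlier paper. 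This is a presentational point, not a gap.
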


We discuss the notion of reversibility in the following Remark where we
included several direct consequences of Definition \ref{def reversible MP}
and Theorem \ref{prop_reversible}. 

\begin{remark} (1) Let $x \mapsto P(x, \cdot) $ be a Borel field of 
probability measures over a standard Borel space $(V, \B)$. This field of
transition probabilities generates the Markov operator $P$ such that
$P(\mathbbm 1) = 1$. 
It follows from
Theorem \ref{prop_reversible} that one can define the notion 
of reversible Markov process $x \mapsto P(x, \cdot) $ with respect to a 
$\sigma$-finite measure $\nu$: It is said that $((x \mapsto P(x, \cdot)),
 \nu) $ is \textit{reversible} if $P$ is a self-adjoint operator in 
 $L^2(\nu)$. This definition is equivalent to the property
 $$
 \int_A P(x, B) \; d\nu = \int_B P(x, A)\; d\nu.
 $$
Equally, one can consider the notion of reversibility for 
$P(x, \cdot)$ with respect to a symmetric  measure $\rho$. Theorem 
\ref{prop_reversible} states the equivalence of these approaches.

(2) Based on (1), the following \textit{question} is raised naturally: 
\textit{Given $x \mapsto P(x, \cdot)$ as  above, under what condition  
the set 
$$
\mathcal S(P) := \{ \nu : P \ \mbox{is\ self-adjoint\ in}\ L^2(\nu)\}
$$
is non-empty?}

(3) The following observation is a direct consequence of 
Theorem \ref{prop_reversible}. Let $P(x, A) =  P(\chi_A)(x)$ be the
 probability kernel  defined by a normalized Markov operator $P$ 
 acting on Borel functions over $\sms$. To answer the question about the 
 existence of a $P$-invariant measure $\nu \sim \mu$ such that
 $(P, \nu)$ is reversible, it suffices  to construct a locally integrable
  function $c$ satisfying (\ref{eq def reversible P}).  It can be done by
 pointing out a symmetric measure $\rho $ such that $\rho_x(V) = c(x)$ 
 and the projection of  $\rho$ onto $V$ is the measure $\mu$.
 
 (4) There exists a stronger version of reversible Markov processes. 
 Let $P$ be a Markov operator acting on $\FVB$ such that, for any
 $A, B \in \Bfin(\mu)$, 
 $$
 \chi_A P(\chi_B) = \chi_B P(\chi_A).
 $$ 
 Then, for any positive Borel function
$c \in \Lloc$, the measure $d\nu(x) = c(x) d\mu(x) $ belongs to
$\mc S(P)$. 
Indeed, it suffices to define  the symmetric measure $\rho$ according to 
Theorem \ref{prop_reversible} (vi) and then apply statement (ii).

(5) We give here one more interpretation of the definition of 
reversible Markov processes. For this, we use notation introduced  in 
Section \ref{subsect Path space MP}. Let 
$$
\Omega = V \times V \times V \cdots$$
 be the path space of the Markov
process $(P_n)$, and let $X_n : \Omega \to V$ be the random variable 
defined by $X_n(\omega) = \omega_n$. Given a measure $\nu$ on $V$,
we can reformulate the definition of reversible Markov operator as follows:
$$
dist( X_0\ |\ X_1 \in A) = dist(X_1\ |\ X_0 \in A).
$$
The meaning of the above formula is clarified in Proposition 
\ref{lem symm distr}.

(6) Suppose now that  a non-symmetric measure $\rho$ is given on the
space $\vv$, i.e, $\rho(A \times B) \neq \rho(B\times A)$, in general. 
However, we will assume that $\rho$ is equivalent to $\rho\circ
\theta$ where $\theta(x, y) = (y, x)$. Then, using the same approach 
as above, we can define 
the following objects: margin measures 
$\mu_i := \rho\circ \pi_i^{-1}, i =1,2,$, fiber measures $d\rho_x(\cdot)$
and $d\rho^x(\cdot)$ (see Remark \ref{rem on symm meas}),  and 
functions $c_1(x) = \rho_x(V), c_2(x) = \rho^x(V)$.  

Define now the \textit{symmetric measure} $\rho^{\#}$ generated by 
$\rho$ as follows
$$\rho^{\#} := \dfrac{1}{2}(\rho + \rho\circ\theta).
$$
 Then
$$
\rho^{\#}(A \times B) =\frac{1}{2}(\rho(A\times B)+ \rho(B \times A)).
$$
Clearly, $\rho^{\#}$ is equivalent to $\rho$.

Let $E\subset \VtV$ be the support of $\rho$. Then $E^{\#} = E \cup 
\theta(E)$ is the  support of the symmetric measure  $\rho^{\#}$.  
The disintegration of $\rho 
= \int_V\rho_x\; d\mu_1(x)$ with respect to the partition 
$\{x\} \times E_x$ defines the 
disintegration of $\rho^{\#}$. For $\mu^{\#} := \dfrac{1}{2}(\mu_1 +
 \mu_2) $, we obtain that 
 $$
 \rho^{\#} = \int_V (\rho_x + \rho^x) \; d\mu^{\#}.
 $$

Having the symmetric measure $\rho^{\#}$ defined on $\vv$, 
we can introduce 
the operators $R^{\#}$ and $P^{\#}$ as in \eqref{eq def of R} and
\eqref{eq formula for P}. It turns out that, for $f\in \FVB$,
$$
R^{\#}(f)(x) = R_1(f)(x) + R_2(f)(x) 
$$
where
$$
R_1(f) = \int_V f(y) \; d\rho_x(y),\qquad R_2(f) = \int_V f(y) \; 
d\rho^x(y).
$$ 
Similarly, 
$$
P^{\#}(f)(x)  = \frac{1}{c^{\#}(x)} R^{\#}(f)(x)
$$
where
$$
c^{\#}(x) = \rho_x( V) + \rho^x(V).
$$ 
Then we can define the measure
$d\nu^{\#}(x) = c^{\#}(x) d\mu(x)$ such that the operator
$$
P^{\#} (f)(x) = \int_V f(y)  \frac{1}{c^{\#}(x)}\; d\rho^{\#}_x(y)
$$
is self-adjoint in $L^2(\nu^{\#})$. By Theorem \ref{prop_reversible},
we obtain that the Markov process generated by $x \mapsto 
P^{\#}(x, \cdot)$ is \textit{reversible} where $P^{\#}(x, A) = 
P^{\#}(\chi_A)(x)$. 

\end{remark}

\subsection{Properties of Markov operators} \label{subsect Properties MO}
In this subsection, we discuss  some properties of 
the Markov operator $P$, which is defined by relation 
(\ref{eq formula for P}). The operator $P$ is considered acting in Hilbert
 spaces  $L^2(\mu), L^1(\nu)$, and $\h_E$ where
$d\nu(x) = c(x)d\mu(x)$ and $\h_E$ is the energy space. 

We begin with the following simple observations whose proofs are obvious
and can be omitted.  Remind that 
$\Bfin(\mu)$ is the family  of Borel subsets of finite measure $\mu$, and 
$\Dfin = \Dfin(\mu)$ is the the linear subspace generated by the
 characteristic functions $\chi_A$, $A\in \Bfin$.

\begin{remark}\label{rem Bfin in spaces}
(1) If $c \in L^1_{\mathrm{loc}}(\mu)$, then 
$$
\Bfin(\mu) \subset \Bfin(\nu).
$$
The converse is not true. 

(2) We observe that if both functions, $c(x)$ and $c(x)^{-1}$ are in $\Lloc$,
then 
$$
\Bfin(\mu) = \Bfin(\nu).
$$

(3) The following property holds for  $c \in \Lloc$:
\be\label{eq_Dfin in 3 H sp}
\Dfin(\mu) \subset L^2(\mu) \cap L^2(\nu) \cap \h_E
\ee
(this should be understood that functions from $\Dfin$ are 
representatives of elements from $\h_E$).

(4) We recall that 
\be\label{eq norm chi_A in H}
\| \chi_A \|_{\h_E}^2 = \rho(A\times A^c) 
\ee
where $\rho$ is a symmetric measure used in the definition $\h_E$.
This fact is proved in \cite{BezuglyiJorgensen2018}.
%see Remark \ref{rem H depends on rho}.  
\end{remark} 

\begin{lemma}
If $c \in \Lloc$, then $\Dfin(\mu)$ is dense in $L^1(\nu)$ and $L^2(\nu)$.
\end{lemma}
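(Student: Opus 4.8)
The plan is to reduce the statement to the standard fact — already invoked in the paper — that $\Dfin(\nu)=\mathrm{Span}\{\chi_A:A\in\Bfin(\nu)\}$ is norm dense in $L^p(\nu)$ for $p\ge 1$, and then to show that each generator $\chi_A$, $A\in\Bfin(\nu)$, lies in the $L^p(\nu)$-closure of $\Dfin(\mu)$. Combining the two with the triangle inequality gives density of $\Dfin(\mu)$ in $L^1(\nu)$ and in $L^2(\nu)$ simultaneously (the argument treats $p=1$ and $p=2$ uniformly).

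First I would record the inclusion $\Dfin(\mu)\subset L^1(\nu)\cap L^2(\nu)$, which is exactly where the hypothesis $c\in\Lloc$ enters: for $A\in\Bfin(\mu)$ one has $\|\chi_A\|_{L^p(\nu)}^p=\int_A c\,d\mu<\infty$ for $p=1,2$ by definition of $\Lloc$ (this was already noted in \eqref{eq_Dfin in 3 H sp} for $p=2$). Next, using $\sigma$-finiteness of $\mu$, fix an increasing sequence $V_n\in\Bfin(\mu)$ with $\bigcup_n V_n=V$. Given $A\in\Bfin(\nu)$, put $A_n:=A\cap V_n$; then $A_n\subset V_n$ has finite $\mu$-measure, so $\chi_{A_n}\in\Dfin(\mu)$. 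Since $A\setminus A_n\downarrow\emptyset$ and $\nu(A)<\infty$, continuity of the measure $\nu$ from above gives
$$
\|\chi_A-\chi_{A_n}\|_{L^p(\nu)}^p=\nu(A\setminus A_n)\longrightarrow 0 .
$$
Hence $\chi_A$, and therefore all of $\Dfin(\nu)$, lies in the $L^p(\nu)$-closure of $\Dfin(\mu)$.

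Finally, I would close the argument: given $g\in L^p(\nu)$ and $\varepsilon>0$, choose $h\in\Dfin(\nu)$ with $\|g-h\|_{L^p(\nu)}<\varepsilon/2$ (density of $\Dfin(\nu)$, as in Section~\ref{sect Prelim}), then choose $\psi\in\Dfin(\mu)$ with $\|h-\psi\|_{L^p(\nu)}<\varepsilon/2$ by the previous paragraph, so $\|g-\psi\|_{L^p(\nu)}<\varepsilon$. There is essentially no obstacle here: the only nontrivial input is the inclusion $\Dfin(\mu)\subset L^p(\nu)$, which is precisely the content of $c\in\Lloc$; the remaining steps are soft measure theory (a $\sigma$-finite exhaustion plus dominated/monotone convergence), so the main point to get right is simply keeping track of which set algebra, $\Bfin(\mu)$ versus $\Bfin(\nu)$, each characteristic function belongs to.
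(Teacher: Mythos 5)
Your argument is correct and is essentially the paper's own (sketched) proof: both reduce to approximating $\chi_A$ for $A\in\Bfin(\nu)$ by characteristic functions of subsets of finite $\mu$-measure, using a $\sigma$-finite exhaustion and the fact that $c$ is $\mu$-integrable on such $A$. Your write-up is actually more complete than the paper's, which treats only the $L^1(\nu)$ case and leaves $L^2(\nu)$ implicit, whereas your uniform treatment of $p=1,2$ via $\nu(A\setminus A_n)\to 0$ covers both at once.
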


\begin{proof} (Sketch) We show the density of $\Dfin(\mu)$ in $L^1(\nu)$
only. It suffices to check that, for every $B \in \Bfin(\nu)$, the characteristic
function $\chi_B$ can be approximated in $L^1(\nu)$ by simple functions
from $\Dfin(\mu)$, i.e., for every $\varepsilon >0$, there exists some 
$s(x) \in \Dfin(\mu)$ such that $|| \chi_B - s ||_{L^1(\nu)} < \varepsilon$.
Without loss of generality, we can assume that $s(x) \leq \chi_B(x)$. Then
$$
|| \chi_B - s ||_{L^1(\nu)} = \int_V (\chi_B - s(x))\; d\nu(x)=
\int_B c(x) (1- s(x))\; d\mu(x).
$$
Since $c$ is $\mu$-integrable on $B$, one can take a subset $B_0 \subset
B$ such that 
$$
\int_B c \; d\mu - \int_{B_0} c \; d\mu < \varepsilon.
$$
Then result follows.
\end{proof}

Next, let $\rho$ be a symmetric measure on $\vv$, and let $P$ be the
 operator acting on bounded Borel functions by the formula
$$
P(f)(x) = \int_V f(y) P(x, dy)
$$
where $c(x)P(x, dy) = d\rho_x(y)$.

In the next statement we collect several properties of the Markov operator
$P$ considered in various spaces.

\begin{proposition} \label{lem P in L^1}
Let  $\sms$, $\nu$, and $\rho$ be as above. Then, for any 
$A\in \Bfin$,

(a) $P(\chi_A) \in L^1(\mu) \ \Longrightarrow \ P(\chi_A) \in L^2(\mu)$;

(b) $P(\chi_A) \in L^1(\mu) \ \Longleftrightarrow \ \dfrac{\rho_x(A)}
{c(x)} \in L^1(\mu) \ \Longrightarrow \ P(\chi_A) \in L^2(\mu)$;

(c) if the function $x \mapsto \int_V\dfrac{d\rho_x(y)}{c(y)} $ is locally
integrable, then $P$ is a densely defined operator in $L^2(\mu)$; 

(d) if $c \in \Lloc$, then 
$$
P(\chi_A) \in L^1(\nu) \cap L^2(\nu);
$$

(e) the measures $\mu$ and $\mu P$ are equivalent if and only if
the function $c^{-1}$ is integrable on $(E_x, \rho_x)$ for $\mu$-a.e.
$x \in V$. The Radon-Nikodym derivative can be found by the formula:
$$
\frac{d(\mu P)}{d\mu}(x)  = \int_V \frac{1}{c(y)} \; d\rho_x(y).
$$ 
\end{proposition}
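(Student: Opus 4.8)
The plan is to reduce all five assertions to two ingredients: an elementary pointwise quadratic bound for $P$ applied to indicator functions, and a single symmetry computation based on Lemma~\ref{lem symm measure via int}(1). Throughout, recall from \eqref{eq formula for P} that $P(\chi_A)(x)=\frac1{c(x)}\int_V\chi_A(y)\,d\rho_x(y)=\rho_x(A)/c(x)$.

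\textbf{The quadratic bound, giving (a), (b) and half of (d).} Since each $P(x,\cdot)$ is a probability measure and $0\le\chi_A\le\mathbbm 1$ with $P(\mathbbm 1)=\mathbbm 1$, the Cauchy--Schwarz (equivalently, Schwarz') inequality for the positive operator $P$ yields $P(\chi_A)(x)^2\le P(\chi_A^2)(x)=P(\chi_A)(x)$ for every $x$, hence $\|P(\chi_A)\|_{L^2(\lambda)}^2\le\|P(\chi_A)\|_{L^1(\lambda)}$ for any measure $\lambda$ on $\VB$. Taking $\lambda=\mu$ proves (a); for (b) the stated equivalence is immediate from the formula for $P(\chi_A)$ just recalled, and the implication into $L^2(\mu)$ is (a). The bound with $\lambda=\nu$ will supply the $L^2(\nu)$ half of (d).

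\textbf{The symmetry computation, giving (c) and (e).} I would establish that, for every $A\in\B$,
\begin{align*}
(\mu P)(A)
&=\int_V P(\chi_A)(x)\,d\mu(x)=\int_V\frac{\rho_x(A)}{c(x)}\,d\mu(x)=\iint_{\VtV}\frac{\chi_A(y)}{c(x)}\,d\rho(x,y)\\
&=\iint_{\VtV}\frac{\chi_A(x)}{c(y)}\,d\rho(x,y)=\int_A\Bigl(\int_V\frac1{c(y)}\,d\rho_x(y)\Bigr)\,d\mu(x),
\end{align*}
where the fourth equality is Lemma~\ref{lem symm measure via int}(1), extended from bounded to nonnegative measurable integrands by truncation and monotone convergence (needed since $1/c$ may be unbounded). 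Writing $g(x):=\int_V c(y)^{-1}\,d\rho_x(y)$, this identity already gives $\mu P\ll\mu$ with $\frac{d(\mu P)}{d\mu}=g$, the Radon--Nikodym formula of (e); and with $\mu(A)<\infty$ it gives $\|P(\chi_A)\|_{L^1(\mu)}=\int_A g\,d\mu$, so local $\mu$-integrability of $g$ forces $P(\chi_A)\in L^1(\mu)$, whence $P(\chi_A)\in L^2(\mu)$ by the quadratic bound. As $\Dfin(\mu)=\mathrm{Span}\{\chi_A:A\in\Bfin(\mu)\}$ is norm-dense in $L^2(\mu)$, this proves (c). For (d): if $c\in\Lloc$ then $\|P(\chi_A)\|_{L^1(\nu)}=\int_V\rho_x(A)\,d\mu(x)=\nu(A)=\int_A c\,d\mu<\infty$ by \eqref{eq def of nu} and Lemma~\ref{lem symm measure via int}(2), and the $\lambda=\nu$ bound finishes it.

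\textbf{Finishing (e) and the expected obstacle.} It remains to characterize when $\mu\sim\mu P$, and this is the one delicate point. Since $c(x)<\infty$ $\mu$-a.e.\ and $\rho_x$ is a nonzero measure, $g(x)>0$ $\mu$-a.e., so $\mu\ll\mu P$ is automatic; the genuine content is that $\mu P$ is a legitimate $\sigma$-finite measure exactly when $g(x)<\infty$ $\mu$-a.e. I would argue this equivalence by hand: if $g<\infty$ a.e., slicing $V$ along finite-$\mu$-measure sets and along the level sets $\{g\le n\}$ exhibits $\mu P$ as $\sigma$-finite and mutually absolutely continuous with $\mu$; conversely, if $g=\infty$ on a set of positive $\mu$-measure, $\mu P$ takes only the values $0$ and $\infty$ there and fails $\sigma$-finiteness. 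Since $g(x)<\infty$ $\mu$-a.e.\ is exactly the $\rho_x$-integrability of $c^{-1}$ on $E_x$ for $\mu$-a.e.\ $x$, (e) follows. The only obstacles I foresee are this $\sigma$-finiteness bookkeeping for $\mu P$ and the (routine) monotone-convergence justification of the symmetry step with the unbounded integrand $\chi_A(y)/c(x)$; everything else is a direct computation.
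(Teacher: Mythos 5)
Your proposal is correct and follows essentially the same route as the paper's own (sketched) proof: the Schwarz inequality $P(\chi_A)^2\le P(\chi_A)$ for (a), (b) and the $L^2(\nu)$ half of (d), and the single symmetry/Fubini computation $\iint \chi_A(y)c(x)^{-1}d\rho = \iint \chi_A(x)c(y)^{-1}d\rho$ for (c), (d), (e). Your added care about extending Lemma~\ref{lem symm measure via int}(1) to unbounded integrands by truncation and about the $\sigma$-finiteness of $\mu P$ in (e) goes slightly beyond the paper's sketch but does not change the argument.
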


\begin{proof} (Sketch) 
(a) The result follows from the Schwarz inequality for positive operators,
$$
P(\chi_A)^2 \leq P(\chi_A^2) = P(\chi_A).
$$ 

(b) The criterion for integrability of the function 
$P(\chi_A)$  is proved as follows:
$$
\ba 
\int_V P(\chi_A)(x)\; d\mu(x) = & \iint_{V\times V} \chi_A(y) P(x, dy)\;
d\mu(x) \\
= & \iint_{V\times V} \frac{\chi_A(y)}{c(x)}\; d\rho_x(y)d\mu(x)\\
= &\int_V \frac{\rho_x(A)}{c(x)}\; d\mu(x).
\ea
$$
It follows from (a) that the same computation can be used to show that
 $P(\chi_A)$ is in $L^2(\mu)$ whenever 
 $$
 \dfrac{\rho_x(A)} {c(x)} \in L^1(\mu).
 $$

(c) To prove this result, we refer to the proof of (b) and use  the symmetry
of the measure $\rho$:
$$
P(\chi_A) \in L^2(\mu) \ \Longleftarrow \ P(\chi_A) \in L^1(\mu) 
$$
and
$$
\ba
\int_V P(\chi_A)(x) \; d\mu(x) =  &  \iint_{V\times V} \frac{\chi_A(y)}
{c(x)}\; d\rho_x(y)d\mu(x)\\
= &  \iint_{V\times V} \frac{\chi_A(x)}{c(y)}\; d\rho_x(y)d\mu(x)\\
=& \int_A\left( \int_V \frac{\chi_A(x)}{c(y)}\; d\rho_x(y) \right) 
d\mu(x).\\
\ea
$$
It gives the desired statement. 

(d) Suppose $c(x) \in \Lloc$. Then, using the symmetry of the measure 
$\rho$ and relation (\ref{eq disint formula}), we obtain
$$
\ba 
\int_V P(\chi_A)(x)\; d\nu(x) =& \int_V \left(\int_V \chi_A (y) 
\frac{1}{c(x)} \; d\rho_x(y) \right) \; c(x) d\mu(x)\\
=& \iint_{V\times V}\chi_A(x) \; d\rho_x(y)d\mu(x)\\
=& \int_V \chi_A(x) c(x)\; d\mu(x) \\
= & \int_A c(x)\; d\mu(x) < \infty,
\ea
$$
i.e., $P(\chi_A) \in L^1(\nu)$. The fact that $P(\chi_A) \in L^2(\nu)$ 
is proved as in (a).

(e) The statement will follow from the following chain of equalities:
$$
\ba 
(\mu P)(A) = & \int_V \chi_A \; d(\mu P)\\
= & \int_V P(\chi_A) \; d\mu\\
= & \int_V \left( \int_V \chi_A(y) P(x, dy)\right) d\mu(x)\\
= & \iint_{VtV}  \chi_A(y) \frac{1}{c(x)} \; d\rho_x(y) d\mu(x)\\
= & \int_{V}  \chi_A(x)\left( \int_V \frac{1}{c(y)} \; d\rho_x(y)\right)
 d\mu(x)\\
 = & \int_{A} \left( \int_V \frac{1}{c(y)} \; d\rho_x(y)\right)
 d\mu(x)\\
 = & \int_A \frac{d(\mu P)}{d\mu}(x) \; d\mu(x)
\ea
$$
where 
$$
\frac{d(\mu P)}{d\mu}(x)  = \int_V \frac{1}{c(y)} \; d\rho_x(y).
$$
\end{proof}

Clearly, Proposition \ref{lem P in L^1} can be extended to functions from 
$\Dfin$.

\begin{lemma} \label{lem norm P(chi_A)}
 Let $P$ be a self-adjoint Markov operator in $L^2(\nu)$. 
Suppose that $c \in \Lloc$. Then, for $A \in \Bfin(\mu)$,  
\be\label{eq_nu-norm P^n}
|| P^n(\chi_A) ||^2_{L^2(\nu)} = \rho_{2n}(A\times A), \quad n\in \N,
\ee
where measures $\rho_n$ are defined in (\ref{eq_def rho_n}).
\end{lemma}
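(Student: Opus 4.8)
The plan is to reduce the claim to the self-adjointness of $P$ on $L^2(\nu)$ together with the definition \eqref{eq_def rho_n} of the measures $\rho_n$.

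First I would check that \eqref{eq_nu-norm P^n} is meaningful, i.e., that all the functions involved lie in $L^2(\nu)$. Since $A \in \Bfin(\mu)$ and $c \in \Lloc$, we have $\nu(A) = \int_A c\, d\mu < \infty$, so $\chi_A \in L^2(\nu)$ with $\|\chi_A\|^2_{L^2(\nu)} = \nu(A)$. By Proposition \ref{prop prop of R, P, Delta}(4) (and, for the first iterate, Proposition \ref{lem P in L^1}(d)) the operator $P$ is a contraction of $L^2(\nu)$, hence $P^n(\chi_A) \in L^2(\nu)$ for every $n$, and the left-hand side of \eqref{eq_nu-norm P^n} is finite; in particular $\rho_{2n}(A\times A) = \langle \chi_A, P^{2n}(\chi_A)\rangle_{L^2(\nu)}$ is well defined.

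Next I would invoke the standing hypothesis that $P$ is self-adjoint on $L^2(\nu)$ --- equivalently, by Theorem \ref{prop_reversible}, that the underlying symmetric measure $\rho$ is symmetric. Since $P$ is bounded and self-adjoint on $L^2(\nu)$, so is $P^n$, and $(P^n)^*P^n = P^{2n}$. Therefore $\| P^n(\chi_A)\|^2_{L^2(\nu)} = \langle P^n(\chi_A), P^n(\chi_A)\rangle_{L^2(\nu)} = \langle \chi_A, P^{2n}(\chi_A)\rangle_{L^2(\nu)}$, and the right-hand side is exactly $\rho_{2n}(A\times A)$ by \eqref{eq_def rho_n}. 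This yields \eqref{eq_nu-norm P^n}.

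I do not expect a genuine obstacle here: the only points needing care are the integrability bookkeeping that places $\chi_A$ and its $P$-iterates inside $L^2(\nu)$ (where self-adjointness lives), and the elementary fact that self-adjointness of the bounded operator $P$ passes to $P^n$ --- both already supplied by the results quoted above. For readers who prefer to avoid operator-theoretic language, one can instead unwind \eqref{eq_rho_n via P_N} to write $\rho_{2n}(A\times A) = \int_V \chi_A(x)\big(\int_V \chi_A(y)\,P_{2n}(x,dy)\big)\,d\nu(x)$ and split $P_{2n}$ through the Chapman--Kolmogorov relation $P_{2n}(x,\cdot) = \int_V P_n(y,\cdot)\,P_n(x,dy)$ together with the reversibility of $P_n$ from Theorem \ref{prop_reversible}(ii); but the self-adjointness route is the shortest.
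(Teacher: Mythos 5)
Your proof is correct and follows essentially the same route as the paper: the paper's argument is precisely the one-line computation $\| P^n(\chi_A)\|^2_{L^2(\nu)} = \langle P^n(\chi_A), P^n(\chi_A)\rangle_{L^2(\nu)} = \langle \chi_A, P^{2n}(\chi_A)\rangle_{L^2(\nu)} = \rho_{2n}(A\times A)$, using self-adjointness of $P$ on $L^2(\nu)$. Your additional integrability bookkeeping (placing $\chi_A$ and its iterates in $L^2(\nu)$) is a sensible supplement that the paper leaves implicit.
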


\begin{proof}
We recall that if $P$ is a self-adjoint operator in the space $L^2(\nu)$, 
then  $\nu P = \nu$.
Hence,
$$
\ba
 || P^n(\chi_A) ||^2_{L^2(\nu)}  
 = & \langle P^n(\chi_A), P^n(\chi_A)   \rangle_{L^2(\nu)}\\
 = & \langle \chi_A, P^{2n}(\chi_A)   \rangle_{L^2(\nu)}\\
 = & \rho_{2n}(A\times A).
\ea
$$
\end{proof}

\begin{corollary} In conditions of Lemma \ref{lem norm P(chi_A)}, we
have that, for all $n\in \N$,
$$
\int_A c\; d\mu = || \chi_A ||^2_{\h_E(\rho_{2n})} +
 || P^n(\chi_A) ||^2_{L^2(\nu)}. 
$$
\end{corollary}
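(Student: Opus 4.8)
The plan is to rewrite the energy norm $\| \chi_A \|^2_{\h_E(\rho_{2n})}$ as a value of the measure $\rho_{2n}$ and then peel off the diagonal block $A\times A$. First I would record that, by the Lemma listing the basic properties of the measures $\rho_n$ (each $\rho_n$ symmetric on $\VtV$, equivalent to $\rho$, with $\rho_x^{(n)}(V) = c(x)$, and $d\rho_n(x,y) = P_n(x,dy)\,d\nu(x)$), the measure attached to $\rho_{2n}$ via \eqref{eq def of nu} coincides with $\nu = c\mu$; since $c \in \Lloc$ we have $A \in \Bfin(\mu) \subset \Bfin(\nu)$. Applying the characteristic-function norm formula \eqref{eq norm chi_A in H} (equivalently \eqref{eq ||chi A||} of Theorem \ref{thm_stucture of energy space}), now with $\rho$ replaced by the symmetric measure $\rho_{2n}$, gives
$$
\| \chi_A \|^2_{\h_E(\rho_{2n})} = \rho_{2n}(A \times A^c).
$$

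Next I would use additivity of $\rho_{2n}$ on the disjoint decomposition $A\times V = (A\times A) \sqcup (A\times A^c)$ to write $\rho_{2n}(A\times A^c) = \rho_{2n}(A\times V) - \rho_{2n}(A\times A)$. For the first term, formula \eqref{eq_rho_n via P_N}, $d\rho_{2n}(x,y) = P_{2n}(x,dy)\,d\nu(x)$, together with $P_{2n}(x,V) = 1$, gives $\rho_{2n}(A\times V) = \nu(A) = \int_A c\,d\mu$. The second term is precisely \eqref{eq_nu-norm P^n} of Lemma \ref{lem norm P(chi_A)}, namely $\rho_{2n}(A\times A) = \| P^n(\chi_A) \|^2_{L^2(\nu)}$; this is where self-adjointness of $P$ on $L^2(\nu)$ and $\nu P = \nu$ enter, through $\langle \chi_A, P^{2n}(\chi_A)\rangle_{L^2(\nu)} = \langle P^n(\chi_A), P^n(\chi_A)\rangle_{L^2(\nu)}$. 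Substituting both evaluations into the display above and rearranging yields the asserted identity.

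There is no genuine obstacle beyond bookkeeping; the single point deserving a sentence of care is that $\h_E(\rho_{2n})$, the measure $\nu$, and the operator $P$ are all generated by $\rho$, so the standing hypotheses $c \in \Lloc$ and $A \in \Bfin(\mu)$ simultaneously make $A \in \Bfin(\nu)$ and render the energy-norm formula \eqref{eq norm chi_A in H} applicable to $\rho_{2n}$ for every $n$. Once this is observed, the corollary is exactly the rearranged identity $\rho_{2n}(A\times A) + \rho_{2n}(A\times A^c) = \rho_{2n}(A\times V) = \int_A c\,d\mu$.
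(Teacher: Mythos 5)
Your argument is correct and is essentially the paper's own proof: both decompose $\rho_{2n}(A\times A^c)=\rho_{2n}(A\times V)-\rho_{2n}(A\times A)$, identify $\rho_{2n}(A\times V)=\int_A c\,d\mu$ via $\rho_x^{(2n)}(V)=c(x)$, and invoke Lemma \ref{lem norm P(chi_A)} for the diagonal term. Your version is slightly more careful about why the $\chi_A$-norm formula applies to $\rho_{2n}$, but the route is the same.
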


\begin{proof}
Since $\rho_x^{(n)}(V) = c(x)$ for all $n\in \N$, we can easily
deduce the following equality from Lemma \ref{lem norm P(chi_A)}. 
We use  formula \eqref{eq norm chi_A in H} 
$$
\ba
|| \chi_A ||^2_{\h_E(\rho_n)} = & \rho_n(A \times A^c)\\
 = &\rho_n(A \times V) - \rho_n(A \times A)\\
  = &\int_A c\; d\mu -  \rho_n(A \times A).
\ea
$$ 
\end{proof}

\begin{remark}
It is interesting to compare formula (\ref{eq_nu-norm P^n}) with a similar
result for $|| P^n(\chi_A) ||^2_{\h_E}$ proved in 
\cite{BezuglyiJorgensen2018}, see also \eqref{eq ||chi A||} in Theorem 
\ref{thm_stucture of energy space}.  
$$
\| P^n(\chi_A)\|^2_{\h_E} = \rho_{2n}(A\times A) - \rho_{2n+1}
(A\times A), \qquad n \in \N.
$$
Hence, it follows that
$$
\| P^n(\chi_A)\|^2_{\h_E} =  || P^n(\chi_A) ||^2_{L^2(\nu)}  - 
\rho_{2n+1}(A\times A). 
$$
\end{remark}

\subsection{More on the  embedding operator $J$}  
In this subsection, we return to the study of the operator $J$ defined 
in \eqref{eq embedd J}, see Subsection \ref{subsect J}. We recall that 
the operator $J$ is an isometry if considered acting from 
$L^2(\nu)$ to $L^2(\rho)$, and it is an unbounded operator 
from $L^2(\mu)$ to $L^2(\rho)$. Here we focus on relations between 
$J$ and other operators we study in the paper. 

\begin{lemma}\label{lem P in L^2(rho)}
For any $A \in \Bfin(\mu)$, we have 
$$
|| J(P(\chi_A)) ||^2_{L^2(\rho)} \leq || \chi_A ||^2_{L^2(\nu)}.
$$
\end{lemma}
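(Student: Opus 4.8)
The plan is to reduce the inequality to two facts already recorded above: that $J$ acts isometrically from $L^2(\nu)$ into $L^2(\rho)$, and that $P$ is a contraction on $L^2(\nu)$.

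First I would check that $P(\chi_A)$ is a legitimate element of $L^2(\nu)$. Since $A \in \Bfin(\mu)$ and $c \in \Lloc$ is a standing hypothesis of this section, Proposition \ref{lem P in L^1}(d) gives $P(\chi_A) \in L^1(\nu) \cap L^2(\nu)$; in particular it lies in the domain on which $J$ is an isometry by Theorem \ref{prop embedd J}(1), so
$$
\| J(P(\chi_A)) \|^2_{L^2(\rho)} = \| P(\chi_A) \|^2_{L^2(\nu)}.
$$
Then the contractivity of $P$ on $L^2(\nu)$ from Proposition \ref{prop prop of R, P, Delta}(4) yields $\| P(\chi_A) \|_{L^2(\nu)} \leq \| \chi_A \|_{L^2(\nu)}$, and combining the two displays finishes the argument.

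An equivalent, slightly more self-contained route avoids even invoking the isometry of $J$: by the disintegration formula \eqref{eq disint formula},
$$
\| J(P(\chi_A)) \|^2_{L^2(\rho)} = \iint_{\VtV} P(\chi_A)^2(x)\, d\rho_x(y)\, d\mu(x) = \int_V P(\chi_A)^2(x)\, c(x)\, d\mu(x) = \| P(\chi_A) \|^2_{L^2(\nu)},
$$
and Schwarz' inequality for the positive operator $P$ gives $P(\chi_A)^2 \leq P(\chi_A^2) = P(\chi_A)$, so that, using $\nu P = \nu$ from Proposition \ref{prop prop of R, P, Delta}(3),
$$
\| P(\chi_A) \|^2_{L^2(\nu)} \leq \int_V P(\chi_A)\, d\nu = \int_V \chi_A\, d(\nu P) = \nu(A) = \| \chi_A \|^2_{L^2(\nu)}.
$$

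There is essentially no genuine obstacle here; the one point deserving a word of care is the membership $P(\chi_A) \in L^2(\nu)$ (equivalently $\nu(A) < \infty$), which is precisely where the hypothesis $A \in \Bfin(\mu)$ together with $c \in \Lloc$ is used.
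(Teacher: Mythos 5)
Your proposal is correct, and the second, self-contained route is essentially the paper's own proof: the paper likewise applies Schwarz' inequality for the positive operator $P$ to get $P(\chi_A)^2 \leq P(\chi_A)$ and then uses the symmetry of $\rho$ (equivalently, $\nu P = \nu$) to evaluate $\int_V P(\chi_A)\, d\nu = \nu(A)$. Your first route merely repackages the same two steps as citations to the isometry of $J$ on $L^2(\nu)$ and the contractivity of $P$ on $L^2(\nu)$, so there is nothing substantively different to compare.
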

\begin{proof}
Indeed, we use Schwarz' inequality for  $P$ to show that
$$
\ba 
\iint_{V\times V} J(P(\chi_A))^2(x, y)\; d\rho(x,y) = & 
\int_V P(\chi_A)^2(x)\; d\rho(x, y)\\
\leq  &  \int_V P(\chi_A)(x)\; d\rho(x, y)\\
= &\int_V c(x) P(\chi_A)(x)\; d\mu(x)\\
= & \iint_{\VtV} \chi_A(y) \; d\rho_x(y) d\mu(x)\\
= &  \iint_{\VtV} \chi_A(x) \; d\rho_x(y) d\mu(x)\\
= & \int_A c(x) \; d\mu(x)\\
= &|| \chi_A ||^2_{L^2(\nu)}. 
\ea
$$
\end{proof}

As an illustration of properties of this embedding $J$, we note that the
 function $J(c^{-1})(x, y)$ is not integrable with  respect to $\rho$ but 
 is locally integrable. 

Another useful relation that compares norms of functions is contained  in
the following inequality.
\begin{lemma}
Let $f$ be a function from the finite energy space such that  $f$ and 
$\Delta(f)$ belong to $L^2(\mu)$. Then 
$$
|| Jf ||^2_{L^2(\rho)}  \geq \frac{1}{2}|| f ||^2_{\h_E}.
$$
\end{lemma}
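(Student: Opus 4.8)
The idea is to reduce everything to the self-adjoint operator $P$ on $L^2(\nu)$ and exploit the spectral bound $\mathrm{spec}(P)\subset[-1,1]$. First I would record the identity $\|Jf\|^2_{L^2(\rho)}=\|f\|^2_{L^2(\nu)}$, which is exactly computation \eqref{eq_J in mu-space}: disintegrating $\rho$ over $\mu$ and using $c(x)=\rho_x(V)$ gives $\iint (Jf)^2\,d\rho=\int_V f^2(x)c(x)\,d\mu(x)$. If this quantity is $+\infty$ there is nothing to prove, so I may assume $f\in L^2(\nu)$, in which case all the integrals below are genuinely (absolutely) convergent.

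Next I would rewrite the energy norm in operator form. Since $f$ and $\Delta(f)$ lie in $L^2(\mu)$, Proposition \ref{prop prop of R, P, Delta}(6) gives $\|f\|^2_{\h_E}=\int_V f\,\Delta(f)\,d\mu$. Using $\Delta=cI-R$ (see \eqref{eq Delta via R}) and $R(f)=c\,P(f)$, together with $d\nu=c\,d\mu$, this becomes
\[
\|f\|^2_{\h_E}=\int_V c f^2\,d\mu-\int_V f\,R(f)\,d\mu
=\|f\|^2_{L^2(\nu)}-\langle f,P(f)\rangle_{L^2(\nu)}
=\langle f,(I-P)f\rangle_{L^2(\nu)}.
\]
(Equivalently one can expand $\tfrac12\iint(f(x)-f(y))^2\,d\rho$ directly, using the symmetry of $\rho$ to merge the two square terms into $\int cf^2\,d\mu$ and recognizing the cross term as $\iint f(x)f(y)\,d\rho=\langle f,R(f)\rangle_{L^2(\mu)}$.)

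Combining the two displays, $\|Jf\|^2_{L^2(\rho)}=\|f\|^2_{L^2(\nu)}$ and $\|f\|^2_{\h_E}=\langle f,(I-P)f\rangle_{L^2(\nu)}$, I get
\[
\|Jf\|^2_{L^2(\rho)}-\tfrac12\|f\|^2_{\h_E}
=\|f\|^2_{L^2(\nu)}-\tfrac12\langle f,(I-P)f\rangle_{L^2(\nu)}
=\tfrac12\langle f,(I+P)f\rangle_{L^2(\nu)}.
\]
Now $P$ is a self-adjoint contraction on $L^2(\nu)$ with spectrum in $[-1,1]$ by Proposition \ref{prop prop of R, P, Delta}(3),(5), so $I+P$ is a positive operator and the right-hand side is $\geq 0$, which is the claim.

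\textbf{Main obstacle.} The only delicate point is integrability bookkeeping when moving between $L^2(\mu)$, $L^2(\nu)$, and $L^2(\rho)$: one must make sure the cross term $\iint f(x)f(y)\,d\rho$ is absolutely integrable (not merely a conditionally convergent difference) before identifying it with $\langle f,P(f)\rangle_{L^2(\nu)}$. The hypothesis $f,\Delta(f)\in L^2(\mu)$ is precisely what licenses the use of Proposition \ref{prop prop of R, P, Delta}(6), and the reduction to the finite case $f\in L^2(\nu)$ (via $\|Jf\|^2_{L^2(\rho)}=\|f\|^2_{L^2(\nu)}$) removes the remaining convergence worries. Everything else is the elementary positivity fact $I+P\geq 0$.
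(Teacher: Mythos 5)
Your proof is correct and follows essentially the same route as the paper: both start from the identity $\|Jf\|^2_{L^2(\rho)}=\int_V f^2c\,d\mu=\|f\|^2_{L^2(\nu)}$ and then invoke the inequality $\|f\|^2_{L^2(\nu)}\geq\tfrac12\langle f,\Delta f\rangle_{L^2(\mu)}$ together with Proposition \ref{prop prop of R, P, Delta}(6). The only difference is that the paper outsources that inequality to a citation (Corollary 7.4 of \cite{BezuglyiJorgensen2018}), whereas you actually prove it by rewriting it as $\tfrac12\langle f,(I+P)f\rangle_{L^2(\nu)}\geq 0$ and using self-adjointness of $P$ on $L^2(\nu)$ with spectrum in $[-1,1]$ — a welcome, self-contained addition.
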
 
\begin{proof}
The proof  follows from  \cite[Corollary 7.4]{BezuglyiJorgensen2018}
and Proposition \ref{prop prop of R, P, Delta} (6):
$$
\ba 
\iint_{V\times V} (Jf)^2(x, y)\; d\rho(x, y) = & \iint_{V\times V} f^2(x)\; 
d\rho_x(y) d\mu(x) \\
=& \int_V f^2(x) c(x) \; d\mu(x) \\
\geq  &\frac{1}{2}\langle f, \Delta f\rangle_{L^2(\mu)}\\
= & \frac{1}{2}|| f ||^2_{\h_E}.
\ea
$$
\end{proof}

In the remaining part of this section, we consider the Markov operator $P$
 as an operator acting on functions from the energy space $\h_E$.

\begin{proposition} Assume that $c \in \Lloc$. Then, for every $A\in
 \Bfin(\mu)$, we have
$$
(JP)(\chi_A)(x, y) \in \h_E.
$$
\end{proposition}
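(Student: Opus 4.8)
The plan is to note first that $(JP)(\chi_A)$, as defined by \eqref{eq embedd J}, is the function $(x,y)\mapsto P(\chi_A)(x)$, which depends only on the first coordinate; so the assertion is to be read as $P(\chi_A)\in\h_E(\rho)$, i.e.\ $\iint_{\VtV}(P(\chi_A)(x)-P(\chi_A)(y))^2\,d\rho(x,y)<\infty$. Observe that $u:=P(\chi_A)$ is a bona fide Borel function on $V$, finite $\mu$-a.e.\ because $c(x)>0$ $\mu$-a.e., so it makes sense to speak of its energy. The strategy is to bound the energy of $u$ by the $L^2(\rho)$-norm of $Ju$, a quantity already under control via Lemma \ref{lem P in L^2(rho)}.

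Concretely, I would expand $2\|u\|_{\h_E}^2=\iint_{\VtV}(u(x)-u(y))^2\,d\rho$ and use the symmetry of $\rho$ to merge the two equal contributions coming from $u(x)^2$ and $u(y)^2$, obtaining $\|u\|_{\h_E}^2=\iint_{\VtV}u(x)^2\,d\rho-\iint_{\VtV}u(x)u(y)\,d\rho$. The first term is exactly $\|Ju\|_{L^2(\rho)}^2$, which by Lemma \ref{lem P in L^2(rho)} is at most $\|\chi_A\|_{L^2(\nu)}^2=\int_A c\,d\mu$, and this is finite since $A\in\Bfin(\mu)$ and $c\in\Lloc$. The cross term $\iint_{\VtV}u(x)u(y)\,d\rho$ is nonnegative, because $u=P(\chi_A)\ge 0$ by positivity of the Markov operator $P$ (Remark \ref{rem_on operators}, Proposition \ref{prop prop of R, P, Delta}) and $\rho$ is a positive measure; hence it may simply be discarded, giving $\|P(\chi_A)\|_{\h_E}^2\le\int_A c\,d\mu<\infty$, which is the claim.

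The only point deserving care is the legitimacy of splitting the energy integral into separate pieces, i.e.\ that no $\infty-\infty$ situation arises. This is immediate: $\iint_{\VtV}u(x)^2\,d\rho=\|Ju\|_{L^2(\rho)}^2<\infty$ by Lemma \ref{lem P in L^2(rho)}, and then $0\le\iint_{\VtV}u(x)u(y)\,d\rho\le\iint_{\VtV}u(x)^2\,d\rho<\infty$ by the arithmetic--geometric mean inequality together with the symmetry of $\rho$. Alternatively, one can bypass the identity entirely and use the crude pointwise bound $(u(x)-u(y))^2\le 2u(x)^2+2u(y)^2$, which at once yields $\|P(\chi_A)\|_{\h_E}^2\le 2\int_A c\,d\mu<\infty$ at the cost of a worse constant. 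In either form there is no genuine obstacle: the whole content of the proposition is that the $L^2(\nu)$-contractivity of $P$ — already recorded in Lemma \ref{lem P in L^2(rho)} — dominates the energy norm of $P(\chi_A)$.
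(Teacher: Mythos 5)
Your proof is correct and follows essentially the same route as the paper: the same splitting of the energy norm into $\iint u(x)^2\,d\rho - \iint u(x)u(y)\,d\rho$ with $u = P(\chi_A)$, and the same key bound $\iint u(x)^2\,d\rho \le \int_A c\,d\mu$ coming from the Schwarz inequality for $P$ (which is exactly Lemma \ref{lem P in L^2(rho)}, re-derived inline in the paper). The only difference is that you supply the finiteness of the cross term explicitly (the paper leaves it to the reader), which is a welcome addition rather than a departure.
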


\begin{proof}
We need to show that the energy norm of $J(P(\chi_A))$ is finite. 
By Theorem \ref{thm_stucture of energy space},  we find that
$$
\ba 
|| (JP)(\chi_A) ||^2_{L^2(\rho)} =& \frac{1}{2} \iint_{VtV} (P(\chi_A)(x) -
 P(\chi_A)(y))^2\; d\rho(x, y)\\
= & \iint_{VtV} (P(\chi_A)^2(x) - P(\chi_A)(x) P(\chi_A)(y))\; d\rho(x, y).
\ea
$$
To see that the last integral is finite, we first show that $(JP)(\chi_A)$ is 
in $L^2(\rho)$:
$$
\ba 
\iint_{\VtV} P(\chi_A)^2(x) \; d\rho(x, y) \leq & 
\iint_{\VtV} P(\chi_A)(x) \; d\rho_x(y) d\mu(x)\\
= & \int_{V} P(\chi_A)(x)c(x)\; d\mu(x)\\
= & \nu(A)\\
= & \int_A c(x) \; d\mu(x).
\ea
$$
The latter is finite. 

Similarly, one can check that $\iint_{\VtV} P(\chi_A)(x) P(\chi_A)(y)\; d
\rho(x, y)$ is also finite. We leave the proof for the reader.
\end{proof}

Consider a new operator, denoted by $\partial$, which acts from the 
energy space $\h_R$ to $L^2(\rho)$:
\be\label{eq def drop op}  
(\partial f)(x, y) = \frac{1}{\sqrt 2}(f(x) - f(y)),\qquad f \in \h_E 
\ee
Remark that in the theory of electrical networks the analogous transformation
is called a voltage drop operator. 

\begin{lemma}\label{lem d is isom}
The operator $\partial : \h_E \to L^2(\rho)$ defined by 
(\ref{eq def drop op}) is an isometry. 
\end{lemma}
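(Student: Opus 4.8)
The plan is to compute the $L^2(\rho)$-norm of $\partial f$ directly from the definition and recognize the result as the energy norm. First I would write, for $f \in \h_E$,
$$
\| \partial f \|^2_{L^2(\rho)} = \iint_{\VtV} (\partial f)^2(x,y) \; d\rho(x,y) = \frac{1}{2} \iint_{\VtV} (f(x) - f(y))^2 \; d\rho(x,y),
$$
which is exactly $\| f \|^2_{\h_E}$ by the definition of the norm in \eqref{eq norm in H_E}. So the isometry property is essentially immediate once the integrand is expanded.

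The only substantive points to address are well-definedness and the passage to the quotient. Since $\h_E$ is the quotient space obtained by identifying functions that differ by a constant (see Remark \ref{rem H depends on rho}), I would check that $\partial$ is well-defined on equivalence classes: if $f_1 - f_2 = \mathrm{const}$, then $(f_1(x) - f_1(y)) - (f_2(x) - f_2(y)) = 0$, so $\partial f_1 = \partial f_2$ pointwise, hence $\partial$ descends to the quotient. Linearity of $\partial$ is clear from \eqref{eq def drop op}. Finiteness of $\| \partial f \|_{L^2(\rho)}$ for $f \in \h_E$ is precisely the membership condition \eqref{eq def f from H}, so $\partial$ does indeed map into $L^2(\rho)$.

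I expect no real obstacle here — this is one of the cleanest statements in the section, and the proof is a one-line unfolding of definitions. The only mild subtlety worth a sentence is that $\partial$ need not be surjective onto $L^2(\rho)$: its range consists of the antisymmetric "gradient-type" functions on $\VtV$, so "isometry" is meant in the sense of an isometric embedding (norm-preserving linear map), not a unitary. If desired, one could additionally remark that $\partial$ intertwines the relevant structures, e.g. that $\partial = \frac{1}{\sqrt 2}(J - J\theta)$ where $\theta$ is the flip and $J$ is the embedding of \eqref{eq embedd J}, but this is not needed for the isometry claim itself.
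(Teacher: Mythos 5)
Your proposal is correct and is essentially the paper's own proof: both simply expand $(\partial f)^2(x,y) = \tfrac{1}{2}(f(x)-f(y))^2$, integrate against $\rho$, and identify the result with the energy norm \eqref{eq norm in H_E}. The extra remarks on well-definedness modulo constants and on "isometry" meaning isometric embedding are sensible but not part of the paper's (one-line) argument.
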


\begin{proof}
The proof is obvious because
$$
|| f ||^2_{\h_E} = \frac{1}{2}\iint_{VtV} (f(x) - f(y))^2 \; d\rho(x,y) =
|| (\partial f) ||^2_{L^2(\rho)}. 
$$
\end{proof}

Since $J : L^2(\nu) \to L^2(\rho)$ is an isometry, then the co-isometry
$J^*$ sends $L^2(\rho)$ to $L^2(\nu)$ according to the 
formula
$$
(J^*g) (x) = \int_V g(x, \cdot) \; P(x, \cdot)
$$
where $g \in L^2(\rho)$. 

In the following proposition, we formulate a relation between operators
$P$, $J^*$, and $\partial$.

\begin{proposition}\label{prop diagram commutes}
The following diagram commutes:
$$
\begin{array}[c]{ccc}
 \mathcal H_E  &\  \stackrel{\wt \Delta}{\longrightarrow} &  \ L^2(\nu)\\
\ \ \ \  \searrow\scriptstyle{\partial} 
&&\nearrow\scriptstyle{J^*}\\
&  L^2(\rho) &
\end{array}
$$
where $\wt \Delta = (\sqrt{2} c)^{-1} \Delta = (\sqrt{2})^{-1} (I - P)$.
\end{proposition}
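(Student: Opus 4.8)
The plan is to verify the commutation $J^* \circ \partial = \wt\Delta$ by direct computation on a function $f \in \h_E$, evaluating $(J^*\partial f)(x)$ via the explicit formula for $J^*$ from Theorem~\ref{prop embedd J}(2) and comparing with the two given expressions for $\wt\Delta$. The key identities I would use are $(\partial f)(x,y) = \tfrac{1}{\sqrt 2}(f(x)-f(y))$, the action $(J^*g)(x) = \int_V g(x,y)\,P(x,dy)$, the fact that $\int_V P(x,dy) = 1$ (so that $\int_V f(x)\,P(x,dy) = f(x)$), and $P(f)(x) = \int_V f(y)\,P(x,dy)$, together with $\Delta = c(I-P)$ from \eqref{eq Delta via R}.

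\textbf{Main computation.} For $f \in \h_E$ and $x \in V$, I would write
$$
(J^*\partial f)(x) = \int_V (\partial f)(x,y)\; P(x,dy)
= \frac{1}{\sqrt 2}\int_V \big(f(x) - f(y)\big)\; P(x,dy)
= \frac{1}{\sqrt 2}\Big(f(x) - P(f)(x)\Big),
$$
using $\int_V P(x,dy)=1$ and the definition \eqref{eq formula for P} of $P$. This is exactly $(\sqrt 2)^{-1}(I-P)(f)(x) = \wt\Delta(f)(x)$, and since $\Delta = c(I-P)$ we also have $(\sqrt 2)^{-1}(I-P) = (\sqrt 2\, c)^{-1}\Delta$, so both stated forms of $\wt\Delta$ agree. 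This establishes $J^*\circ\partial = \wt\Delta$, i.e.\ the triangle commutes.

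\textbf{Codomain check.} The one point that needs a word of justification is that $\wt\Delta$ really maps $\h_E$ into $L^2(\nu)$, so that the diagram is well-posed. Here I would observe that $\partial : \h_E \to L^2(\rho)$ is an isometry by Lemma~\ref{lem d is isom}, and $J^* : L^2(\rho) \to L^2(\nu)$ is a co-isometry (hence bounded, norm $\le 1$) by Theorem~\ref{prop embedd J}. Therefore $\wt\Delta = J^*\partial$ is automatically a well-defined contraction from $\h_E$ to $L^2(\nu)$, and in particular $\|\wt\Delta f\|_{L^2(\nu)} \le \|\partial f\|_{L^2(\rho)} = \|f\|_{\h_E}$. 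This also reconciles with Proposition~\ref{prop prop of R, P, Delta}(6): for $f$ with $f,\Delta f \in L^2(\mu)$ one has $\|f\|_{\h_E}^2 = \int_V f\,\Delta(f)\,d\mu$, consistent with the factorization.

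\textbf{Expected obstacle.} There is essentially no hard step here --- the proof is a one-line integral identity once the formula for $J^*$ is in hand. The only mild subtlety is bookkeeping: making sure the interchange of integration implicit in writing $\int_V f(y)\,P(x,dy) = P(f)(x)$ is legitimate for a general $f\in\h_E$ (not a priori in $L^1(P(x,\cdot))$), which is why I route the well-definedness through the isometry/co-isometry argument above rather than through a naive pointwise estimate. Modulo that remark, the commutativity is immediate from the definitions of $\partial$, $J^*$, and $P$.
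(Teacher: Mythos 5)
Your proof is correct and follows essentially the same route as the paper: both evaluate $(J^*\partial f)(x)=\tfrac{1}{\sqrt2}\int_V(f(x)-f(y))\,P(x,dy)=\tfrac{1}{\sqrt2}(f(x)-P(f)(x))$ directly from the formulas for $J^*$ and $\partial$. Your added codomain check (that $\wt\Delta=J^*\partial$ is automatically a contraction $\h_E\to L^2(\nu)$ because $\partial$ is an isometry and $J^*$ a co-isometry) is a useful supplement the paper omits.
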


\begin{proof}
The proof is mainly based on Theorem \ref{prop embedd J} and
the definition of $\partial$. We have
$$
\ba 
(J^*\partial f)(x) = & \frac{1}{\sqrt 2} J^*(f(x) - f(y))\\
= & \frac{1}{\sqrt 2} \int_V (f(x) - f(y)) \; P(x, dy)\\
= & \frac{1}{\sqrt 2} (f(x) - P(f)(x))\\
= & \frac{1}{\sqrt{2}} c(x) \Delta(f)(x).
\ea
$$
\end{proof}

In the next statement, we present several  properties of the operator 
$I - P$.

\begin{corollary}
(1) 
$$
(I - P) \h_E \subset L^2(\nu), 
$$ 

(2) The operator  $I -P$ acting from $\h_E$ to $L^2(\nu)$ is contractive. 

(3) For the operator  $\Delta = c(I - P)$, the following holds
$$
\Delta(\h_E) \subset c L^2(\nu).
$$
\end{corollary}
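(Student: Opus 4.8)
The plan is to read off all three items from the commuting diagram of Proposition \ref{prop diagram commutes}, combined with two isometry facts already in hand: Lemma \ref{lem d is isom}, which says the drop operator $\partial : \h_E \to L^2(\rho)$ is an isometry, and Theorem \ref{prop embedd J}, which says $J : L^2(\nu) \to L^2(\rho)$ is an isometry, so that its adjoint $J^* : L^2(\rho) \to L^2(\nu)$ is a coisometry, in particular a contraction. Granting those, the corollary is essentially a statement about a composition of maps whose mapping and norm behavior is already known.

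For (1): given $f \in \h_E$, Lemma \ref{lem d is isom} gives $\partial f \in L^2(\rho)$, and $J^*$ is defined on all of $L^2(\rho)$, so $J^*\partial f \in L^2(\nu)$. By Proposition \ref{prop diagram commutes}, $J^*\partial f = \wt\Delta f = \tfrac{1}{\sqrt 2}(I-P)(f)$, hence $(I-P)(f) = \sqrt 2\, J^*\partial f \in L^2(\nu)$; this is $(I-P)\h_E \subset L^2(\nu)$. For (2): from the same identity and $\|\partial f\|_{L^2(\rho)} = \|f\|_{\h_E}$, $\|J^*\|\le 1$, we obtain $\|(I-P)f\|_{L^2(\nu)} = \sqrt 2\,\|J^*\partial f\|_{L^2(\nu)} \le \sqrt 2\,\|f\|_{\h_E}$; equivalently the normalized operator $\wt\Delta = \tfrac{1}{\sqrt 2}(I-P) = J^*\partial$ is a genuine contraction, being a coisometry composed with an isometry. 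I would also record the self-contained one-line route: for $f \in \h_E$ one has $(I-P)f(x) = \int_V (f(x)-f(y))\,P(x,dy)$, so Jensen's inequality for the probability measure $P(x,\cdot)$ gives $((I-P)f)^2(x) \le \int_V (f(x)-f(y))^2\,P(x,dy)$; integrating against $d\nu = c\,d\mu$ and using $c(x)P(x,dy) = d\rho_x(y)$ converts the right-hand side into $\iint_{\VtV}(f(x)-f(y))^2\,d\rho(x,y) = 2\|f\|_{\h_E}^2$. For (3): by \eqref{eq Delta via R}, $\Delta = c(I-P)$, so for $f \in \h_E$ we have $\Delta f = c\cdot\big((I-P)f\big)$ with $(I-P)f \in L^2(\nu)$ by (1); hence $\Delta f \in c\,L^2(\nu)$, which is $\Delta(\h_E)\subset c\,L^2(\nu)$.

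The only genuinely delicate point is that Proposition \ref{prop diagram commutes} was established by a pointwise computation that is transparent for bounded Borel $f$, whereas here $f$ ranges over all of $\h_E$; so I would first make sure the identity $J^*\partial f = \tfrac{1}{\sqrt 2}(I-P)f$ — and indeed the meaning of $Pf$ and $J^*\partial f$ for such $f$ — persists for arbitrary $f \in \h_E$. This is handled either by the direct Jensen computation above, which requires no limiting argument, or by approximating $f$ in the $\h_E$-norm by elements of $\Dfin(\mu)$ (dense in $\h_E$ by Theorem \ref{thm_stucture of energy space}) and passing to the limit using the isometry of $\partial$ and the boundedness of $J^*$. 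Once that is in place, (1)–(3) are immediate, and no further obstacle arises.
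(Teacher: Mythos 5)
Your proof is correct, and for parts (1) and (3) it is the same argument as the paper's: (1) is read off the commuting diagram of Proposition \ref{prop diagram commutes} together with the isometry of $\partial$ and the boundedness of $J^*$, and (3) is immediate from $\Delta = c(I-P)$.

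For part (2) your route differs slightly in presentation but not in substance. The paper quotes the identity
$\| f \|^2_{\h_E} = \tfrac{1}{2}\bigl(\| f - P(f)\|^2_{L^2(\nu)} + \int_V \mathrm{Var}_x (f \circ X_1)\, d\nu \bigr)$
and drops the nonnegative variance term; you instead apply Jensen's inequality for the probability measure $P(x,\cdot)$, i.e. $\bigl((I-P)f\bigr)^2(x) \le \int_V (f(x)-f(y))^2\, P(x,dy)$, and integrate against $\nu$. These are literally the same estimate: the variance term in the paper's identity is exactly the Jensen defect, so both yield $\|(I-P)f\|_{L^2(\nu)} \le \sqrt{2}\,\|f\|_{\h_E}$ (equivalently, that $\wt\Delta = \tfrac{1}{\sqrt 2}(I-P) = J^*\partial$ is a genuine contraction, which is the operator-theoretic reading you also give). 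What your version buys is self-containedness — the paper's norm formula is cited without proof from the path-space discussion, whereas your Jensen computation needs only the disintegration $c(x)P(x,dy)\,d\mu(x) = d\rho(x,y)$ — and your closing remark about extending the pointwise identity $J^*\partial f = \tfrac{1}{\sqrt 2}(I-P)f$ from bounded Borel functions to all of $\h_E$ addresses a point the paper passes over in silence. No gaps.
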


\begin{proof}

Assertion (1) is a direct consequence of Proposition 
\ref{prop diagram commutes} 
(this result was already mentioned in \cite{BezuglyiJorgensen2018}). 

To see that (2) holds, we recall the formula for the 
norm of a function in the finite energy space $\h_E$:
$$
\| f \|^2_{\h_E} =  \frac{1}{2}\left(\| f - P(f)\|^2_{L^2(\nu)} 
+ \int_V \mathrm{Var}_x (f \circ X_1)\; d\nu \right),
$$
where the meaning of random variables $X_n$ is explained in Section
\ref{subsect Path space MP} below.

(3) is obvious. 
\end{proof}

\section{Transient Markov processes and symmetric measures}
\label{subsect Path space MP}

\subsection{Path-space measure}
We denote by 
$\Omega$ the infinite Cartesian product  $V\times V \times \cdots = 
V^{\N_0}$.  Let $(X_n (\omega): n = 0,1,...)$ be the 
 sequence of random variables  $X_n : \Omega \to V$ such that 
  $X_n(\omega) =  \omega_n$. We call $\Omega$ as 
the path space of the Markov process $(P_n)$. 
 Let  $\Omega_x, x \in V,$ be the set of infinite paths beginning at $x$:
$$
\Omega_x := \{\omega\in \Omega : X_0(\omega) = x\}.
$$
Clearly, $\Omega = \coprod_{x\in V}  \Omega_x$. 

A subset $\{\omega \in
 \Omega : X_0(\omega) \in A_0, ... X_k(\omega) \in A_k\}$ is called 
 a \textit{cylinder set} defined by Borel sets $A_0, A_1, ..., A_k$
 taken from $\B$, $k \in \N_0$.  
 The collection of cylinder sets generates the $\sigma$-algebra 
 $\mathcal C$ of Borel subsets of $\Omega$, and $(\Omega, \mc C)$
 is a standard Borel space. 
Then  the functions $X_n : \Omega \to V$ are Borel. 

On the measurable space $(\Omega, \mathcal C)$, define 
a $\sigma$-finite measure $\lambda$ by 
\be \label{eq def lambda}
\lambda := \int_V \mathbb P_x \; d\nu(x)
\ee
($\la$ is infinite if and only if the measure $\nu$ is infinite).

Denote by 
 $\mathcal{F}_{\leq n}$ the increasing  sequence of $\sigma$-subalgebras 
  such that  $\mathcal{F}_{\leq n}$ is the smallest
 subalgebra for which the functions $X_0, X_1, ... , X_n$ are Borel. By
 $\mathcal F_n$, we denote the $\sigma$-subalgebra $X_n^{-1}(\B)$. 
 Since $X_n^{-1}(\B)$ is a $\sigma$-subalgebra of $\mc C$, there exists 
 a  projection 
 $$
 E_n : L^2(V, \mc C, \la) \to L^2(\Omega, X_n^{-1}(\B), \la).
 $$ 
The projection  $E_n$ is called the \textit{conditional expectation} 
with respect to $X_n^{-1}(\B)$ and satisfies the property:
\be\label{eq cond exp E_n}
E_n(f\circ X_n) = f\circ X_n. 
\ee
 
 Define a probability measure $\mathbb P_x$ on $\Omega_x$. For 
 a cylinder set $(A_1, ... , A_n)$ from $\mathcal F_{\leq n}$  we set
\be\label{eq meas P_x} 
 \mathbb P_x(X_1 \in A_1, ... , X_n \in A_n) 
= \int_{A_{1}}\cdots \int_{A_{n-1}} P(y_{n-1}, A_n) P(y_{n-2},  dy_{n-1})
\cdots  P(x, dy_1).
 \ee
Then $\mathbb P_x$ extends to the Borel sets on $\Omega_x$ by  the
 Kolmogorov extension theorem \cite{Kolmogorov1950}.

The values of $\mathbb P_x$ can be written as 
\be\label{eq meas P_x 2}
 \mathbb P_x(X_1 \in A_1, ... , X_n \in A_n) = 
 P(\chi_{A_1} P(\chi_{A_2}P(\ \cdots\ P(\chi_{A_{n-1}} P(\chi_{A_n})) 
 \cdots )))(x).
\ee
The joint distribution of the random variables $X_i$ is given by 
\be\label{eqjoint distr}
d\mathbb P_x(X_1, ... , X_n)^{-1} = P(x, dy_1) P(y_1, dy_2) \cdots 
P(y_{n-1}, dy_n).
\ee

\begin{lemma}\label{lem st meas space}
The measure space $(\Omega_x, \mathbb P_x)$ is a standard 
probability measure space for $\mu$-a.e. $x\in V$. 
\end{lemma}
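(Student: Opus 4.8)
The plan is to exhibit $(\Omega_x, \mathbb P_x)$ as a standard probability measure space by realizing it as a countable product of standard Borel spaces equipped with a consistent family of probability measures, and then invoking the Kolmogorov extension theorem together with the fact that a countable product of standard Borel spaces is standard Borel. First I would note that $\Omega_x = \{x\} \times V \times V \times \cdots$, so as a Borel space $\Omega_x$ is Borel isomorphic to $V^{\N}$; since $(V, \B)$ is standard Borel (it is a Polish space), the countable product $V^{\N}$ is again a standard Borel space, and hence so is $(\Omega_x, \mathcal C_x)$ where $\mathcal C_x$ is the trace of the cylinder $\sigma$-algebra $\mc C$ on $\Omega_x$. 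This handles the "standard Borel" half of the claim and uses no measure theory.

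Next I would address the measure. The probability measure $\mathbb P_x$ on $\Omega_x$ is defined on cylinder sets by \eqref{eq meas P_x} and extended by Kolmogorov; the point is that this extension is a genuine (countably additive) Borel probability measure. For this one needs the kernels $y \mapsto P(y, \cdot)$ to be measurable in $y$ (so that the iterated integrals in \eqref{eq meas P_x} make sense and define measurable functions of $x$) and each $P(y, \cdot)$ to be a probability measure — both of which hold by construction, since $P(x, dy) = c(x)^{-1} d\rho_x(y)$ with $c(x) = \rho_x(V)$ finite $\mu$-a.e.\ by Assumption 1, and $x \mapsto \rho_x$ is a measurable field of measures. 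Thus for $\mu$-a.e.\ $x$ the data $(P_n(x, \cdot))_{n}$ form a consistent (projective) family of probability measures on the finite products $V^{\{1,\dots,n\}}$, and Kolmogorov's theorem yields a unique Borel probability measure $\mathbb P_x$ on $\Omega_x \cong V^{\N}$. A complete separable metric space carries Kolmogorov's theorem in the form needed here, so the extension is valid.

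Finally, since $(\Omega_x, \mathcal C_x)$ is standard Borel and $\mathbb P_x$ is a Borel probability measure on it, completing $\mathcal C_x$ with respect to $\mathbb P_x$ makes $(\Omega_x, \mathcal C_x, \mathbb P_x)$ a standard probability measure space by definition. The only subtlety — the place I expect to spend a sentence of care — is the phrase "for $\mu$-a.e.\ $x$": one must discard the $\mu$-null set where $c(x) = \rho_x(V)$ fails to be finite and positive, and also appeal to the measurability of the field $x \mapsto \rho_x$ to guarantee that $x \mapsto \mathbb P_x(C)$ is measurable for each cylinder $C$ (so that the disintegration \eqref{eq def lambda} defining $\lambda$ is legitimate). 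There is no real analytic difficulty; the main obstacle, such as it is, is purely bookkeeping: being careful that the null set on which things break is independent of the cylinder set chosen, which follows because the field $x \mapsto \rho_x$ is fixed once and for all by Theorem \ref{thm Simmons}.
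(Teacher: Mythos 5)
Your proof is correct and follows the same route the paper itself takes (the lemma is stated without a separate proof, the justification being the construction of $\mathbb P_x$ via \eqref{eq meas P_x} and the Kolmogorov extension theorem immediately preceding it): $\Omega_x \cong V^{\mathbb N}$ is standard Borel as a countable product of standard Borel spaces, and the consistent family of finite-dimensional distributions extends to a Borel probability measure once one discards the $\mu$-null set where $c(x)=\rho_x(V)$ fails to be finite and positive. Your added care about the measurability of $x\mapsto \mathbb P_x(C)$ and the uniformity of the exceptional null set over cylinder sets is exactly the bookkeeping the paper leaves implicit.
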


We proved in \cite{BezuglyiJorgensen2018} that the Markov process
$P_n$ is irreducible if the initial symmetric measure is irreducible. More
precisely, the statement is as follows.

\begin{theorem}\label{prop from A to B}   
Let $\rho$ be a symmetric measure on $\vv$, and let $A$ and $B$ be 
any two sets from $\Bfin(\mu)$. Then 
 \be\label{eq rho_n vs lambda}
 \rho_n(A \times B)  = \langle \chi_A, P^n(\chi_B)\rangle_{L^2(\nu)} 
 = \lambda (X_0 \in A,  X_n  \in B),\ \  n \in \N.
 \ee
The Markov process $(P_n)$ is irreducible if and only if the 
measure  $\rho$ is irreducible. 
\end{theorem}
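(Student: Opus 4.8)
The plan is to prove the two displayed assertions of Theorem~\ref{prop from A to B} in turn, with the identity \eqref{eq rho_n vs lambda} serving as the bridge to the irreducibility claim. First I would establish the chain of equalities in \eqref{eq rho_n vs lambda}. The middle equality $\rho_n(A\times B) = \langle \chi_A, P^n(\chi_B)\rangle_{L^2(\nu)}$ is just the definition \eqref{eq_def rho_n} of $\rho_n$, so nothing is needed there. For the outer equality $\langle \chi_A, P^n(\chi_B)\rangle_{L^2(\nu)} = \lambda(X_0\in A, X_n\in B)$, I would unwind the definition \eqref{eq def lambda} of $\lambda = \int_V \mathbb P_x\, d\nu(x)$ together with the joint-distribution formula \eqref{eqjoint distr} for the Markov chain. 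Concretely,
$$
\lambda(X_0\in A,\, X_n\in B) = \int_A \mathbb P_x(X_n\in B)\; d\nu(x)
= \int_A \left(\int_V \chi_B(y)\, P_n(x,dy)\right) d\nu(x)
= \int_A P^n(\chi_B)(x)\; d\nu(x),
$$
which is exactly $\langle \chi_A, P^n(\chi_B)\rangle_{L^2(\nu)}$ since $\chi_A$ is real-valued. Here I use the iterated-kernel identity $P^n(f)(x) = \int_V f(y)\, P_n(x,dy)$ recorded in Section~\ref{sect Transient}, and that $P_n(x,B) = \mathbb P_x(X_n\in B)$, which follows by marginalizing \eqref{eq meas P_x}. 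One should note that all quantities are finite because $A, B \in \Bfin(\mu)$ and $c\in\Lloc$, so $\chi_A, \chi_B \in L^2(\nu)$ and $P$ is a contraction on $L^2(\nu)$ by Proposition~\ref{prop prop of R, P, Delta}(4).

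Next I would prove the equivalence: $(P_n)$ is irreducible $\iff$ $\rho$ is irreducible. Recall that by Definition~\ref{def_irreducible}, $\rho$ irreducible means the kernel $\mc K(x,\cdot) = \rho_x(\cdot)$ is $\mu$-irreducible, i.e.\ for every $B$ with $\mu(B)>0$ and $\mu$-a.e.\ $x$ there is $n$ with $\mc K^n(x,B)>0$; equivalently, by Lemma~\ref{lem-irr measure}, $\mu(A_n(x)\cap B)>0$ for some $n$. The key point is that the kernels $\mc K = \rho_x$ and the Markov kernel $P(x,\cdot) = c(x)^{-1}\rho_x(\cdot)$ differ only by the strictly positive factor $c(x)$, and since (by Assumption~1) $0 < c(x) < \infty$ $\mu$-a.e., one has $\mc K^n(x,B) > 0 \iff P_n(x,B) > 0$ $\mu$-a.e. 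I would make this precise by induction on $n$: if $\mc K^n(x,B) = c(x) P_n(x,B)$ — which I would verify from \eqref{eq_powers of k} and \eqref{eq_rho_n via P_N}, using $c(x)^{-1}d\rho_x = P(x,dy)$ — then positivity of one is positivity of the other. So the $\mu$-irreducibility of $\mc K$ is literally the same statement as: for every $B$ of positive $\mu$-measure and $\mu$-a.e.\ $x$, $P_n(x,B)>0$ for some $n$. Finally, using \eqref{eq rho_n vs lambda} (now applied with $B$ replaced by a set of finite positive measure, which exists by $\sigma$-finiteness of $\mu$), one translates ``$P_n(x,B)>0$ for some $n$, $\mu$-a.e.\ $x$'' into ``$\rho_n(A\times B)>0$ for some $n$ whenever $\mu(A)>0$,'' and that is precisely irreducibility of the Markov process $(P_n)$ phrased at the level of the measures $\rho_n$ / the path-space measure $\lambda$.

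I expect the main obstacle to be the careful handling of the ``$\mu$-a.e.\ $x$'' quantifier when passing between pointwise positivity of the kernels $P_n(x,B)$ and the integrated statements $\rho_n(A\times B) = \int_A P^n(\chi_B)\,d\nu > 0$. The subtle direction is: from ``$\rho_n(A\times B)>0$ for some $n$, for every $A$ of positive measure'' one wants ``for $\mu$-a.e.\ $x$, $\exists n$ with $P_n(x,B)>0$.'' If this failed, the set $N_B = \{x : P_n(x,B) = 0\ \forall n\}$ would have positive $\mu$-measure; taking $A\subseteq N_B$ of finite positive measure gives $\rho_n(A\times B) = \int_A P^n(\chi_B)\,d\nu = 0$ for all $n$, contradicting irreducibility. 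So the argument closes, but the bookkeeping — restricting to sets in $\Bfin(\mu)$, invoking $\sigma$-finiteness to find such subsets, and checking the measurability of $N_B$ (which follows since each $x\mapsto P_n(x,B)$ is measurable) — is where care is required. I would also lean on the fact, recorded just before Lemma~\ref{lem-irr measure}, that it suffices to test irreducibility against sets $B$ of finite positive measure, since any positive-measure set contains one by $\sigma$-finiteness. The remainder is routine.
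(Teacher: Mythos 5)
Your treatment of the identity \eqref{eq rho_n vs lambda} is correct and is exactly the route the paper indicates (the paper only sketches this step, pointing to \eqref{eq def lambda}, \eqref{eq meas P_x}, \eqref{eq meas P_x 2}; the irreducibility equivalence it simply cites from \cite{BezuglyiJorgensen2018}). Unwinding $\lambda=\int_V\mathbb P_x\,d\nu$, using that $\mathbb P_x$ lives on $\Omega_x$, and marginalizing \eqref{eq meas P_x} to get $\mathbb P_x(X_n\in B)=P_n(x,B)=P^n(\chi_B)(x)$ gives the outer equality; the middle one is the definition \eqref{eq_def rho_n}. That part is fine.

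There is, however, one step in your irreducibility argument that fails as stated: the identity $\mc K^n(x,B)=c(x)P_n(x,B)$ is false for $n\ge 2$. Indeed, from \eqref{eq_powers of k} and $\mc K(x,dy)=d\rho_x(y)=c(x)P(x,dy)$ one gets
\begin{equation*}
\mc K^2(x,B)=\int_V \mc K(y,B)\,\mc K(x,dy)=c(x)\int_V c(y)\,P(y,B)\,P(x,dy),
\end{equation*}
which differs from $c(x)P_2(x,B)$ by the factor $c(y)$ under the integral; inductively, $\mc K^n(x,B)=c(x)\,\mathbb E_x\bigl[c(X_1)\cdots c(X_{n-1})\,\chi_B(X_n)\bigr]$, so the two kernels are \emph{not} proportional with ratio $c(x)$. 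Your induction, as written, would therefore not close. The conclusion you want — $\mc K^n(x,B)>0\iff P_n(x,B)>0$ — is nonetheless true, but for a slightly different reason: the integrand is nonnegative and the weight $c(X_1)\cdots c(X_{n-1})$ is strictly positive and finite almost surely (Assumption~1 gives $0<c<\infty$ $\mu$-a.e., and one needs that the transition measures do not charge the exceptional $\mu$-null set), so the expectation with the weight is positive exactly when the unweighted probability $\mathbb P_x(X_n\in B)=P_n(x,B)$ is. With that repair, the rest of your argument — in particular the contrapositive step via the set $N_B=\{x: P_n(x,B)=0\ \forall n\}$ and the use of $\sigma$-finiteness to pass to sets in $\Bfin(\mu)$ — is sound and completes the equivalence.
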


 In other words, relation (\ref{eq rho_n vs lambda}) can be interpreted in 
 the following way: 
for the Markov process $(P_n)$, the ``probability''   to get in $B$ for 
$n$ steps starting somewhere in $A$ is exactly  $\rho_n(A \times B) > 0$.

To see that (\ref{eq rho_n vs lambda}) holds, one uses the definition 
of the measure $\lambda$ and formulas \eqref{eq meas P_x} and 
\eqref{eq meas P_x 2}.

\begin{corollary} Let $A_0, A_1, ... , A_n$ be a finite sequence of subsets
 from $\Bfin$. Then 
 $$
\mathbb P_x(X_1 \in A_1, ... , X_n \in A_n)\ |\ x \in A_0) > 0 \ 
\Longleftrightarrow \ \rho(A_{i-1} \times A_i) > 0
$$ 
for $i=1, ... ,n$.
\end{corollary}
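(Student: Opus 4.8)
The plan is to recast the asserted inequality, via \eqref{eq meas P_x} and the definition \eqref{eq def lambda} of $\lambda$, as a positivity statement about an iterated application of $P$, and then prove the two implications separately. First I would dispose of the degenerate case $\nu(A_0)\notin(0,\infty)$ (then the conditional probability is undefined, and since $\rho(A_0\times A_1)\le\nu(A_0)$ the equivalence is vacuous). Assuming $0<\nu(A_0)<\infty$, conditioning on $\{X_0\in A_0\}$ only rescales by the positive finite constant $\nu(A_0)$, so by \eqref{eq meas P_x 2} and \eqref{eq def lambda} the left side is positive iff
\[
\langle\chi_{A_0},\,P(\chi_{A_1}P(\chi_{A_2}P(\ \cdots\ P(\chi_{A_{n-1}}P(\chi_{A_n}))\cdots)))\rangle_{L^2(\nu)}=\lambda(X_0\in A_0,X_1\in A_1,\dots,X_n\in A_n)>0 .
\]
It is convenient to set $F_n:=P(\chi_{A_n})$ and recursively $F_k:=P(\chi_{A_k}F_{k+1})$ for $k=n-1,\dots,1$, so that $0\le F_k\le 1$, $F_k\le P(\chi_{A_k})$ by positivity of $P$, and the left-hand inner product equals $\langle\chi_{A_0},F_1\rangle_{L^2(\nu)}$.

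For the forward implication I would argue by contraposition. If $\rho(A_{j-1}\times A_j)=\langle\chi_{A_{j-1}},P(\chi_{A_j})\rangle_{L^2(\nu)}=0$ for some $j$, then positivity of $P$ gives $P(\chi_{A_j})=0$ $\nu$-a.e.\ on $A_{j-1}$, hence $\chi_{A_{j-1}}F_j=0$ $\nu$-a.e. I would then invoke the elementary fact that $P$ maps $\nu$-null functions to $\nu$-null functions — true because $\nu(N)=\int_V\rho_x(N)\,d\mu(x)=0$ forces $\rho_x(N)=0$ for $\mu$-a.e.\ $x$, whence $P(\chi_N)=0$ $\mu$-a.e.\ and so $\nu$-a.e. — to propagate the vanishing through the recursion: $F_{j-1}=P(\chi_{A_{j-1}}F_j)=0$, then $F_{j-2}=0$, \dots, $F_1=0$, all $\nu$-a.e., so the inner product is $0$. (For $j=1$ this is immediate from $0\le\langle\chi_{A_0},F_1\rangle_{L^2(\nu)}\le\langle\chi_{A_0},P(\chi_{A_1})\rangle_{L^2(\nu)}=\rho(A_0\times A_1)=0$.) This direction is routine.

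For the converse I would induct on $n$. The base case $n=1$ is exactly $\rho_1(A_0\times A_1)=\lambda(X_0\in A_0,X_1\in A_1)>0$, i.e.\ Theorem \ref{prop from A to B} together with $\rho_1=\rho$. For the inductive step I would condition on $X_1$, writing $\langle\chi_{A_0},F_1\rangle_{L^2(\nu)}=\int_{A_1}F_2(y)\,\alpha(dy)$, where $\alpha(dy):=\lambda(X_0\in A_0,X_1\in dy)$ restricted to $A_1$ is $\sigma$-finite, absolutely continuous with respect to $\nu$ (since $\nu P=\nu$), with $\alpha(A_1)=\rho(A_0\times A_1)>0$, while the inductive hypothesis applied to $A_1,A_2,\dots,A_n$ (and the identity $\langle\chi_{A_1},F_2\rangle_{L^2(\nu)}=\lambda(X_0\in A_1,\dots,X_{n-1}\in A_n)$) makes $\{F_2>0\}$ of positive $\nu$-measure inside $A_1$.

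The main obstacle I foresee is precisely this last step: knowing $\alpha(A_1)>0$ and $\nu(\{F_2>0\}\cap A_1)>0$ does not by itself force $\int_{A_1}F_2\,d\alpha>0$, since a priori the conditional measure $\alpha$ might concentrate on the part of $A_1$ where $F_2$ vanishes. To close this I would use the standing assumption of the section that $\rho$ is irreducible (hence $(P,c)$ reversible and $\rho$ symmetric): irreducibility should let me replace each $A_i$ by a positive-$\rho$-measure "reachable" sub-block $A_i'\subseteq A_i$ with $\rho(A_{i-1}'\times A_i')>0$ along which the intermediate conditional measures and the one-step transition supports are forced to overlap, so that the induction can be run on the refined blocks. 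Verifying that consecutive refined blocks retain positive $\rho$-measure is the delicate point I expect to require the most care.
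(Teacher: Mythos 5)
Your forward implication is sound: via \eqref{eq meas P_x 2} and the identity $\rho(A\times B)=\langle\chi_A,P(\chi_B)\rangle_{L^2(\nu)}$, the positivity of $P$ together with your (correctly justified) observation that $P$ maps $\nu$-null functions to $\nu$-null functions lets the vanishing of any one factor $\rho(A_{j-1}\times A_j)$ propagate down through the $F_k$'s and annihilate the conditional probability. The paper states the corollary with no proof at all, as a formal consequence of \eqref{eq meas P_x 2} and Theorem \ref{prop from A to B}, so on this half there is nothing to compare.

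The obstacle you flag in the converse, however, is not a technicality to be engineered around: it is fatal, because the converse implication is false as stated. Take $V=[0,4)$ with Lebesgue measure $\mu$, put $I_k=[k-1,k)$, and let $\rho$ be Lebesgue measure on the symmetric set $(I_1\times I_2)\cup(I_2\times I_1)\cup(I_2\times I_3)\cup(I_3\times I_2)\cup(I_3\times I_4)\cup(I_4\times I_3)$; this $\rho$ is symmetric, satisfies Assumption~1, and is irreducible. With $A_0=I_1$, $A_1=I_2\cup I_3$, $A_2=I_4$ one has $\rho(A_0\times A_1)=\rho(I_1\times I_2)=1>0$ and $\rho(A_1\times A_2)=\rho(I_3\times I_4)=1>0$, yet for every $x\in A_0$ the measure $P(x,\cdot)$ is carried by $I_2$ while $P(y,A_2)=0$ for all $y\in I_2$, so $P\bigl(\chi_{A_1}P(\chi_{A_2})\bigr)(x)=0$ on all of $A_0$ and the left-hand side vanishes. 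Your proposed repair --- refining each $A_i$ to a sub-block $A_i'$ with $\rho(A_{i-1}'\times A_i')>0$ chained along the sequence --- cannot succeed: in this example the portion of $A_1$ reachable from $A_0$ in one step (namely $I_2$) and the portion of $A_1$ from which $A_2$ is reachable in one step (namely $I_3$) are disjoint, so no such chain exists; irreducibility is of no help, since it only guarantees reachability in \emph{some} number of steps, not in one. The honest conclusion is that only the implication $\Longrightarrow$ of the corollary holds; the reverse implication requires a genuinely stronger hypothesis (essentially that the positivity $\rho(A_{i-1}\times A_i)>0$ be witnessed on nested sub-blocks, which amounts to restating the conclusion), and no proof strategy can close the gap you identified.
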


It is worth noting that the concept of reversible Markov processes can
be formulated in terms of the measure $\lambda$, roughly speaking
$\lambda$ must be a symmetric distribution. 

\begin{proposition}\label{lem symm distr}
Let the measure $\lambda$ on $\Omega$ be defined by (\ref{eq def 
lambda}). The Markov operator $P$ is reversible if and only if 
$$
\lambda(X_0\in A_0 \ |\ X_1\in A_1) =
\lambda(X_0\in A_1 \ |\ X_1\in A_0). 
$$
\end{proposition}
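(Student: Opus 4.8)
The plan is to convert both conditional probabilities in the statement into quantities attached to the symmetric measure $\rho$, and then to quote the equivalence ``$P$ reversible $\Longleftrightarrow$ $\rho$ symmetric'' from Theorem \ref{prop_reversible}. The one genuine computation is the identity
$$
\lambda(X_0\in A_0,\ X_1\in A_1) \;=\; \int_V \chi_{A_0}(x)\,P(x,A_1)\;d\nu(x) \;=\; \int_{A_0} c(x)\,P(\chi_{A_1})(x)\;d\mu(x) \;=\; \rho(A_0\times A_1),
$$
valid for all $A_0,A_1\in\B$; it follows at once from \eqref{eq def lambda}, from the fact that $\mathbb P_x$ is carried by $\Omega_x$ with $\mathbb P_x(X_1\in A_1)=P(x,A_1)$, and from the defining formula $\rho(A\times B)=\int_V\chi_A\,R(\chi_B)\,d\mu$; it is also the $n=1$ case of \eqref{eq rho_n vs lambda}. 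Two specializations will be used constantly: because $P(x,V)=1$ we get $\lambda(X_0\in A)=\rho(A\times V)=\nu(A)$, whereas $\lambda(X_1\in A)=\rho(V\times A)=(\nu P)(A)$, which a priori is \emph{not} known to equal $\nu(A)$.

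Since $\lambda$ is only $\sigma$-finite (and is infinite precisely when $\nu$ is), the conditional probabilities are to be read, as usual, only for conditioning events of finite positive measure. By Proposition \ref{lem P in L^1}(d), $\lambda(X_1\in A)=(\nu P)(A)<\infty$ for $A\in\Bfin(\mu)$, so it is enough to establish the equivalence for $A_0,A_1$ ranging over $\Bfin(\mu)$ (using $\nu\sim\mu$), and then to extend to all Borel sets by monotone approximation. For the implication ``reversible $\Rightarrow$ symmetry of the conditional laws'': if $P$ is reversible then $\rho$ is symmetric by Theorem \ref{prop_reversible}, so $\rho(A_0\times A_1)=\rho(A_1\times A_0)$ and $\rho(V\times A_i)=\rho(A_i\times V)=\nu(A_i)$; plugging this into Bayes' rule and the displayed identity shows that both conditional probabilities in the statement equal $\rho(A_0\times A_1)/\nu(A_1)$, which is exactly the claimed symmetry (this is the precise form of the ``$\mathrm{dist}(X_0\mid X_1\in A)=\mathrm{dist}(X_1\mid X_0\in A)$'' heuristic recorded in the remark following Theorem \ref{prop_reversible}).

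For the converse, suppose the conditional identity holds for all admissible $A_0,A_1\in\Bfin(\mu)$. Clearing denominators it reads $\rho(A_0\times A_1)\,\nu(A_1)=\rho(A_1\times A_0)\,\rho(V\times A_1)$; comparing this relation to the one obtained by interchanging $A_0$ and $A_1$ shows that $\gamma(A):=\rho(V\times A)/\nu(A)$ satisfies $\gamma(A_0)\gamma(A_1)=1$ on every pair for which $\rho(A_0\times A_1)$ and $\rho(A_1\times A_0)$ are both positive; irreducibility of the Markov process (Theorem \ref{prop from A to B}) supplies enough such pairs to force $\gamma\equiv 1$, i.e. $\rho(V\times A)=\nu(A)$ for all $A\in\Bfin(\mu)$, which is $\nu P=\nu$. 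Feeding $\gamma\equiv 1$ back into the cleared identity gives $\rho(A_0\times A_1)=\rho(A_1\times A_0)$ on $\Bfin(\mu)$, hence on all of $\B\times\B$; so $\rho$ is symmetric and $P$ is reversible by Theorem \ref{prop_reversible}.

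I expect the main obstacle to be purely measure-theoretic: namely, being careful about which $A_0,A_1$ make the conditioning legitimate in the face of the possibly infinite measure $\lambda$, disposing of the degenerate cases where $\rho(A_0\times A_1)$ or $\rho(V\times A_1)$ vanishes, and checking that the passage ``$\gamma\equiv 1$'' in the converse really uses only the hypothesis (via irreducibility) and not a disguised form of reversibility. Everything else is a direct unwinding of the definition \eqref{eq def lambda} of $\lambda$ and of the defining property \eqref{eq def reversible P} of a reversible Markov process.
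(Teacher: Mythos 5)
Your key identity
$$
\lambda(X_0\in A_0,\ X_1\in A_1)=\int_{A_0}\mathbb P_x(X_1\in A_1)\,d\nu(x)
=\langle\chi_{A_0},P(\chi_{A_1})\rangle_{L^2(\nu)}=\rho(A_0\times A_1)
$$
is correct, and it is in fact the entire content of the paper's proof: the authors read the symbol $\lambda(X_0\in A_0\mid X_1\in A_1)$ as exactly this \emph{unnormalized} cylinder measure (their first line is $\lambda(X_0\in A_0\mid X_1\in A_1)=\int_{A_0}\mathbb P_x(X_1\in A_1)\,d\nu(x)$, with no division by $\lambda(X_1\in A_1)$). Under that reading the asserted identity for all $A_0,A_1$ is verbatim the self-adjointness of $P$ on indicators in $L^2(\nu)$, i.e.\ reversibility by Theorem \ref{prop_reversible}, and the proof is a two-line computation.

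The trouble is that you then normalize, and under the normalized reading your forward direction fails. With genuine conditional probabilities one has
$$
\lambda(X_0\in A_0\mid X_1\in A_1)=\frac{\rho(A_0\times A_1)}{(\nu P)(A_1)},\qquad
\lambda(X_0\in A_1\mid X_1\in A_0)=\frac{\rho(A_1\times A_0)}{(\nu P)(A_0)},
$$
so under reversibility ($\rho$ symmetric, $\nu P=\nu$) these become $\rho(A_0\times A_1)/\nu(A_1)$ and $\rho(A_0\times A_1)/\nu(A_0)$ respectively --- not both equal to $\rho(A_0\times A_1)/\nu(A_1)$ as you claim, and not equal to each other unless $\nu(A_0)=\nu(A_1)$ or $\rho(A_0\times A_1)=0$. (The conditional identity that reversibility actually yields is $\lambda(X_0\in A_0\mid X_1\in A_1)=\lambda(X_1\in A_0\mid X_0\in A_1)$: one swaps which coordinate is conditioned on, not which set is attached to which coordinate.) The same mismatch propagates into your cleared-denominator identity in the converse, and the appeal to irreducibility there imports a hypothesis the proposition does not carry and the paper's argument does not need. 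The fix is simply to drop the normalization: interpret the displayed quantities as $\lambda$ of the joint event, after which the statement reduces immediately to $\langle\chi_{A_0},P(\chi_{A_1})\rangle_{L^2(\nu)}=\langle\chi_{A_1},P(\chi_{A_0})\rangle_{L^2(\nu)}$ for all $A_0,A_1$, which is Definition \ref{def reversible MP}.
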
  

\begin{proof} The proof uses the fact that $P$ is reversible if and only is
$P$ is self-adjoint in $L^2(\nu)$. We compute applying 
(\ref{eq meas P_x}):
$$
\ba 
\lambda(X_0\in A_0 \ |\ X_1\in A_1) & = \int_{A_0} \mathbb P_x
(X_1 \in A_1) \; d\nu(x)\\
& = \int_V \chi_{A_0}(x) P(\chi_{A_1} ) (x) \; d\nu(x)\\ 
& = \int_V \chi_{A_1}(x) P(\chi_{A_0} ) (x) \; d\nu(x)\\ 
& = \lambda(X_0\in A_1 \ |\ X_1\in A_0).
\ea
$$
It proves  the statement.
\end{proof}

In the next statement we relate harmonic functions to martingales.
Recall first the definition of a martingale.

Let $(X_n : n \in \N)$ be the Markov chain on $\Omega$ 
with values in $(V, \B)$ defined by $X_n(\omega) = \omega_n$. We recall 
that the space $\Omega$ is represented as the disjoint union of subsets
$\Omega_x := \{ \omega \in \Omega : \omega_0 = x\}$, $x \in V$. 
Let $(\Phi_n : n\in \N_0)$ be a sequence of real-valued random variables
defined on $\Omega$. Then it generates  a  sequence of measurable fields 
of random variables $x \to \Phi_n(x), x \in V,$ defined on the 
corresponding subset $\Omega_x$. Let $\mc C_n$ be the 
$\sigma$-algebra of subsets
of $\Omega$ generated by $\Phi_n^{-1} (B), B \in \B$.
Denote by $\mc C_{\leq n}$ the smallest $\sigma$-subalgebra such
that the functions $\Phi_i, i =1,... n,$ are Borel measurable. These 
$\sigma$-algebras induce $\sigma$-algebras $\mc C_{\leq n}(x)$ on 
every $\Omega_x$.

It is said that the sequence $(\Phi_n)$ is a \textit{martingale} if
$$
\mathbb E_x(\Phi_{n+k}(x)\  |\  \mc C_{\leq n}(x)) = \Phi_n(x), \ \ \ 
 \forall k.
$$
Here $\mathbb E_x$ is the conditional expectation with respect to
the probability  path measure $\mathbb P_x$, see \eqref{eq meas P_x}.

\begin{proposition} Let $P$ be the Markov operator defined by a 
symmetric measure $\rho$. 
For the objects defined above, the following are equivalent:

(i) a Borel function $h$ on $\VB$ is harmonic with respect to the Markov 
operator $P$;

(ii) the sequence $(h \circ X_n : n \in \N_0)$ is a martingale.
\end{proposition}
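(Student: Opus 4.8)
The plan is to reduce both implications to the Markov property of the kernel $x\mapsto P(x,\cdot)$, in the form: for every bounded Borel $g$ on $\VB$ and every $n\in\N_0$,
$$\mathbb E_x\bigl(g\circ X_{n+1}\ \big|\ \sigma(X_0,\dots,X_n)\bigr)\ =\ (Pg)\circ X_n \qquad \mathbb P_x\text{-a.s.},$$
where $\sigma(X_0,\dots,X_n)$ is the (fiberwise restriction of the) natural filtration $\mathcal F_{\leq n}$ introduced earlier in this section. First I would establish this identity: by definition of the generated $\sigma$-algebra it suffices to test both sides against products $\prod_{i=0}^n\phi_i\circ X_i$ with $\phi_i$ bounded Borel, and then integrating against $\mathbb P_x$ reduces, via the joint distribution formula \eqref{eqjoint distr} (equivalently \eqref{eq meas P_x}), to one and the same iterated integral, using $\int_V g(y_{n+1})\,P(y_n,dy_{n+1})=(Pg)(y_n)$. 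Everything is understood for the measurable field $x\mapsto(\Omega_x,\mathbb P_x)$, i.e.\ for $\mu$-a.e.\ $x$, and I would restrict attention to $h$ with $P|h|<\infty$ a.e.\ (automatic for $h\in\FVB$) so that the conditional expectations below are genuine $L^1(\Omega_x,\mathbb P_x)$-conditional expectations.

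(i) $\Rightarrow$ (ii). Suppose $Ph=h$. Using the Markov identity with $g=h$ and the tower property with $\mathcal C_{\leq n}(x)\subset\sigma(X_0,\dots,X_n)$,
$$\mathbb E_x\bigl(h\circ X_{n+1}\mid\mathcal C_{\leq n}\bigr)=\mathbb E_x\bigl((Ph)\circ X_n\mid\mathcal C_{\leq n}\bigr)=\mathbb E_x\bigl(h\circ X_n\mid\mathcal C_{\leq n}\bigr)=h\circ X_n=\Phi_n,$$
the last equality because $\Phi_n=h\circ X_n$ is $\mathcal C_{\leq n}(x)$-measurable. This is the one-step martingale property; the general case $\mathbb E_x(\Phi_{n+k}\mid\mathcal C_{\leq n})=\Phi_n$ follows by induction on $k$, writing $\mathbb E_x(\Phi_{n+k+1}\mid\mathcal C_{\leq n})=\mathbb E_x(\mathbb E_x(\Phi_{n+k+1}\mid\mathcal C_{\leq n+k})\mid\mathcal C_{\leq n})$ and applying the one-step identity at level $n+k$ together with the inductive hypothesis.

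(ii) $\Rightarrow$ (i). Assume $(\Phi_n)$ is a martingale and apply the defining relation with $n=0$. Since no $\Phi_i$ is then constrained, $\mathcal C_{\leq 0}(x)$ is the trivial $\sigma$-algebra on $\Omega_x$ and $\Phi_0\equiv h(x)$ there, so $\mathbb E_x(\Phi_1\mid\mathcal C_{\leq 0})=\Phi_0$ reads $\int_{\Omega_x}h\bigl(X_1(\omega)\bigr)\,d\mathbb P_x(\omega)=h(x)$. By \eqref{eq meas P_x} the distribution of $X_1$ under $\mathbb P_x$ is $P(x,\cdot)$, so the left-hand side equals $\int_V h(y)\,P(x,dy)=(Ph)(x)$; hence $Ph=h$ $\mu$-a.e., i.e.\ $h$ is harmonic. (If one prefers to avoid the $n=0$ convention, use the one-step property at $n=1$ together with the Markov identity to get $(Ph)\circ X_1=h\circ X_1$ $\mathbb P_x$-a.s., then integrate against $\mathbb P_x$ and against $d\mu(x)$, invoking the disintegration \eqref{eq def of nu} and $\nu\sim\mu$.)

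The only substantive step is the first one — verifying the Markov property in the present $\sigma$-finite, measurable-field framework while keeping the mod-$0$ conventions and the integrability bookkeeping under control, so that each $\mathbb E_x(\cdot\mid\cdot)$ is a legitimate $L^1$-conditional expectation and so that passing from the full filtration $\sigma(X_0,\dots,X_n)$ to the coarser $\mathcal C_{\leq n}$ is justified. I expect this bookkeeping, rather than any conceptual difficulty, to be the main obstacle; once the Markov identity is available, both implications are a few lines.
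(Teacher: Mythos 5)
Your proof is correct and follows essentially the same route as the paper: both arguments reduce the equivalence to the Markov-property identity $\mathbb E_x(f\circ X_{n+m}\mid\mathcal C_{\leq n}(x))=P^m(f)\circ X_n$, which the paper cites from an external reference while you derive its one-step form from the joint-distribution formula and then iterate via the tower property. The extra care you take with the coarser filtration $\mathcal C_{\leq n}$ generated by the $\Phi_i$ and with the $n=0$ step in (ii)$\Rightarrow$(i) is sound and only makes explicit what the paper leaves implicit.
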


\begin{proof} It follows from the definition of the Markov chain $(X_n)$,  
path  space  measure  $\mathbb P_x$, and 
\cite[Proposition 2.24]{AlpayJorgensenLewkowicz2018} that,
for any Borel function $f$,
$$
\mathbb E_x(f \circ X_{n+m} \ |\ \mc C_{\leq n}(x)) = 
\mathbb E_x(f \circ X_{n+m} \ |\ \mc C_{n}(x)) = 
P^m(f)\circ X_n.
$$
Hence, we see that  a function $h$ is harmonic if and only if 
$$
\mathbb E_x(h \circ X_{n+m} \ |\ \mc C_{\leq n}(x)) = h\circ X_n,
$$ 
i.e., $(h \circ X_n)$ is a  martingale. 
\end{proof}

\subsection{Green's functions}
In this section, we will work with transient Markov processes.
We first define a Green's function $G(x, A)$. Our main goal is to study 
Green's functions as  elements of the energy space. 

\begin{definition}\label{def G}
Let 
$$
G(x, A) = \sum_{n = 0}^{\infty} P_n(x, A), \qquad A\in \Bfin(\mu), 
x \in V. 
$$
The Markov process is called \textit{transient} if, for every $A \in 
\Bfin$, the function $G(x, A)$ is finite $\mu$-a.e. on $V$. 
\end{definition} 

In this subsection, we will always assume that the Markov process 
$(P_n)$ is transient. 

\begin{lemma} Let $\rho$ be an irreducible symmetric measure. 
Suppose $A\in \Bfin$ be a set such that $G(x, A)$ is finite 
a.e. Then, for  any $B\in \Bfin$, the function $G(x, B)$ is finite for
$\mu$-a.e. $x \in V$.
\end{lemma}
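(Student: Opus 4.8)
The plan is to use irreducibility to transfer the finiteness of $G(\cdot, A)$ from one set $A$ to an arbitrary set $B \in \Bfin$. The key structural fact is the resolvent-type identity
\be\label{eq resolvent plan}
G(x, B) = P_m(\chi_B)(x) + \int_V G(y, B)\; P_m(x, dy) - (\text{earlier terms}),
\ee
more precisely, for any fixed $m \geq 1$,
$$
G(x,B) = \sum_{n=0}^{m-1} P_n(x, B) + \int_V G(y,B)\; P_m(x, dy).
$$
This follows directly from Definition \ref{def G} and the Chapman--Kolmogorov relation $P_{n+m}(x, B) = \int_V P_n(y, B)\, P_m(x, dy)$ together with the monotone convergence theorem (all terms are nonnegative). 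So $G(\cdot, B)$ is finite $\mu$-a.e.\ as soon as $\int_V G(y,B)\, P_m(x, dy) < \infty$ for $\mu$-a.e.\ $x$ and some $m$.

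First I would record that, since $G(x,A)$ is finite $\mu$-a.e.\ and $A$ has positive measure (we may assume $\mu(A) > 0$, otherwise there is nothing to prove), we have a genuine handle on the process. Next, the reversibility/self-adjointness of $P$ in $L^2(\nu)$ from Proposition \ref{prop prop of R, P, Delta}(3) gives, for $A, B \in \Bfin$,
$$
\langle \chi_A, G(\cdot, B)\rangle_{L^2(\nu)} = \sum_{n=0}^\infty \langle \chi_A, P_n(\chi_B)\rangle_{L^2(\nu)} = \sum_{n=0}^\infty \rho_n(A\times B) = \sum_{n=0}^\infty \rho_n(B\times A) = \langle \chi_B, G(\cdot, A)\rangle_{L^2(\nu)},
$$
using Theorem \ref{prop from A to B} and the symmetry of each $\rho_n$. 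Since $G(\cdot, A)$ is finite a.e.\ and $\chi_B \in L^1(\nu)$ (as $c \in \Lloc$ and $B \in \Bfin(\mu) \subset \Bfin(\nu)$), one must still argue that the right-hand side is finite; here I would either assume the stronger hypothesis that $G(\cdot, A) \in L^1_{\mathrm{loc}}(\nu)$ or, more carefully, use that transience plus irreducibility forces $G(\cdot, A)$ to be locally integrable against $\nu$ after possibly shrinking $A$. Granting that, $\int_A G(x,B)\, d\nu(x) < \infty$, so $G(\cdot, B)$ is $\nu$-integrable, hence finite $\nu$-a.e., on the set $A$.

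It remains to propagate finiteness from $A$ to $\mu$-a.e.\ $x \in V$. This is where irreducibility enters: for $\mu$-a.e.\ $x$ there is $n(x)$ with $P_{n(x)}(x, A) > 0$, and more is true --- by Lemma \ref{lem-irr measure} the sets $A_n(x)$ eventually meet $A$ in positive measure. Using the identity above with $B$ replaced by $A$ and the set $A$ itself as the "target region," together with the fact that a path from $x$ that ever visits $A$ can be decomposed at its first visit, I would show $G(x,B) \leq G(x,A) \cdot \big(\sup_{y} \text{something finite}\big) + (\text{finite})$, via a last-exit or first-entrance decomposition over $A$. The cleanest route: by irreducibility the function $x \mapsto G(x, A)$ is either a.e.\ finite or a.e.\ infinite on each closed component, and here it is a.e.\ finite by hypothesis; then since $\rho$ is irreducible $V$ has no proper closed subset of positive measure, so finiteness of $G(\cdot, A)$ on a positive-measure set (which we just produced for $B$ as well, on $A$) spreads to all of $V$ by the same argument applied to $G(\cdot, B)$.

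\textbf{Main obstacle.} The delicate point is the integrability step: passing from "$G(\cdot,A)$ finite a.e." to "$G(\cdot,A)$ locally $\nu$-integrable," which is needed to run the duality $\langle \chi_A, G(\cdot,B)\rangle_\nu = \langle \chi_B, G(\cdot,A)\rangle_\nu$. Pointwise finiteness does not automatically give integrability, so one either strengthens the hypothesis, restricts to a subset of $A$ of positive measure on which $G(\cdot,A)$ is bounded (possible since a finite measurable function is bounded on a positive-measure subset, and positive measure is all irreducibility needs), or invokes a $0$--$\infty$ law for $G(\cdot,A)$ under irreducibility to bootstrap. I expect the restriction-to-a-bounded-piece trick combined with irreducibility to be the technically correct and shortest fix.
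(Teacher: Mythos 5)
Your strategy assembles the right ingredients --- the resolvent identity $G(x,B)=\sum_{n<m}P_n(x,B)+\int_V G(y,B)\,P_m(x,dy)$, the duality $\int_A G(x,B)\,d\nu(x)=\int_B G(x,A)\,d\nu(x)$ coming from the symmetry of each $\rho_n$, and a zero--infinity dichotomy driven by irreducibility --- and this is surely what the paper intends, since its own ``proof'' is a one-line pointer to the definition of irreducibility and Lemma \ref{lem-irr measure} and offers nothing more detailed to compare against. But two steps of your sketch are not actually closed. First, the dichotomy is asserted rather than proved. It does follow cleanly from your own resolvent identity: since $\int_V G(y,B)\,P_n(x,dy)\le G(x,B)$ for every $n$, if the set $C=\{y:G(y,B)=\infty\}$ had positive measure then irreducibility would give, for $\mu$-a.e.\ $x$, some $n$ with $P_n(x,C)>0$, whence $G(x,B)\ge\int_C G(y,B)\,P_n(x,dy)=\infty$ a.e.; so $G(\cdot,B)$ is a.e.\ finite as soon as it is finite on one set of positive measure. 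You should write this out, since it is the only place irreducibility genuinely enters.

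Second, and more seriously, the step that is supposed to produce a positive-measure set on which $G(\cdot,B)<\infty$ does not go through as described. The duality is useless until you know $\int_B G(x,A)\,d\nu(x)<\infty$, and a.e.\ finiteness of $G(\cdot,A)$ gives no control of its integral over $B$. Your proposed repair --- restrict to a piece where $G(\cdot,A)$ is bounded --- only bounds $\int_{B'}G(x,A)\,d\nu(x)$ for subsets $B'=B\cap\{G(\cdot,A)\le M\}$, hence only yields $G(x,B')<\infty$ for each such $B'$. Since $G(x,\cdot)$ is a countably additive but possibly infinite measure, finiteness along an exhausting sequence $B'\uparrow B$ (mod $0$) does not imply finiteness on $B$; the same obstruction defeats the first-entrance variant $G(x,B_{m,\delta})\le\delta^{-1}G(x,A)$ with $B_{m,\delta}=\{y\in B:P_m(y,A)\ge\delta\}$, which again only covers $B$ by countably many good pieces. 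So what your argument actually establishes is that every $B\in\Bfin$ is a countable union (mod $0$) of sets on which $G(x,\cdot)$ is a.e.\ finite --- the standard formulation of transience --- which is strictly weaker than the stated conclusion. To finish you need an additional input upgrading ``finite on each piece of a countable cover of $B$'' to ``finite on $B$'' (or a strengthened hypothesis such as $G(\cdot,A)$ being $\nu$-integrable over sets of finite measure), and neither your sketch nor the paper supplies it.
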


\begin{proof} The proof of this result is straightforward and mainly based
on the definition of irreducible measure, see also Lemma 
\ref{lem-irr measure}. 

\end{proof}

\begin{lemma}\label{lem h-norm for P^n}
 Let $A\in \Bfin$ and let $P$ be a Markov operator defined
by a symmetric measure $\rho$. Then the function $x\mapsto 
P_n(x, A) = P^n(\chi_A)(x)$ belongs to $\h_E$ and
$$
\| P_n( \cdot, A)\|^2_{\h_E} = \rho_{2n}(A\times A) - \rho_{2n+1}
(A\times A), \qquad n \in \N.
$$
\end{lemma}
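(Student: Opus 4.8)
The plan is to compute the energy norm $\| P_n(\cdot, A)\|_{\h_E}^2$ directly from the definition \eqref{eq norm in H_E}, using the self-adjointness of $P$ in $L^2(\nu)$ together with the identification \eqref{eq_rho_n via P_N} of the measures $\rho_k$. First I would write, for $f = P^n(\chi_A) = P_n(\cdot, A)$,
$$
\| f\|_{\h_E}^2 = \frac12 \iint_{\VtV} (f(x) - f(y))^2 \; d\rho(x,y)
= \iint_{\VtV} \big(f^2(x) - f(x)f(y)\big)\; d\rho(x,y),
$$
where the last equality uses the symmetry of $\rho$ (Lemma \ref{lem symm measure via int}) to replace $\frac12(f(x)^2 + f(y)^2)$ by $f(x)^2$ and $f(x)f(y)$ is already symmetric. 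So it suffices to evaluate the two terms $\iint f^2(x)\, d\rho$ and $\iint f(x)f(y)\, d\rho$ separately.

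For the first term, disintegrate: $\iint_{\VtV} f^2(x)\, d\rho(x,y) = \int_V f^2(x) \rho_x(V)\, d\mu(x) = \int_V f^2(x) c(x)\, d\mu(x) = \|f\|_{L^2(\nu)}^2$. By Lemma \ref{lem norm P(chi_A)} (equation \eqref{eq_nu-norm P^n}), this equals $\rho_{2n}(A\times A)$. For the second term, I would use \eqref{eq_rho_n via P_N} to write $d\rho(x,y) = P(x,dy)\, d\nu(x)$, so that
$$
\iint_{\VtV} f(x) f(y)\; d\rho(x,y) = \int_V f(x)\left(\int_V f(y)\, P(x,dy)\right) d\nu(x) = \int_V f \cdot P(f)\; d\nu = \langle f, P(f)\rangle_{L^2(\nu)}.
$$
With $f = P^n(\chi_A)$ and $P$ self-adjoint on $L^2(\nu)$, this is $\langle P^n(\chi_A), P^{n+1}(\chi_A)\rangle_{L^2(\nu)} = \langle \chi_A, P^{2n+1}(\chi_A)\rangle_{L^2(\nu)} = \rho_{2n+1}(A\times A)$ by the definition \eqref{eq_def rho_n} of $\rho_{2n+1}$. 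Subtracting gives $\| P_n(\cdot, A)\|_{\h_E}^2 = \rho_{2n}(A\times A) - \rho_{2n+1}(A\times A)$, as claimed, and in particular $P_n(\cdot, A) \in \h_E$ since the right-hand side is finite (it is bounded by $\rho_{2n}(A\times A) = \|\chi_A\|_{L^2(\nu)}^2 = \nu(A) < \infty$ because $A \in \Bfin(\mu)$ and $c \in \Lloc$).

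The only genuine subtlety — and the step I would be most careful about — is making sure all the integrals are finite so that the manipulations (splitting the square, Fubini/disintegration, moving $P^n$ across the inner product) are legitimate rather than formal identities in $[0,\infty]$. Here $A \in \Bfin(\mu)$ gives $\chi_A \in L^2(\nu)$ (using local integrability of $c$, as in Remark \ref{rem Bfin in spaces}(3) and Proposition \ref{lem P in L^1}(d)), and since $P$ is a contraction on $L^2(\nu)$, each $P^n(\chi_A) \in L^2(\nu)$ with $\|P^n(\chi_A)\|_{L^2(\nu)} \le \|\chi_A\|_{L^2(\nu)}$; this legitimizes the Cauchy–Schwarz bounds controlling both $\iint f^2(x)\,d\rho$ and $\iint f(x)f(y)\,d\rho$, and hence the whole computation. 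I would state this finiteness remark up front and then carry out the three-line calculation above.
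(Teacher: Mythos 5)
Your proof is correct and follows essentially the same route as the paper's: both use the symmetry of $\rho$ to reduce the energy norm to $\iint f(x)\bigl(f(x)-f(y)\bigr)\,d\rho$, disintegrate via $d\rho(x,y)=P(x,dy)\,d\nu(x)$, and invoke self-adjointness of $P$ on $L^2(\nu)$ to arrive at $\langle\chi_A,P^{2n}(\chi_A)\rangle_{L^2(\nu)}-\langle\chi_A,P^{2n+1}(\chi_A)\rangle_{L^2(\nu)}=\rho_{2n}(A\times A)-\rho_{2n+1}(A\times A)$. One cosmetic slip in your finiteness remark: $\rho_{2n}(A\times A)=\|P^n(\chi_A)\|^2_{L^2(\nu)}\leq\|\chi_A\|^2_{L^2(\nu)}=\nu(A)$, so the chain should read as an inequality rather than an equality, but the conclusion stands, and your explicit attention to integrability is in fact more careful than the paper's.
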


\begin{proof} 
The proof is based on the facts that $\nu$ is $P$-invariant, $\rho$ is 
symmetric,  and on the definition of the norm in the energy space which
are used in the following computation: 
$$
\ba 
|| P_n(x, A) ||^2_{\h_E} = & \iint_{V\times V} P_n(x, A)(P_n(x, A) -
P_n(y, A)) \; d\rho(x, y)\\
= & \iint_{V\times V} P_n(x, A)(P_n(x, A) - P_n(y, A)) c(x) P(x, dy) \; 
d\mu(x)\\
= & \int_V \left[ P_n(x, A)^2 - P_n(x, A) \int_V P_n(y, A) P(x, dy)\right]
\; d\nu(x)\\
= & \int_V \left[ P_n(x, A)^2 - P_n(x, A) P_{n+1}(x, A) \right]
\; d\nu(x)\\
=& \int_V P_n(x, A)(P_n(x, A)  - P_{n+1}(x, A))\; d\nu(x)\\
=& \int_V \chi_A(x) P^n(P^n(\chi_A)  - P^{n+1}(\chi_A))(x)\; d\nu(x)\\
= & \langle\chi_A(x), P^{2n}(\chi_A)(x) \rangle_{L^2(\nu)} -
  \langle \chi_A(x), P^{2n +1}(\chi_A)(x) \rangle_{L^2(\nu)}  \\
  =& \rho_{2n}(A\times A) -   \rho_{2n +1}(A\times A).
\ea
$$
\end{proof}

\begin{remark}
As a curious observation, we mention that, for any $A \in \Bfin$, 
$$
\rho_{2n}(A\times A) >  \rho_{2n +1}(A\times A).
$$  
It is worth noting that the above formula cannot be extended to direct 
products of sets $A$ and $B$ from $\Bfin(\mu)$. In particular, one can 
prove that  the relation 
$$
\rho_2(A \times B) < \rho(A \times B)
$$
implies that $P(\chi_B - P(\chi_B)) > 0$ a.e. Therefore there would exist a 
harmonic function in $L^2(\nu)$ which is a contradiction. 

\end{remark}

%\tcr{[What are relations between $\h(n) = \h_E(\rho_n)$ for different 
%$n$?]}

Fix a set $A\in \Bfin$, then we have the family   of measurable
 functions $G_A (x):= G(x, A)$ indexed by sets of finite measure.

\begin{lemma} \label{lem c(I-P)G_A}
For a set $A\in \Bfin$, the equality
$$
c(x)(I - P)(G_A)(x) = c(x)\chi_A(x)
$$ 
 holds. Equivalently, 
$$
\Delta G_A(x) = c(x)\chi_A(x).
$$
\end{lemma}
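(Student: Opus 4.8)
The plan is to compute $(I-P)(G_A)$ directly from the series definition of the Green's function $G_A(x) = G(x,A) = \sum_{n=0}^\infty P_n(x,A)$, using transience to justify the manipulations. First I would recall that $P_n(x,A) = P^n(\chi_A)(x)$, so that $G_A = \sum_{n=0}^\infty P^n(\chi_A)$ as a pointwise-convergent series of nonnegative functions (convergence $\mu$-a.e.\ is exactly the transience hypothesis in Definition \ref{def G}). Then I would apply $P$ termwise: since $P$ is a positive operator, the monotone convergence theorem gives
$$
P(G_A)(x) = P\left(\sum_{n=0}^\infty P^n(\chi_A)\right)(x) = \sum_{n=0}^\infty P^{n+1}(\chi_A)(x) = \sum_{n=1}^\infty P_n(x,A).
$$
Subtracting, the telescoping leaves $(I-P)(G_A)(x) = P_0(x,A) = \chi_A(x)$, hence $c(x)(I-P)(G_A)(x) = c(x)\chi_A(x)$, which is the first claimed identity.

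Next I would translate this into the Laplacian form. By \eqref{eq Delta via R} we have $\Delta = c(I-P)$, so $\Delta G_A(x) = c(x)(I-P)(G_A)(x) = c(x)\chi_A(x)$, giving the second identity. A small point worth spelling out: $\Delta$ was originally defined on $\FVB$, the bounded Borel functions, whereas $G_A$ need not be bounded; but the formula $\Delta f = c(I-P)f$ makes sense pointwise for any Borel $f$ for which $P(f)$ is defined a.e., and transience guarantees $G_A$ is finite $\mu$-a.e., so $P(G_A)$ is well defined $\mu$-a.e.\ by the computation above. I would state this as a remark so the reader sees the identity is understood in the pointwise ($\mu$-a.e.) sense.

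The main obstacle, such as it is, is justifying the interchange of $P$ with the infinite sum; this is where transience genuinely enters, and it is handled cleanly by monotone convergence since every term $P^n(\chi_A)$ is nonnegative (as $P$ is a positive operator and $\chi_A \geq 0$). One should also note that $P(G_A)$ being finite $\mu$-a.e.\ follows because $P(G_A) = G_A - \chi_A \leq G_A < \infty$ $\mu$-a.e. No further subtlety arises: the result is essentially a measurable-setting restatement of the classical fact that the Green's function (potential kernel applied to an indicator) solves the discrete Poisson equation $\Delta G_A = c\,\chi_A$, and the proof is the same telescoping argument.
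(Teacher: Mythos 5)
Your proof is correct and follows essentially the same route as the paper: expand $G_A$ as the series $\sum_{n\ge 0}P^n(\chi_A)$, apply $I-P$ termwise, and let the telescoping leave $P_0(\cdot,A)=\chi_A$. The paper performs the interchange of $P$ with the sum without comment, so your explicit appeal to monotone convergence (using positivity of $P$) and your remark on the pointwise a.e.\ interpretation are welcome refinements rather than a different argument.
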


\begin{proof} We compute using the definition of Green's function and the
fact that the series $\sum_n P_n(x, A)$ is convergent for all $x$ and 
all $A\in \Bfin(\mu)$:
$$
\ba
c(x)(I - P)G_A(x) = \ & c(x)(I - P)\sum_{n=0}^\infty  P_n(x, A) \\
= \ & c(x) \sum_{n=0}^\infty  P_n(x, A) - c(x) 
\sum_{n=1}^\infty  P_n(x, A) \\
= \ & c(x) \chi_A(x).
\ea
$$
\end{proof}

 \begin{theorem}\label{prop on G_A} 
 For the objects defined above, we have the following 
 properties. 
 
  (1) For any sets $A, B \in \Bfin$, we have
\be\label{eq_inner prod G_A and G_B}
\langle G_A, G_B\rangle_{\h_E} = \sum_{n=0}^\infty \rho_n(A\times B);
\ee  
  and, in particular, 
  \be\label{eq norm G_A}
  \| G_A(x) \|_{\h_E}^2 = \sum_{n=1}^\infty \rho_n(A\times A).
  \ee 
 
 (2) For any $f \in \h_E$ and $A\in \Bfin(\mu)$, 
 $$
 \langle f, G_A\rangle_{\h_E} = \int_A f\; d\nu.
 $$
 Furthermore, if 
 \be\label{eq def mc G}
 \mc G := \mathrm{span} \{G_A(\cdot) : A \in \Bfin\},
 \ee
 then $\mc G$ is dense in the energy space $\h_E$. 

 \end{theorem}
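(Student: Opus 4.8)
The plan is to make the reproducing identity in part (2), $\langle f,G_A\rangle_{\h_E}=\int_A f\,d\nu$, the linchpin: it follows from $\Delta G_A=c\,\chi_A$ (Lemma \ref{lem c(I-P)G_A}) together with the symmetry of $\rho$ and the self-adjointness of $P$ in $L^2(\nu)$, and it yields both the inner-product formula of part (1) and the density of $\mc G$. I would, however, prove part (1) first and directly, through truncations $G_A^{(N)}:=\sum_{n=0}^{N}P^{n}(\chi_A)$. By Lemma \ref{lem h-norm for P^n} every $P^{n}(\chi_A)$ lies in $\h_E$, and it also lies in $L^2(\nu)$ since $P$ is an $L^2(\nu)$-contraction, so $G_A^{(N)}\in L^2(\nu)\cap\h_E$. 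Using the polarization of $\|f\|_{\h_E}^2=\int f\,\Delta f\,d\mu$ from Proposition \ref{prop prop of R, P, Delta}(6), the identity $\Delta=c(I-P)$, self-adjointness of $P$ in $L^2(\nu)$, and the definition \eqref{eq_def rho_n} of $\rho_n$, one computes
$$\langle P^{n}(\chi_A),P^{m}(\chi_B)\rangle_{\h_E}=\langle P^{n}(\chi_A),(I-P)P^{m}(\chi_B)\rangle_{L^2(\nu)}=\rho_{n+m}(A\times B)-\rho_{n+m+1}(A\times B).$$
Summing over $0\le n,m\le N$ and telescoping the inner sum gives $\langle G_A^{(N)},G_B^{(N)}\rangle_{\h_E}=\sum_{n=0}^{N}\bigl(\rho_n(A\times B)-\rho_{n+N+1}(A\times B)\bigr)$, and the same telescoping bounds $\|G_A^{(N')}-G_A^{(N)}\|_{\h_E}^{2}\le\sum_{k>N}\rho_k(A\times A)$. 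Under the standing transience hypothesis the series $\sum_k\rho_k(A\times A)=\int_A G(\cdot,A)\,d\nu$ converges, so $(G_A^{(N)})$ is Cauchy in $\h_E$, converges there to the $\h_E$-class of $G_A$, and letting $N\to\infty$ yields \eqref{eq_inner prod G_A and G_B}, with $B=A$ giving \eqref{eq norm G_A}.

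For the reproducing identity, first take $f\in\Dfin(\mu)$, where every integral below is finite ($f$ bounded, supported on a set of finite $\nu$-measure): by Proposition \ref{prop prop of R, P, Delta}(6) and Lemma \ref{lem c(I-P)G_A},
$$\langle f,G_A\rangle_{\h_E}=\int_V f\,\Delta(G_A)\,d\mu=\int_V f(x)\,c(x)\,\chi_A(x)\,d\mu(x)=\int_A f\,d\nu .$$
By part (1), $f\mapsto\langle f,G_A\rangle_{\h_E}$ is a bounded functional on $\h_E$ (of norm $\le\|G_A\|_{\h_E}$), so it agrees with $f\mapsto\int_A f\,d\nu$ on the whole of $\h_E$; for the harmonic summand $\h arm_E$ of the Royden decomposition \eqref{eq Royden} one reruns the Green identity on the truncations, noting $\Delta G_A^{(N)}=c\,(\chi_A-P^{N+1}\chi_A)$ so that $\langle f,G_A^{(N)}\rangle_{\h_E}=\int_A f\,d\nu-\int_V f\,P^{N+1}(\chi_A)\,d\nu$ with the last term vanishing in the limit by transience. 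Substituting $f=G_B$ re-derives part (1).

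Density of $\mc G$ then follows: if $f\in\h_E$ is orthogonal to every $G_A$, then $\int_A f\,d\nu=0$ for all $A\in\Bfin(\mu)$; exhausting $V=\bigsqcup_k V_k$ with $\mu(V_k)<\infty$ and applying this to $A=V_k\cap\{f>0\}$ and $A=V_k\cap\{f<0\}$ forces $f=0$ $\nu$-a.e., hence $f$ is constant, i.e. $f=0$ in $\h_E$. Thus $\mc G^{\perp}=\{0\}$ and $\mc G$ is dense in $\h_E$.

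The main obstacle is part (2) for $f\in\h arm_E$: such $f$ need not lie in $L^2(\nu)$, so the $L^2(\nu)$-version $\langle f,g\rangle_{\h_E}=\langle (I-P)f,g\rangle_{L^2(\nu)}$ of the Green identity is unavailable, and one is forced through the bounded-functional argument plus the truncations, where the convergence $\int_V f\,P^{N+1}(\chi_A)\,d\nu\to 0$ must be justified from $0\le P^{N+1}(\chi_A)\le 1$ and $G_A^{(N)}\to G_A$ in $\h_E$. A secondary point is confirming that transience really gives $\sum_k\rho_k(A\times A)=\int_A G(\cdot,A)\,d\nu<\infty$, i.e. that $G_A$ is genuinely in $\h_E$, rather than merely that $G(\cdot,A)$ is a.e. finite.
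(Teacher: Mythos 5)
Your proposal is correct and, at bottom, rests on the same two ingredients as the paper's proof: the telescoping identity $\sum_{n}P^{n}(\chi_A)-\sum_{n}P^{n+1}(\chi_A)=\chi_A$ (equivalently $\Delta G_A=c\,\chi_A$) and the self-adjointness of $P$ in $L^2(\nu)$, which converts $\langle\chi_A,P^{k}(\chi_B)\rangle_{L^2(\nu)}$ into $\rho_k(A\times B)$. The packaging, however, is genuinely different. The paper computes $\langle f,G_A\rangle_{\h_E}$ and $\|G_A\|_{\h_E}^2$ by expanding the double integral $\iint(\cdot)\,d\rho$ directly, substituting the full Neumann series for $G_A$ and telescoping inside the integral; this treats all $f\in\h_E$ in one stroke but silently assumes absolute convergence at each interchange. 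You instead work with the truncations $G_A^{(N)}$, derive $\langle P^n\chi_A,P^m\chi_B\rangle_{\h_E}=\rho_{n+m}(A\times B)-\rho_{n+m+1}(A\times B)$ from the $L^2(\nu)$ Green identity, and pass to the limit; this buys a genuine Cauchy-sequence argument showing $G_A^{(N)}\to G_A$ in $\h_E$, and it isolates exactly the two analytic points the paper leaves implicit: (a) that transience as defined (a.e.\ finiteness of $G(\cdot,A)$) does not by itself give $\sum_n\rho_n(A\times A)=\int_AG(\cdot,A)\,d\nu<\infty$, i.e.\ that $G_A\in\h_E$ at all, and (b) that the reproducing identity for the harmonic summand of the Royden decomposition needs a separate justification. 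On (b), note that your intermediate assertion that boundedness of $f\mapsto\langle f,G_A\rangle_{\h_E}$ already gives the identity ``on the whole of $\h_E$'' is a slip — continuity extends the identity only over $\overline{\Dfin(\mu)}$, which is why you then (rightly) pass to $\h arm_E$ via the truncations; there the vanishing of $\int_Vf\,P^{N+1}(\chi_A)\,d\nu$ is exactly the delicate step (for harmonic $f\in L^2(\nu)$ it would equal $\int_Af\,d\nu$ and not vanish, so one must use that nonconstant $P$-harmonic functions in $\h_E$ lie outside $L^2(\nu)$), and the paper's own direct computation does not resolve it either. Finally, your formula $\|G_A\|^2_{\h_E}=\sum_{n=0}^{\infty}\rho_n(A\times A)$ agrees with \eqref{eq_inner prod G_A and G_B} at $B=A$ and with the paper's computation; the lower limit $n=1$ in \eqref{eq norm G_A} appears to be a typo, since $\rho_0(A\times A)=\nu(A)\neq0$.
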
 
 
 \begin{proof} (1) We prove \eqref{eq norm G_A} here.
 Relation \eqref{eq_inner prod G_A and G_B} is proved similarly.
One has
 $$
 \ba 
\| G_A(x) \|_{\h_E}^2 & = \iint_{\VtV}(G_A(x) - P_A(y))^2\; d\rho(x,y)\\
 & = \iint_{\VtV} G_A(x)  (G_A(x) - G_A(y)) \; d\rho(x,y)\\
  & = \iint_{\VtV} G_A(x)  (G_A(x) - P_A(y)) c(x) P(x, dy) d\mu(x))\\
  &= \int_{V} G_A(x)  [G_A(x) -  P(G_A)(x)] c(x) \; d\mu(x))\\
 &= \int_{V} G_A(x)  [\sum_{n=0}^\infty P^n(\chi_A)(x) -  
 \sum_{n=0}^\infty P^{n+1}(\chi_A)(x)   c(x) \; d\mu(x))\\
& = \int_V  \sum_{n=0}^\infty P^n(\chi_A)(x) \chi_A(x) \; d\nu(x)\\
& = \sum_{n=0}^\infty \langle\chi_A, P^n(\chi_A  \rangle_{L^2(\nu)}\\
& = \sum_{n=0}^\infty \rho_n(A\times A).  
 \ea
 $$
 
 For (2), 
$$
\ba 
\langle f, G_A\rangle_{\h_E} & = \frac{1}{2} \iint_{\VtV} 
(f(x) - f(y))(G_A(x) - G_A(y))\; d\rho(x,y)\\
&= \iint_{\VtV} (f(x) G_A(x) -  f(x)G_A(y))\; d\rho(x, y)\\
& = \int_V  \left[f(x) G_A(x) c(x)  - f(x) \left(\int_V G_A(y) P(x, dy)
\right)c(x)\right]\; d\mu(x)\\
&= \int_V  f(x) c(x) \left[\sum_{n=o}^\infty  P^n(\chi_A)(x) -
\sum_{n=o}^\infty  P^{n+1}(\chi_A)(x)\right] \; d\mu(x)\\
&= \int_V  f(x) \chi_A(x) c(x) \; d\mu(x)\\
& = \int_A f\; d\nu.
\ea
$$ 
It follows from the proved relation that if $\langle f, G_A\rangle_{\h_E}
 =0$ for all $A\in \Bfin(\mu)$, then  $f=0$, and $\mc G$ is dense in 
 $\h_E$.
 \end{proof}

Let $\Dfin(\mu) \subset L^2(\mu)$ denote, as usual,  the space spanned by
characteristic functions, and  let 
$\mc G$ be as in (\ref{eq def mc G}).  Then the following two operators,
$J$ and $K$, are densely defined 
\be\label{eqdef J and K}
J: \chi_A \mapsto \chi_A : \Dfin \to \h_E, \qquad 
K : G_A \mapsto c(I - P)(G_A) : \mc G \to L^2(\mu)
\ee
where $A\in \Bfin(\mu)$.

\begin{proposition}\label{prop J K symm pair}
The operators $J$ and $K$ form a symmetric pair, i.e., 
\be\label{eq_symm pair}
\langle J\va, f\rangle_{\h_E}  = \langle \va, K(f)\rangle_{L^2(\mu)} 
\ee
where $\va\in \Dfin$ and $f \in \mc G$.
\end{proposition}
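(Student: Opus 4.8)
The plan is to verify \eqref{eq_symm pair} by direct computation on the spanning sets, taking $\varphi = \chi_A$ with $A \in \Bfin(\mu)$ and $f = G_B$ with $B \in \Bfin(\mu)$, since both $\Dfin(\mu)$ and $\mc G$ are spanned by such elements and both sides of \eqref{eq_symm pair} are bilinear. First I would compute the right-hand side: by the definition of $K$ in \eqref{eqdef J and K} and Lemma \ref{lem c(I-P)G_A}, we have $K(G_B) = c(I-P)(G_B) = c\,\chi_B$, so
$$
\langle \chi_A, K(G_B)\rangle_{L^2(\mu)} = \int_V \chi_A(x)\, c(x)\, \chi_B(x)\; d\mu(x) = \int_{A\cap B} c\; d\mu = \nu(A\cap B).
$$

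Next I would compute the left-hand side, $\langle J\chi_A, G_B\rangle_{\h_E} = \langle \chi_A, G_B\rangle_{\h_E}$. Here the key tool is Theorem \ref{prop on G_A}(2), which states that for any $f \in \h_E$ and $A \in \Bfin(\mu)$ one has $\langle f, G_A\rangle_{\h_E} = \int_A f\; d\nu$. Applying this with $f = \chi_A$ (which lies in $\h_E$ by Remark \ref{rem Bfin in spaces}(3), since $c \in \Lloc$) and with the set $B$ in the role of the index set, we get
$$
\langle \chi_A, G_B\rangle_{\h_E} = \int_B \chi_A\; d\nu = \nu(A\cap B).
$$
Comparing the two displays gives \eqref{eq_symm pair} for $\varphi = \chi_A$, $f = G_B$, and then bilinearity and the definition of $\Dfin$ and $\mc G$ as spans extend the identity to all $\varphi \in \Dfin$ and $f \in \mc G$.

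I do not anticipate a serious obstacle here; the statement is essentially a repackaging of Theorem \ref{prop on G_A}(2) together with Lemma \ref{lem c(I-P)G_A}. The only point requiring a little care is making sure all the pairings are finite and well-defined: one needs $\chi_A \in \h_E$ (which holds since $\mu(A) < \infty$, by Theorem \ref{thm_stucture of energy space}(2) or Remark \ref{rem Bfin in spaces}(3)), one needs $G_B \in \h_E$ (which holds by Lemma \ref{lem h-norm for P^n} and transience, as used in Theorem \ref{prop on G_A}), and one needs $c\chi_B \in L^2(\mu)$ so that the right-hand pairing makes sense — but this follows from $\nu(A\cap B) \le \nu(B) < \infty$ and the fact that $\chi_A$ is bounded, so the integral is finite. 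If one wanted to be fully rigorous about the $L^2(\mu)$ pairing one could alternatively observe that $K(f) = c\chi_B \in L^1(\mu)$ and the pairing $\langle \chi_A, c\chi_B\rangle$ is really just the integral $\int_{A\cap B} c\,d\mu$, which is what the symmetric-pair formalism intends. Thus the proof is short: invoke Lemma \ref{lem c(I-P)G_A} for the right side, invoke Theorem \ref{prop on G_A}(2) for the left side, observe both equal $\nu(A\cap B)$, and pass to spans by bilinearity.
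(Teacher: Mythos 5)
Your proof is correct and follows the same overall strategy as the paper: verify \eqref{eq_symm pair} on the spanning elements $\chi_A$ and $G_B$ and show that both sides equal $\nu(A\cap B)$, using Lemma \ref{lem c(I-P)G_A} to identify $K(G_B)=c\chi_B$ on the $L^2(\mu)$ side. The only difference is that for the $\h_E$ side you invoke Theorem \ref{prop on G_A}(2) directly, whereas the paper re-derives that identity inline by expanding the energy inner product, using the symmetry of $\rho$, and telescoping the series $G_B=\sum_{n\geq 0} P^n(\chi_B)$; your shortcut is legitimate and arguably cleaner, since the cited theorem was already established and its proof contains exactly the computation the paper repeats.
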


\begin{proof}
To prove \eqref{eq_symm pair} it suffices to check that it holds for 
 $\va = \chi_A$ and $f = G_B$ where $A, B \in \Bfin(\mu)$. 
 For these functions, we show that the both inner products are equal to
 $\nu(A\cap B)$. 
 
By Lemma \ref{lem c(I-P)G_A}, we have
$$
\ba 
\langle \chi_A, K(G_B)\rangle_{L^2(\mu)}  & = \langle \chi_A, 
c \chi_B\rangle_{L^2(\mu)} \\
& = \int_V \chi_A c \chi_B  \; d\mu\\
& = \nu(A \cap B).
\ea
$$
On the other hand, for the same functions $\va$ and $f$, we compute 
the inner product in the finite energy Hilbert space using the symmetry 
of $\rho$: 
$$
\ba 
\langle J(\chi_A), G_B \rangle_{\h_E} & = \frac{1}{2} \iint_{\VtV}
(\chi_A(x) - \chi_A(y)) (G_B(x) -  G_B(y)) \; d\rho(x,y)\\
& = \iint_{\VtV} (\chi_A(x)  G_B (x) - \chi_A(x)  G_B (y)) \;
d\rho(x, y)\\
& = \iint_{\VtV} [\chi_A(x) \sum_{n=0}^\infty P^n(\chi_B)(x) \\
& \ \ \ \ \ \ - \chi_A(x) \sum_{n=0}^\infty P^n(\chi_B)(y) ]c(x) 
P(x, dy)d\mu(x)\\ 
& = \int_{V} [\chi_A(x) \sum_{n=0}^\infty P^n(\chi_B)(x) \\
& \ \ \ \ \ \ - \chi_A(x) \sum_{n=0}^\infty \int_V P^n(\chi_B)(y)P(x,dy)] 
c(x) d\mu(x)\\ 
& = \int_{V} [\chi_A(x) \sum_{n=0}^\infty P^n(\chi_B)(x) 
- \chi_A(x) \sum_{n=1}^\infty P^n(\chi_B)] \; d\nu(x)\\
& = \int_V \chi_A(x) \chi_B(x) \; d\nu(x)\\
& = \nu(A\cap B).
\ea
$$
 
\end{proof}

\begin{corollary} The finite energy Hilbert space admits the orthogonal 
decomposition
$$
\h_E = \overline{J(\Dfin(\mu))} \oplus \mc Harm.
$$
In particular, for every $B\in \Bfin(\mu)$, we have $G_B = G_1 \oplus 
G_2$, where $G_1\in \overline{J(\Dfin(\mu))}$ is always non-zero. 
\end{corollary}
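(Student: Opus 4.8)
The plan is to dispatch the two assertions separately; the first is essentially the Royden-type splitting already in hand, and the second is where the work lies. For the decomposition, recall from Theorem~\ref{thm_stucture of energy space}(1) (and the convention in \eqref{eqdef J and K} that $J\chi_A$ denotes $\chi_A$ regarded as an element of $\h_E$) that $J$ embeds $\Dfin(\mu)$ into $\h_E$; hence $\overline{J(\Dfin(\mu))}$ is a closed subspace and $\h_E=\overline{J(\Dfin(\mu))}\oplus\overline{J(\Dfin(\mu))}^{\perp}$. I would then identify this orthogonal complement with the space $\mc Harm=\h arm_E$ of finite-energy $P$-harmonic functions. For this, note that for $f\in\h_E$ one has $f-Pf\in L^2(\nu)$ (the inclusion $(I-P)\h_E\subset L^2(\nu)$ noted after Proposition~\ref{prop diagram commutes}), and a short computation using the symmetry of $\rho$ and the disintegration \eqref{eq disint formula} gives, for every $A\in\Bfin(\mu)$,
$$\langle J\chi_A,f\rangle_{\h_E}=\iint_{\VtV}\chi_A(x)(f(x)-f(y))\,d\rho(x,y)=\int_A(f-Pf)\,d\nu .$$
Thus $f\perp J(\Dfin(\mu))$ in $\h_E$ iff $f-Pf$ integrates to zero over every set of finite $\nu$-measure, i.e. iff $Pf=f$; this yields $\h_E=\overline{J(\Dfin(\mu))}\oplus\mc Harm$. (One may also read this off directly from Theorem~\ref{thm_stucture of energy space}(3).)

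For the second assertion, fix $B\in\Bfin(\mu)$ with $\mu(B)>0$ (if $\mu(B)=0$ then $G_B=0$ and there is nothing to prove). Since $\chi_B=J\chi_B\in\h_E$ and $G_B\in\h_E$, the decomposition gives $G_B=G_1\oplus G_2$ with $G_1\in\overline{J(\Dfin(\mu))}$ and $G_2\in\mc Harm$. Applying Theorem~\ref{prop on G_A}(2) with $f=\chi_B$ and $A=B$ --- equivalently, the computation in the proof of Proposition~\ref{prop J K symm pair} with $A=B$ --- one gets
$$\langle J\chi_B,G_B\rangle_{\h_E}=\int_B\chi_B\,d\nu=\nu(B)>0 ,$$
using that $\nu\sim\mu$. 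Since $J\chi_B\in\overline{J(\Dfin(\mu))}$ is orthogonal to $G_2\in\mc Harm$, we conclude $\langle J\chi_B,G_1\rangle_{\h_E}=\nu(B)>0$, and in particular $G_1\neq 0$.

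The only step calling for a little care is the formula for $\langle J\chi_A,f\rangle_{\h_E}$ when $f$ is an arbitrary element of $\h_E$ rather than one of the special functions ($\va=\chi_A$, $f=G_B$) for which Proposition~\ref{prop J K symm pair} is stated: one must know that $I-P$ maps $\h_E$ boundedly into $L^2(\nu)$ and that the energy inner product may legitimately be split into the two $\rho$-integrals above (both guaranteed by the results preceding this corollary, and by the square-integrability of $y\mapsto f(x)-f(y)$ against $\rho_x$ for a.e.\ $x$). Granting that, everything reduces to Theorem~\ref{thm_stucture of energy space}, Theorem~\ref{prop on G_A}, and Proposition~\ref{prop J K symm pair}, with no new estimate required.
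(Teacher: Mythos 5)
Your proof is correct and takes essentially the same route as the paper: the splitting is the Royden decomposition already recorded in Theorem \ref{thm_stucture of energy space}(3), and the non-vanishing of $G_1$ rests on $\langle \chi_B, G_B\rangle_{\h_E} = \nu(B) > 0$ from Theorem \ref{prop on G_A}(2) --- the paper phrases this as a contradiction (``if $G_1=0$ then $G_B\perp\overline{J(\Dfin(\mu))}$''), while you argue it directly, which is the same idea. Your explicit identification of $\overline{J(\Dfin(\mu))}^{\perp}$ with the $P$-harmonic functions via $\langle J\chi_A,f\rangle_{\h_E}=\int_A(f-Pf)\,d\nu$ is a sound extra verification that the paper leaves implicit.
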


\begin{proof}
Indeed, if one assumed that $G_1 = 0$, then we would have that 
$G_B$ is orthogonal to $\overline{J(\Dfin(\mu))}$. This contradicts
Theorem \ref{prop on G_A}. 
\end{proof}

We conclude this section with the following result that was proved in
\cite{BezuglyiJorgensen2018}:

\begin{theorem}
Let $(P_n)$ be a transient Markov process, and let $G(x, A)$ be the 
corresponding Green's function. Then, for any $f \in \h_E$, we have
the decomposition
$$
f = G(\va) + h
$$
where $h$ is a harmonic function and $\va \in L^2(\nu)$. 
\end{theorem}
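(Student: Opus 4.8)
The plan is to produce the two pieces by hand: put $\va:=(I-P)f$, and let $h$ be the harmonic component of $f$ under the Royden-type splitting of $\h_E$; the work is then to identify $G(\va)$ with the complementary piece. First, $\va:=(I-P)f$ automatically lies in $L^2(\nu)$ by the corollary to Proposition~\ref{prop diagram commutes} (the operator $I-P:\h_E\to L^2(\nu)$ is bounded, in fact contractive). Second, use the decomposition $\h_E=\overline{\Dfin(\mu)}\oplus\mathcal{H}arm_E$ of Theorem~\ref{thm_stucture of energy space} to write $f=g+h$ with $g$ in the $\h_E$-closure of $\Dfin(\mu)$ and $h$ harmonic; since $Ph=h$ one has $(I-P)g=(I-P)f=\va$, so it suffices to prove $G(\va)=g$, i.e.\ that the potential operator $G=\sum_{n\ge0}P^{n}$ is a left inverse of $I-P$ on $\overline{\Dfin(\mu)}$.

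I would establish this first for $s\in\Dfin(\mu)$. For $A\in\Bfin(\mu)$, each $P^{n}(\chi_A)$ lies in $\h_E$ by Lemma~\ref{lem h-norm for P^n}, and $(I-P)\chi_A\in L^2(\nu)$ by Proposition~\ref{lem P in L^1}(d); hence the partial sums of $G\big((I-P)\chi_A\big)=\sum_{n\ge0}P^{n}(I-P)\chi_A$ telescope inside $\h_E$ to $\chi_A-P^{N+1}(\chi_A)$. Lemma~\ref{lem h-norm for P^n} gives $\|P^{N}(\chi_A)\|_{\h_E}^{2}=\rho_{2N}(A\times A)-\rho_{2N+1}(A\times A)$, and transience together with Theorem~\ref{prop on G_A} yields $\sum_n\rho_n(A\times A)=\|G_A\|_{\h_E}^{2}<\infty$, so $\rho_{2N}(A\times A)\to0$ and $P^{N}(\chi_A)\to0$ in $\h_E$. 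Thus $G\big((I-P)\chi_A\big)=\chi_A$ in $\h_E$, and by linearity $G\big((I-P)s\big)=s$ for all $s\in\Dfin(\mu)$ (compatibly with Lemma~\ref{lem c(I-P)G_A}, which reads $(I-P)G_A=\chi_A$).

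To reach $g$, I would show that $G$, with domain $(I-P)\Dfin(\mu)\subset L^2(\nu)$, is closable as an operator from $L^2(\nu)$ into $\h_E$. Suppose $s_k\in\Dfin(\mu)$ with $(I-P)s_k\to0$ in $L^2(\nu)$ and $s_k=G\big((I-P)s_k\big)\to u$ in $\h_E$. Then $u\in\overline{\Dfin(\mu)}$, being an $\h_E$-limit of elements of $\Dfin(\mu)$; and since $I-P:\h_E\to L^2(\nu)$ is continuous, $(I-P)u=\lim_k(I-P)s_k=0$, so $u\in\mathcal{H}arm_E$. As $\overline{\Dfin(\mu)}$ and $\mathcal{H}arm_E$ are orthogonal, $u=0$, so $G$ is closable. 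Now pick $s_k\in\Dfin(\mu)$ with $s_k\to g$ in $\h_E$: then $(I-P)s_k\to\va$ in $L^2(\nu)$ and $G\big((I-P)s_k\big)=s_k\to g$ in $\h_E$, so $\va$ lies in the domain of the closure of $G$ and $G(\va)=g$. Therefore $h=f-G(\va)$ is harmonic and $f=G(\va)+h$ with $\va\in L^2(\nu)$, as asserted.

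The crux is the pair of limiting statements in the last two paragraphs. The potential operator $G=\sum_nP^{n}$ is in general unbounded from $L^2(\nu)$ to $\h_E$ (its symbol $(1-\lambda)^{-1}$ is unbounded near the top of the spectrum of $P$ on $L^2(\nu)$), so one cannot pass to limits naively: transience must be used essentially, through the finiteness of $\|G_A\|_{\h_E}$ and the energy decay $\|P^{N}\chi_A\|_{\h_E}\to0$, to get convergence of the potential series in $\h_E$; and it is precisely the orthogonality $\overline{\Dfin(\mu)}\perp\mathcal{H}arm_E$ that makes $G$ closable, allowing the identity $G((I-P)s)=s$ to extend from $\Dfin(\mu)$ to its $\h_E$-closure. (A minor point, already contained in Theorem~\ref{prop on G_A}, is the finiteness of $\int_A f\,d\nu$ for $f\in\h_E$, $A\in\Bfin(\mu)$, legitimizing the pairing $\langle f,G_A\rangle_{\h_E}=\int_A f\,d\nu$ used implicitly above.)
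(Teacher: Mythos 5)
The paper does not actually prove this theorem in the text: it is stated only with a citation to \cite{BezuglyiJorgensen2018}, so there is no in-house argument to measure yours against. Taken on its own terms, your proof is sound and is assembled entirely from results the paper does establish: you take the Royden splitting $f=g+h$ of Theorem \ref{thm_stucture of energy space}, set $\va=(I-P)f=(I-P)g\in L^2(\nu)$ using the contractivity of $I-P:\h_E\to L^2(\nu)$ from the corollary to Proposition \ref{prop diagram commutes}, prove $G\bigl((I-P)\chi_A\bigr)=\chi_A$ by telescoping together with the energy decay $\|P^{N}\chi_A\|^2_{\h_E}=\rho_{2N}(A\times A)-\rho_{2N+1}(A\times A)\to0$ supplied by Lemma \ref{lem h-norm for P^n} and Theorem \ref{prop on G_A}, and then pass from $\Dfin(\mu)$ to its $\h_E$-closure by a closability argument in which the orthogonality of $\overline{\Dfin(\mu)}$ and the harmonic subspace is exactly what makes the closure single-valued. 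This is the standard route in the discrete-network literature, and every ingredient you invoke is available in the paper.

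Two caveats are worth recording. First, what your construction delivers is $\overline{G}(\va)$, the value at $\va$ of the \emph{closure} of the densely defined operator $s\mapsto G\bigl((I-P)s\bigr)$; identifying this $\h_E$-limit with the pointwise Green potential $x\mapsto\int_V \va(y)\,G(x,dy)$ that the theorem's notation suggests would require an additional step (for instance positivity and monotone convergence, or extraction of an a.e.-convergent subsequence). Second, your decay step rests on $\sum_n\rho_n(A\times A)=\int_A G(x,A)\,d\nu(x)<\infty$, which is an integrability statement strictly stronger than the paper's definition of transience ($G(x,A)<\infty$ for $\mu$-a.e.\ $x$); you inherit this from Theorem \ref{prop on G_A} rather than introduce it, but the dependence should be made explicit. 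Neither point invalidates the argument; both are places where the statement as written is looser than what is actually proved.
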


\section{Discretization of the graph $\Bfin(\mu)$}
\label{sect discretization}
Let $(V, \B, \mu)$ be a $\sigma$-finite measure space, and let $\rho$ be
a symmetric measure on $\vv$.  We will  associate with 
$(V, \B, \mu)$ and $\rho$ a sequence of countably infinite graphs 
$\mc G_n$ equipped with conductance functions $c_n$ such that the 
weighted graphs $(\mc G_n, c_n)$ can be viewed as a discretization of
of the uncountable graph $\Bfin$ considered in
 \cite{BezuglyiJorgensen2018}.  

We first recall a few facts from \cite{BezuglyiJorgensen2018}. 

\begin{lemma}\label{lem A x A^c finite} Suppose that  $c(x) \in
 L^1_{\mathrm{loc}}(\mu)$. Then, for any set $A\in \Bfin$, 
\be\label{eq finite rho on Bfin}
 \rho(A \times A^c) < \infty
 \ee
 where $A^c = V \setminus A$. The converse is not true, in general. 
\end{lemma}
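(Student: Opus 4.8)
The \emph{plan} is to obtain \eqref{eq finite rho on Bfin} directly from the disintegration of $\rho$, and then to exhibit an explicit block-diagonal symmetric measure showing that finiteness of $\rho(A\times A^c)$ does not force $\int_A c\,d\mu<\infty$.

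For the inequality itself, I would apply the disintegration identity \eqref{eq disint formula} to the function $f(x,y)=\chi_A(x)\chi_{A^c}(y)$. This gives
$$
\rho(A\times A^c)=\int_V\chi_A(x)\Big(\int_V\chi_{A^c}(y)\,d\rho_x(y)\Big)\,d\mu(x)=\int_A\rho_x(A^c)\,d\mu(x).
$$
Since each $\rho_x$ is a positive measure, $\rho_x(A^c)\le\rho_x(V)=c(x)$ for $\mu$-a.e.\ $x$, and therefore $\rho(A\times A^c)\le\int_A c\,d\mu=\nu(A)$, which is finite because $c\in\Lloc$ and $A\in\Bfin$. This is the only content of the first assertion, and it simultaneously reproves the first inequality in \eqref{eq ||chi A||} of Theorem \ref{thm_stucture of energy space}.

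For the failure of the converse, I would take $V=(0,1)$ with Lebesgue measure $\mu$, fix a partition $(0,1)=\bigsqcup_{n\ge 0}I_n$ into intervals of positive length, choose weights $m_n>0$ with $\sum_{n\ \mathrm{even}}m_n=\infty$, and set
$$
\rho:=\sum_{n\ge 0}\frac{m_n}{\mu(I_n)^2}\,\big(\mu|_{I_n}\times\mu|_{I_n}\big).
$$
Then $\rho$ is symmetric, every fiber $E_x=I_n$ (for $x\in I_n$) is uncountable, and the marginal density is $c(x)=m_n/\mu(I_n)$ on $I_n$, so $\int_{I_n}c\,d\mu=m_n$. Taking $A=\bigcup_{n\ \mathrm{even}}I_n$, each plaquette $I_n\times I_n$ lies inside $(A\times A)\cup(A^c\times A^c)$, whence $\rho(A\times A^c)=0<\infty$, while $\int_A c\,d\mu=\sum_{n\ \mathrm{even}}m_n=\infty$ even though $\mu(A)\le 1$; in particular $c\notin\Lloc$. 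If an indecomposable witness is desired, one inserts between consecutive blocks a thin symmetric bridge measure of finite total $\rho$-mass $b_n$ with $\sum_n b_n<\infty$; this makes $\rho$ irreducible and changes $\rho(A\times A^c)$ only by the harmless finite amount $\sum_n b_n$, while leaving $\int_A c\,d\mu=\infty$.

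The forward inequality is entirely routine. The only genuine work is in the counterexample, and the point requiring care there is to arrange that $c$ truly fails to be locally integrable — i.e.\ non-integrable on some set of finite $\mu$-measure, here $A$ itself — while the cross-mass $\rho(A\times A^c)$ stays finite; the block structure makes this balance transparent, and the optional bridges are the only additional bookkeeping.
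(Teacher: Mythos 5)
Your forward argument is correct and is essentially the computation the paper itself relies on: disintegrating $\rho$ gives $\rho(A\times A^c)=\int_A\rho_x(A^c)\,d\mu\le\int_A c\,d\mu=\nu(A)<\infty$, which is exactly the inequality recorded in \eqref{eq ||chi A||}; the present paper only recalls Lemma \ref{lem A x A^c finite} from \cite{BezuglyiJorgensen2018} without reproving it, so there is nothing further to compare on that side.

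Your block-diagonal counterexample is also sound as far as it goes: one checks that $c=m_n/\mu(I_n)$ on $I_n$, that $\int_{I_n}c\,d\mu=m_n$, and that $\rho(A\times A^c)=0$ for $A=\bigcup_{n\ \mathrm{even}}I_n$ while $\int_A c\,d\mu=\infty$; the marginal of $\rho$ is equivalent to $\mu$, the fibers are uncountable, and the bridge modification preserves irreducibility while perturbing both $\rho(A\times A^c)$ and the local integrals of $c$ only by finite amounts. The one point you should make explicit is \emph{which} converse you are refuting. Your example shows that, for a \emph{fixed} $A\in\Bfin$, finiteness of $\rho(A\times A^c)$ does not force $\int_A c\,d\mu<\infty$. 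It does not show that the fully quantified converse fails, i.e.\ that one can have $\rho(B\times B^c)<\infty$ for \emph{every} $B\in\Bfin$ while $c\notin\Lloc$: in your construction a set $B$ with $\mu(B\cap I_n)=\tfrac12\mu(I_n)$ for all $n$ satisfies $\rho(B\times B^c)=\tfrac14\sum_n m_n=\infty$, so the hypothesis of that stronger converse already fails in your example. Since the lemma is phrased per set, your reading is the natural one, but a sentence identifying the implication being negated would remove the ambiguity.
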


We can view at the set $\Bfin = \Bfin(\mu)$ as an uncountable  graph 
$\mathcal G$ whose vertices are sets $A$ from $\Bfin$ and edges are defined as follows.
 For a symmetric measure $\rho$ defined on 
$(V \times V, \B\times B)$, we say that two sets $A$ and $B$ from
$\Bfin$ are connected by an edge $e$  if $\rho(A \times B) > 0$.  

This definition is extended to get finite paths in the graph $\mc G$. It is
said that there exists a finite path  in the graph $\mc G$ from $A$ to $B$
 if there exists a sequence $\{A_i  : i = 0, ..., n\}$ of sets  from $ \Bfin$ 
(vertices of  $\mc G$) such that $A_0 = A, A_n = B$ and 
$\rho(A_i \times A_{i+1}) > 0,  i = 0, ... n-1$. 

\begin{theorem}\label{prop connectedness} Let $(V, \B, \mu)$ be as 
above,  and let $\rho$ be a symmetric irreducible measure on 
$\vv$.  Then
 any two sets $A$ and $B$ from the graph $\mc G$ are connected by a finite  path, i.e., the graph $\mc G$ is connected. 
\end{theorem}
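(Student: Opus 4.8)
I would split the argument into two steps. Throughout we may assume $\mu(A)>0$ and $\mu(B)>0$, since in the mod $0$ convention a vertex of zero measure is identified with $\emptyset$ and carries no edges. Step 1: show that for some $n\ge 1$ one has $\rho_n(A\times B)>0$, where the measures $\rho_n$ are as in \eqref{eq_def rho_n}. Step 2: convert such an $n$ into an explicit finite path $A=A_0,A_1,\dots,A_n=B$ inside $\mc G$.

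For Step 1 I would use irreducibility of $\rho$ in the pointwise form provided by Lemma~\ref{lem-irr measure} (equivalently Definition~\ref{def_irreducible}): for $\mu$-a.e.\ $x\in V$ there is an $n=n(x)\ge 1$ with $\mu(A_n(x)\cap B)>0$, i.e.\ $\mc K^n(x,B)>0$. A short induction, based at $\mc K^1(x,\cdot)=\rho_x$, shows that $\mc K^n(x,B)>0$ if and only if $P^n(\chi_B)(x)>0$ (recall $c(x)=\rho_x(V)\in(0,\infty)$ $\mu$-a.e.). Hence the Borel sets $G_n:=\{x:P^n(\chi_B)(x)>0\}$ cover $V$ up to a $\mu$-null set, so since $\mu(A)>0$, countable additivity produces an $n$ with $\mu(A\cap G_n)>0$; as $\nu\sim\mu$ by Lemma~\ref{lem symm measure via int}, also $\nu(A\cap G_n)>0$, and therefore
\[
\rho_n(A\times B)=\langle\chi_A,P^n(\chi_B)\rangle_{L^2(\nu)}\ \ge\ \int_{A\cap G_n}P^n(\chi_B)\,d\nu\ >\ 0.
\]

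For Step 2, fix $n$ with $\rho_n(A\times B)>0$. By Theorem~\ref{prop from A to B}, $\rho_n(A\times B)=\lambda(X_0\in A,\ X_n\in B)$, where $\lambda$ is the path-space measure of \eqref{eq def lambda}. Using $\sigma$-finiteness of $\mu$, I would pick an exhaustion $V_k\uparrow V$ with $\mu(V_k)<\infty$; then the events $\{X_0\in A,\ X_1\in V_k,\dots,X_{n-1}\in V_k,\ X_n\in B\}$ increase in $k$ to $\{X_0\in A,\ X_n\in B\}$, so continuity of $\lambda$ from below yields a $k$ for which $\lambda(X_0\in A,\ X_1\in V_k,\dots,X_{n-1}\in V_k,\ X_n\in B)>0$. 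Put $A_0=A$, $A_i=V_k$ for $1\le i\le n-1$, and $A_n=B$; all of these belong to $\Bfin(\mu)$, hence are vertices of $\mc G$. Since this path event is contained in each two-coordinate event $\{X_{i-1}\in A_{i-1},\ X_i\in A_i\}$, and since $\lambda(X_{i-1}\in C,\ X_i\in D)=\rho(C\times D)$ for all $i$ (the two-dimensional marginals of $\lambda$ are all equal to $\rho$, because $\nu P=\nu$; this is the $n=1$ case of Theorem~\ref{prop from A to B} together with stationarity, equivalently the Corollary following it), we conclude $\rho(A_{i-1}\times A_i)>0$ for $i=1,\dots,n$. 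Thus $A_0,A_1,\dots,A_n$ is a finite path in $\mc G$ joining $A$ to $B$, so $\mc G$ is connected.

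The main obstacle is Step 1: upgrading the \emph{pointwise} irreducibility of the kernel $\mc K$, in which the number of steps $n(x)$ depends on the starting point, to a single $n$ with $\rho_n(A\times B)>0$ --- handled by the countable-additivity argument over the sets $G_n$. The other point requiring care is that the intermediate vertices of the path must have finite $\mu$-measure; this is precisely where $\sigma$-finiteness of $\mu$ enters, via the exhaustion $V_k\uparrow V$, and without it the construction would only yield a ``path'' through sets of possibly infinite measure, which are not vertices of $\mc G$.
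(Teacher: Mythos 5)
Your proof is correct, but it takes a genuinely different route from the paper's. The paper fixes a countable partition $\xi=(C_i)$ of $V$ into sets of positive finite $\mu$-measure with $A$ and $B$ among its elements, and builds the path greedily: at each stage it uses $\rho(A_i\times A_i^c)>0$ (a consequence of indecomposability of the irreducible measure, via Lemma~\ref{lem A x A^c finite} and the lemma on decomposable kernels) to find the first block of partition elements $C_j$ that $A_i$ ``sees'', and takes $A_{i+1}$ to be the union of that block; the resulting path consists of pairwise disjoint sets, and termination is argued from the fact that $B$ is an element of $\xi$. You instead work through the $n$-step machinery: irreducibility in its kernel form (Lemma~\ref{lem-irr measure}) plus countable additivity over the sets $G_n=\{P^n(\chi_B)>0\}$ yields a single $n$ with $\rho_n(A\times B)>0$, and then the path-space measure $\lambda$ together with an exhaustion $V_k\uparrow V$ and continuity from below lets you truncate the intermediate coordinates to a finite-measure set, reading off the path $A, V_k,\dots,V_k, B$ from the identity $\lambda(X_{i-1}\in C,\ X_i\in D)=\rho(C\times D)$ (which uses $\nu P=\nu$). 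Your version makes the role of irreducibility completely transparent --- it is precisely the statement that some power of the kernel connects $A$ to $B$ --- and termination is automatic since the path has length exactly $n$; it also isolates cleanly where $\sigma$-finiteness of $\mu$ enters (the intermediate vertices must lie in $\Bfin(\mu)$). The paper's construction is more elementary, avoiding the path space and the measures $\rho_n$ altogether, and produces disjoint path vertices, though its termination step is argued more briefly. The only points in your write-up that deserve a word more care are the induction showing $\mc K^n(x,B)>0\Leftrightarrow P^n(\chi_B)(x)>0$ $\mu$-a.e.\ (the exceptional null set must be shown not to be charged by $\rho_x$ for a.e.\ $x$, which follows from $\rho(N\times V)=\int_N c\,d\mu=0$ and symmetry), and the mod~$0$ convention for null vertices; neither affects the validity of the argument.
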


\begin{proof}
We will show that there exists a finite sequence $(A_i : 0 \leq i \leq n)$ of
 disjoint subsets from $\Bfin$ such
 that $A_0 = A$,  $\rho(A_i \times A_{i+1}) > 0$, and $\rho(A_n \times
B) >0$, $i = 0, ..., n-1$.

 If $\rho(A \times B) > 0$, then nothing to prove, so that we can
assume that $\rho(A \times B) = 0$.

 Let $\xi =(C_i : i \in \N)$ be a partition of $V$ into disjoint subsets of
positive finite measure such that
$C_i \in \Bfin$ for all $i$. Without loss of generality, we can assume that 
the sets $A$ and $B$ are included in $\xi$. Let for definiteness, 
$A = C_0$. 

Since $\rho(A \times A^c) > 0$ (by Lemma \ref{lem A x A^c finite}),
there exists a set $C_{i_1}\in \xi$ such that
$\rho(A \times C_{i_1}) > 0$ and $\rho(A \times C_{j}) = 0$ for
all $0 <j < i_1$. Set
$$
A_1 := \bigcup_{0 < j  \leq i_1} C_j.
$$
It is clear that $A_1 \in \Bfin$ and $\rho(A_0 \times A_1) > 0$.
If $\rho(A_1 \times B) > 0$, then we are done. If not, we proceed as
follows.
Because of the property $\rho(A_1 \times A_1^c)> 0$,
there exists some $i_2 > i_1$ such that $\rho(A_1 \times C_{i_2})
 > 0$ and $\rho(A_1 \times C_{j}) = 0$ for
all $i_1 <j < i_2$. Set
$$
A_2 := \bigcup_{ i_1 < j \leq i_2} C_j.
$$
Then $\rho(A_1 \times A_2)> 0$, and we check whether $\rho(A_2 \times
 B) > 0$. If not, we continue in the same manner by constructing
  consequently disjoint sets $A_i$ satisfying the property 
  $\rho(A_i \times A_{i+1}) > 0$.
Since $B$ is an element of $\xi$, this process will terminate. This means
that  there exists some
$n$ such that $A_n \supset B$. This argument proves the proposition.

\end{proof}

Given a $\sigma$-finite measure space $\sms$, consider  a sequence of 
measurable partition $\{\xi_n\}_{n\in \N}$ such that

(i) $\xi_n = (A_n(i) : i \in \N), \ \bigsqcup_i A_n(i) = V, \ A_n(i) \in 
\Bfin(\mu)$;

(ii)  $\xi_{n+1}$ refines $\xi_n$, i.e., every element $A_n(i)$ of the 
partition $\xi_n$ is the union of some elements of $\xi_{n+1}$:
$A_n(i) = \bigcup_{j \in \Lambda_n(i)} A_{n+1}(j)$ where 
$\Lambda_n(i)$ is a finite of $\N$;

(iii) the set $\{A_n(i) : i \in \N, n\in \N\}$ generates the Borel
$\sigma$-algebra $\B$.

If for every $i$, the cardinality of the set $\Lambda_i$ is bigger than one,
we say that $\xi_{n+1} $ refines $\xi_n$ strictly.

It is well known, see e.g. \cite{Kechris1995}, that, for any point $x\in V$,
there exists a sequence $i_n(x)$ such that $A_{n+1}(i_{n+1}(x))
 \subset  A_n((i_n)(x))$ and 
\be\label{eq_A_n shrinks to x} 
\{x\} = \bigcap_{n \in \N} A_n(i_n(x))
\ee

Suppose $\rho$ is a symmetric measure on $\vv$. We define a sequence
 of non-negative Borel functions $c^{(n)}$ on $\vv$ by setting 
$$
c^{(n)}_{xy} := \rho(A_n(i_n(x)) \times A_n(i_n(y)))
$$
 for any $x, y$ from $V$. Clearly, $c^{(n)}_{xy}$ is a piecewise constant
 function.
 
 \begin{lemma}\label{lem decreasing c^n} 
 For a given sequence of strictly refining partitions 
 $(\xi_n)$, the sequence of functions $(c^{(n)}_{xy})$ is monotone 
 decreasing. 
 \end{lemma}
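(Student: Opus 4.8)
The plan is to reduce the statement to the monotonicity of $\rho$ under set inclusion, after observing that the two rectangles defining $c^{(n)}_{xy}$ can only shrink as $n$ grows. First I would fix an arbitrary pair $x, y \in V$ and unwind the notation: the index $i_n(x)$ is chosen precisely so that $\big(A_n(i_n(x))\big)_{n\in\N}$ is the decreasing chain of atoms with $\bigcap_n A_n(i_n(x)) = \{x\}$, cf. \eqref{eq_A_n shrinks to x}; in particular property (ii) of the refining partitions gives $A_{n+1}(i_{n+1}(x)) \subseteq A_n(i_n(x))$, and likewise $A_{n+1}(i_{n+1}(y)) \subseteq A_n(i_n(y))$.

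Second, I would take the product of these two inclusions, namely
$$
A_{n+1}(i_{n+1}(x)) \times A_{n+1}(i_{n+1}(y)) \ \subseteq\ A_n(i_n(x)) \times A_n(i_n(y))
$$
as Borel subsets of $\VtV$, and then apply monotonicity of the measure $\rho$ on $(\VtV, \B\times\B)$ to obtain
$$
c^{(n+1)}_{xy} = \rho\big(A_{n+1}(i_{n+1}(x)) \times A_{n+1}(i_{n+1}(y))\big) \ \le\ \rho\big(A_n(i_n(x)) \times A_n(i_n(y))\big) = c^{(n)}_{xy}.
$$
Since $x, y$ were arbitrary, this is exactly the asserted monotone decrease of the sequence of functions $(c^{(n)}_{xy})$. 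No finiteness is needed for the inequality to be meaningful: even if $c \notin \Lloc$ and some of the values $\rho\big(A_n(i_n(x))\times A_n(i_n(y))\big)$ equal $+\infty$, the inequality holds in $[0,+\infty]$; under the standing hypothesis $c \in \Lloc$ all these values are in fact finite, since $\rho\big(A_n(i_n(x))\times A_n(i_n(y))\big)\le \nu\big(A_n(i_n(x))\big)<\infty$ (compare Lemma \ref{lem A x A^c finite}).

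The argument has essentially no obstacle; the only point that warrants a line of care is the bookkeeping of the indices $i_n(x)$, i.e. making explicit that the nested choice recorded in \eqref{eq_A_n shrinks to x} is exactly what supplies the inclusion $A_{n+1}(i_{n+1}(x)) \subseteq A_n(i_n(x))$ used above. I would also remark that the hypothesis that $(\xi_n)$ refines its predecessor \emph{strictly} plays no role in this (weak) monotonicity — it would only matter if one wished to locate where the inequality is strict — so the lemma holds verbatim for any refining sequence of partitions.
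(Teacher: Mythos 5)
Your proof is correct and follows essentially the same route as the paper's: nested atoms from the refining partitions give nested rectangles in $\VtV$, and monotonicity of the measure $\rho$ yields $c^{(n+1)}_{xy}\le c^{(n)}_{xy}$. The only difference is that the paper invokes the strict refinement to write a strict inequality $<$ (which strict set inclusion alone does not actually guarantee without positivity of $\rho$ on the difference), whereas you prove the non-strict version, which is all the lemma's conclusion requires and is the fully justified statement.
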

 
 \begin{proof}
 The proof is straightforward. For $x, y \in V$, let the sequences 
 $(A_n(i_n(x)))$ and  $(A_n(j_n(y)))$ shrink to the points $x$ and $y$,
 respectively, according to \eqref{eq_A_n shrinks to x}. By assumption
  of the lemma, $A_{n+1}(i_{n+1}(x)) $ is a proper subset of
  $A_n(i_n(x))$. Hence,
  $$
  \ba 
  c_{xy}^{(n+1)} & = \rho(A_{n+1}(i_{n+1}(x))  \times 
  A_{n+1}(j_{n+1}(y)) \\
  & < \rho(A_{n}(i_{n}(x))  \times A_{n}(j_{n}(y))) \\
&  = c^{(n)}_{xy}.
  \ea
  $$
 \end{proof}
 
 We now can define a sequence of discrete graphs (weighted networks)
 $G_n =  (V_n, E_n, w_n) $. The vertex set
 $V_n$ is formed by the atoms of the partition $\xi_n$, i.e., by the
 sets $\{A_n(i) : i \in \N_0\}$; therefore $V_n$ can be identified
 with $\N_0$. The set of edges $E_n$ consists of pairs $(i, j)$ such 
 that 
 $$
(i, j) \in E_n  \ \Longleftrightarrow \ \rho(A_{n}(i)  \times A_{n}(j)) > 0.
 $$
The weight function is $w_n(i, j) = \rho(A_{n}(i)  \times A_{n}(j))$. 
 
\begin{lemma}
Let $\rho$ be a symmetric irreducible measures on $\vv$. Then the
weighted graph $G_n$ is connected for every $n$.   
\end{lemma}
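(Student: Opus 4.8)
The plan is to deduce this from irreducibility of $\rho$ by a short contradiction argument, essentially the discrete shadow of Theorem~\ref{prop connectedness}. Throughout I assume, as is implicit in the construction of the graphs $G_n$, that every atom $A_n(i)$ of the partition $\xi_n$ has strictly positive (and finite) measure; a null atom would automatically be an isolated vertex of $G_n$ and should simply be omitted from the construction.

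First I would suppose, for contradiction, that $G_n=(V_n,E_n,w_n)$ is not connected. Then its vertex set decomposes into at least two nonempty connected components; fix one of them, indexed by a set $\mathcal C$ of atoms with $\mathcal C\neq\emptyset$ and $V_n\setminus\mathcal C\neq\emptyset$, and put
\[
A:=\bigcup_{i\in\mathcal C}A_n(i),\qquad A^c=V\setminus A=\bigcup_{i\notin\mathcal C}A_n(i),
\]
using that $\xi_n$ is a partition of $V$. Since both index sets are nonempty and all atoms have positive measure, $\mu(A)>0$ and $\mu(A^c)>0$. Next, for $i\in\mathcal C$ and $j\notin\mathcal C$ the value $\rho(A_n(i)\times A_n(j))$ must vanish: otherwise $(i,j)\in E_n$, which would place $j$ in the same component as $i$, contradicting the choice of $\mathcal C$. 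As there are only countably many atoms, countable additivity yields
\[
\rho(A\times A^c)=\sum_{i\in\mathcal C}\sum_{j\notin\mathcal C}\rho\big(A_n(i)\times A_n(j)\big)=0,
\]
and by symmetry of $\rho$ also $\rho(A^c\times A)=0$.

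Hence $\rho\big((A\times A^c)\cup(A^c\times A)\big)=0$, so the support $E=E(\rho)$, which is only defined modulo $\rho$-null sets, satisfies $E\subset(A\times A)\cup(A^c\times A^c)$ mod $0$. This is exactly the statement that $\rho$ is $\mu$-decomposable in the sense of Definition~\ref{def_irreducible}(3), with both $A$ and $A^c$ of positive measure; by the Lemma immediately following Definition~\ref{def_irreducible}, such a measure is $\mu$-reducible, contradicting the hypothesis that $\rho$ is irreducible. Therefore $G_n$ is connected. I do not anticipate a real obstacle here: the only point needing care is the measure-zero bookkeeping — verifying that the two pieces $A$ and $A^c$ genuinely carry positive measure and that the inclusion for $E$ is read in the mod-$0$ sense required by Definition~\ref{def_irreducible}(3). (Alternatively, one could project a path in the uncountable graph $\mathcal G$ of Theorem~\ref{prop connectedness} down to $\xi_n$-atoms, using that $\rho(C\times D)>0$ with $C=\bigsqcup_k(C\cap A_n(k))$ forces $\rho(A_n(k)\times A_n(\ell))>0$ for some atoms $A_n(k)\subset C$ and $A_n(\ell)\subset D$.)
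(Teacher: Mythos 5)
Your argument is correct, and it is worth noting that the paper itself states this lemma without proof, so there is nothing to compare it against directly. Your route — assume a disconnected $G_n$, aggregate one connected component into $A=\bigcup_{i\in\mathcal C}A_n(i)$, use countable additivity to get $\rho(A\times A^c)=0$, and conclude that $\rho$ is $\mu$-decomposable and hence reducible by the lemma following Definition~\ref{def_irreducible} — is sound, and your two points of care (that atoms of $\xi_n$ must carry positive measure, and that the inclusion $E\subset(A\times A)\cup(A^c\times A^c)$ is read mod $0$) are exactly the right ones to flag. This is a genuinely different flavor of argument from what the paper does for the analogous continuous statement, Theorem~\ref{prop connectedness}, where connectedness of the uncountable graph $\mathcal G$ on $\Bfin(\mu)$ is established by an explicit constructive path-building procedure (repeatedly using $\rho(A_k\times A_k^c)>0$ from Lemma~\ref{lem A x A^c finite} to extend a path until it reaches $B$). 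The contradiction-via-decomposability argument is shorter and cleaner for the discrete graph $G_n$ precisely because the vertex set is a countable partition of $V$, so a connected component automatically produces a measurable splitting $V=A\sqcup A^c$; the constructive approach of Theorem~\ref{prop connectedness} buys an explicit path and works even when no such canonical partition is available. Your parenthetical alternative — projecting a $\mathcal G$-path down to $\xi_n$-atoms — would also work and is closer in spirit to the paper's method.
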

 
 It follows from Lemma \ref{lem decreasing c^n} that 
 $$
 c_{xy} = \lim_{n\to \infty} c^{(n)}_{xy}
 $$
exists and is a Borel positive function. Since the measure $\rho$ is 
symmetric, we conclude that $c_{xy} = c_{yx}$.

Next, we define 
$$
c^{(n)}(x) = \sum_{j} \rho(A_{n}(i_{n}(x))  \times A_{n}(j)) 
= \sum_{y \sim_n x} c^{(n)}_{xy}
$$ 
 where $x \sim_n y $ if and only if $c^{(n)}_{xy} > 0$. It can be seen
 that
 \be\label{eq_c^n via V}
 c^{(n)}(x) = \rho(A_{n}(i_{n}(x))  \times V). 
 \ee

\begin{lemma}
The sequence $(c^{(n)}(x))$ is monotone decreasing for every $x \in V$
and 
$$
c(x) := \lim_{n\to \infty} c^{(n)}(x) = \rho_x(V). 
$$
\end{lemma}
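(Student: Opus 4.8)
The plan is to prove the two assertions separately: monotonicity will come directly from the nesting of the partition atoms, and the value of the limit from continuity of $\rho$ from above together with the disintegration of $\rho$.

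First I would rewrite $c^{(n)}$ by means of \eqref{eq_c^n via V}, so that $c^{(n)}$ is the piecewise constant function taking the value $\rho\left(A_n(i)\times V\right)$ on the atom $A_n(i)$ of $\xi_n$; in particular $c^{(n)}(x)=\rho\left(A_n(i_n(x))\times V\right)$. Since $\xi_{n+1}$ refines $\xi_n$, the atom of $\xi_{n+1}$ containing $x$ is contained in the atom of $\xi_n$ containing $x$, i.e. $A_{n+1}(i_{n+1}(x))\subseteq A_n(i_n(x))$, whence, by monotonicity of $\rho$ in its first coordinate,
$$
c^{(n+1)}(x)=\rho\left(A_{n+1}(i_{n+1}(x))\times V\right)\le\rho\left(A_n(i_n(x))\times V\right)=c^{(n)}(x).
$$
This reproduces the argument of Lemma \ref{lem decreasing c^n} with $A_n(j_n(y))$ replaced by $V$, and, being a non-increasing sequence in $[0,\infty)$, $\left(c^{(n)}(x)\right)_n$ has a limit; call it $c(x)$.

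To evaluate $c(x)$ I would first observe, using the disintegration formula \eqref{eq disint formula} with $f=\chi_{A\times V}$, that $\rho(A\times V)=\int_A \rho_y(V)\,d\mu(y)=\int_A c\,d\mu$, which is finite whenever $A\in\Bfin(\mu)$ and $c\in\Lloc$ (the standing hypothesis of this section). Hence $\left(A_n(i_n(x))\times V\right)_n$ is a decreasing sequence of sets of finite $\rho$-measure, and continuity of $\rho$ from above, combined with the shrinking property \eqref{eq_A_n shrinks to x}, gives
$$
c(x)=\lim_{n\to\infty}\rho\left(A_n(i_n(x))\times V\right)=\rho\left(\{x\}\times V\right).
$$
It then remains to identify the right-hand side with the fiber mass $\rho_x(V)$; the idea is that the portion of $\rho$ sitting over the vertical fiber through $x$ is governed by $\wt\rho_x=\delta_x\times\rho_x$ from the disintegration $\rho=\int_V\wt\rho_y\,d\mu(y)$ of Theorem \ref{thm Simmons}, and $\wt\rho_x$ has total mass $\rho_x(V)=c(x)$ by Assumption 1.

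The step I expect to be the main obstacle is precisely this last identification $\rho(\{x\}\times V)=\rho_x(V)$: it is the only place where the fibered structure of $\rho$ is genuinely used, and it has to be handled with care, since it asks that the nested cylinders $A_n(i_n(x))\times V$ actually localize the disintegration to the fiber over $x$. This is where the full hypotheses on $(\xi_n)$ recorded in \eqref{eq_A_n shrinks to x} — the atoms collapse to points and $\{A_n(i)\}$ generates $\B$ — are needed, and the argument is in effect a martingale/differentiation statement for the refining filtration $\left(\sigma(\xi_n)\right)_n$. By contrast, the monotonicity half of the lemma is purely formal.
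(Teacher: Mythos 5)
Your monotonicity argument is fine and coincides with the paper's: the nesting $A_{n+1}(i_{n+1}(x))\subseteq A_n(i_n(x))$ together with \eqref{eq_c^n via V} gives $c^{(n+1)}(x)\le c^{(n)}(x)$, so the limit exists. The problem is the second half, and you have in fact put your finger on it yourself. Your use of continuity from above is correct: for $A\in\Bfin(\mu)$ and $c\in\Lloc$ one has $\rho(A\times V)=\int_A c\,d\mu=\nu(A)<\infty$, so the decreasing sets $A_n(i_n(x))\times V$ give
$$
\lim_{n\to\infty}c^{(n)}(x)=\rho\Bigl(\bigl(\textstyle\bigcap_n A_n(i_n(x))\bigr)\times V\Bigr)=\rho(\{x\}\times V).
$$
But the identification $\rho(\{x\}\times V)=\rho_x(V)$ you propose for the last step is false. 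From the disintegration (or directly from $c^{(n)}(x)=\nu(A_n(i_n(x)))$ and Lemma \ref{lem symm measure via int}(2)) one gets $\rho(\{x\}\times V)=\nu(\{x\})=c(x)\,\mu(\{x\})=0$, since $\mu$ is non-atomic by the paper's standing assumptions. So your (correct) computation shows the limit is $0$, not $\rho_x(V)$: the ``main obstacle'' you flag is not a delicate step to be handled with care, it is a step that cannot be carried out. The quantity that does converge to $c(x)=\rho_x(V)$ for $\mu$-a.e.\ $x$ is the normalized ratio $\nu(A_n(i_n(x)))/\mu(A_n(i_n(x)))$ --- that is the martingale/differentiation statement you allude to, and it requires dividing by $\mu(A_n(i_n(x)))$, which $c^{(n)}(x)$ does not do.

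For comparison, the paper's own proof of this step is the single assertion that $\bigcap_n A_n(i_n(x))=\{x\}$ implies $c(x)=\rho_x(V)$, with no further justification; it makes exactly the leap you could not close (and the preceding Lemma \ref{lem decreasing c^n} on $c^{(n)}_{xy}$ has the same difficulty). So the outcome of your attempt is not that you missed the paper's argument, but that your more careful route exposes that the lemma, as stated without a normalization by $\mu(A_n(i_n(x)))$, does not hold for non-atomic $\mu$.
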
 

\begin{proof}
Indeed, we see from \eqref{eq_c^n via V} that 
$$
c^{(n+1)}(x) = \rho(A_{n+1}(i_{n+1}(x))  \times V) 
< \rho(A_{n}(i_{n}(x))  \times V) =  c^{(n)}(x).
$$
Hence, the Borel function $c(x)$ is well defined for every $x$. 
Because $\bigcap_n A_{n}(i_{n}(x)) =\{x\}$, we obtain that 
$c(x) = \rho_x(V)$. 
\end{proof}

\textbf{Acknowledgments.} The authors are pleased to thank colleagues and
 collaborators, especially members of the seminars in Mathematical Physics 
 and Operator Theory at the University of Iowa, where versions of this work 
 have been presented. We acknowledge very helpful conversations with 
 among others Professors Paul Muhly, Wayne Polyzou; and conversations at 
 distance with Professors Daniel Alpay, and his colleagues at both Ben Gurion 
 University, and Chapman University. 

\bibliographystyle{alpha}
\bibliography{references2Markov}

\end{document}